\def\Bbb{\mathbb}
\def\dist{{\rm{dist}}}
\def\area{{\rm{area}}}
\def\dist{{\rm{dist}}}
\def\diam{{\rm{diam}}}
\def\Bbb{\mathbb}
\def\reals{\Bbb R}
\def\complex{\Bbb C}
\def\disk{\Bbb D}
\def\circle{\Bbb T}
\def\integers{\Bbb Z}
\def\rhp{{\Bbb H}_r}
\def\uhp{{\Bbb H}_u}
\def\classS{\cal S}
\def\class2{{\cal S}_2,0}
\def\class21{{\cal S}^*}
\def\classB{\cal B}
\def\disk{\Bbb D}
\def\cal{\mathcal}
\def\Julia{{\cal J}}
\def\zbar{{\overline{z}}}
\theoremstyle{plain}                    
\newtheorem{thm}{Theorem}[section]
\newtheorem{cor}[thm]{Corollary}
\newtheorem{lemma}[thm]{Lemma}
\newcounter{ques}
\numberwithin{equation}{section}
\begin{document}
\baselineskip=18pt


\title [    Models for the Speiser class ]
          {  Models for the Speiser class}

\subjclass{Primary: 30D15  Secondary: 30C62, 37F10 }
\keywords{Eremenko-Lyubich class, Speiser class,  entire functions,
quasiconformal maps, quasiconformal  folding, conformal modulus}

\author {Christopher J. Bishop}
\address{C.J. Bishop\\
         Mathematics Department\\
         Stony Brook University \\
         Stony Brook, NY 11794-3651}
\email {bishop@math.sunysb.edu}
\thanks{The  author is partially supported by NSF Grant DMS 16-08577.
        }

\date{January 2017}
\maketitle


\begin{abstract}
The Eremenko-Lyubich  class $\classB$
consists of  transcendental entire functions 
with bounded singular set
and the Speiser class $\classS \subset \classB$   is made up 
of functions with a finite singular set.
 In  \cite{Bishop-EL-models}  I gave 
a method for constructing Eremenko-Lyubich functions 
that approximate certain simpler functions  called models.
 In this paper, I show that all models can be approximated
in a weaker sense by Speiser class functions, and  
that the stronger approximation of \cite{Bishop-EL-models}
can fail  for the Speiser class.
In particular, I give geometric restrictions on 
the geometry of a Speiser class function that need
not be satisfied by general Eremenko-Lyubich functions.
\end{abstract}

\clearpage


\setcounter{page}{1}
\renewcommand{\thepage}{\arabic{page}}
\section{Introduction} \label{Intro} 

If $f$ is an  entire function, we say $f$ is transcendental 
if it is not a polynomial. 
The singular set of an entire function $f$ is the closure
of its finite critical values and finite asymptotic values, and
will be denoted $S(f)$.  The Eremenko-Lyubich class
$\classB$ consists of transcendental entire
 functions such that $S(f)$ is a bounded set.
 The Speiser class $\classS \subset \classB$
consists of  functions for which $S(f)$ is a finite set.
We let $\classS_{n,k} \subset \classS$ denote the 
sub-collection of functions with at most $n$ finite critical values
and $k$ finite  asymptotic values. In this paper, we 
will be particularly concerned with $\classS_{2,0}$. 

The Eremenko-Lyubich and Speiser
 classes are important in the study of transcendental
dynamics 
and it is known that the dynamical behavior in the   Speiser
class is more restricted than in the Eremenko-Lyubich class.
 For example, a Speiser class
 function cannot have a wandering domain
(proved by Eremenko and Lyubich  in
\cite{MR1196102},  and  Goldberg and Keen in \cite{MR857196})
whereas an Eremenko-Lyubich function can have a
wandering domain \cite{Bishop-classS}.
On the other hand, various types of pathological
behavior, such as a  Julia set  with no 
non-trivial path components
can be constructed in either class (see 
 \cite{Bishop-classS} and \cite{MR2753600}).

In this paper we prove an approximation  theorem 
involving the Speiser class that is analogous to 
a result proven for the Eremenko-Lyubich class in 
\cite{Bishop-EL-models}.  
However, the function we construct here
 fails to satisfy some of  the side 
conditions that  could be imposed in 
\cite{Bishop-EL-models}. Comparing the two 
results helps illustrate 
the  differences between the two classes of functions. 
To state our results precisely, we need to introduce some 
notation.

Suppose $\Omega = \bigcup_j \Omega_j $ is a disjoint union
of unbounded simply connected domains 
so that sequences of components of $\Omega$ accumulate only
     at infinity.
 Also suppose  there exists  a map $\tau : \Omega \to \rhp + \rho_0= 
\{ x+iy: x>\rho_0\}$ that  is holomorphic   and such  that
\begin{enumerate}
\item the restriction of $\tau$ to each
   $\Omega_j$  is a conformal map $\tau_j:\Omega_j \to\rhp+\rho_0$, and
\item if $\{ z_n \} \subset \Omega$ and
        $\tau(z_n) \to \infty$ then $z_n \to \infty$ .
\end{enumerate}

An open set $\Omega$ as above will be called a model domain and 
$F = e^\tau$ will be called a model function. 
Note that $F: \Omega \to \{z:|z|>e^{\rho_0}\}$  is a 
covering map. 
A choice of both a model domain $\Omega$
and a model function $F$ on $\Omega$ will be called
a model.
If $\rho_0=0$ we say the model is normalized; this is 
the main case we will consider. 

We call the 
connected components, $\{ \Omega_j\}$, of  a model domain 
$\Omega$ the  tracts of $\Omega$.
In many cases of interest, the tracts will be Jordan domains
on the Riemann sphere with the point $\infty$ on the boundary.
The number of tracts can be either finite or infinite.
(Usually a domain  refers to an open connected set, so 
using  ``model domain''  for regions that 
 may have several connected components  might be confusing.  
We  are using the phrase to abbreviate
``the domain of definition of  the model function'' rather 
than invent a new term for this --
 terrain, territory, archipelago, \dots.
Except for this usage, the term domain  will retain its usual meaning).

Given a normalized model $(\Omega, F)$  we let
$$ \Omega(\rho) = \{ z \in \Omega : |F(z)| > e^\rho\} = 
   \tau^{-1}(\{ x+iy: x > \rho\}),$$
and
$$ \Omega(\rho,\delta) = \{ z \in  \Omega : 
 e^\rho < |F(z)| <  e^\delta \} = 
   \tau^{-1}(\{ x+iy:  \rho <  x  < \delta \}). 
$$
Given a tract $\Omega_j$ of $\Omega $,  we
let $\Omega_j(\rho) = \Omega(\rho) \cap \Omega_j  $
 and similarly for $\Omega_j( \rho, \delta )$.

\begin{figure}[htb]
\centerline{
\includegraphics[height=3.5in]{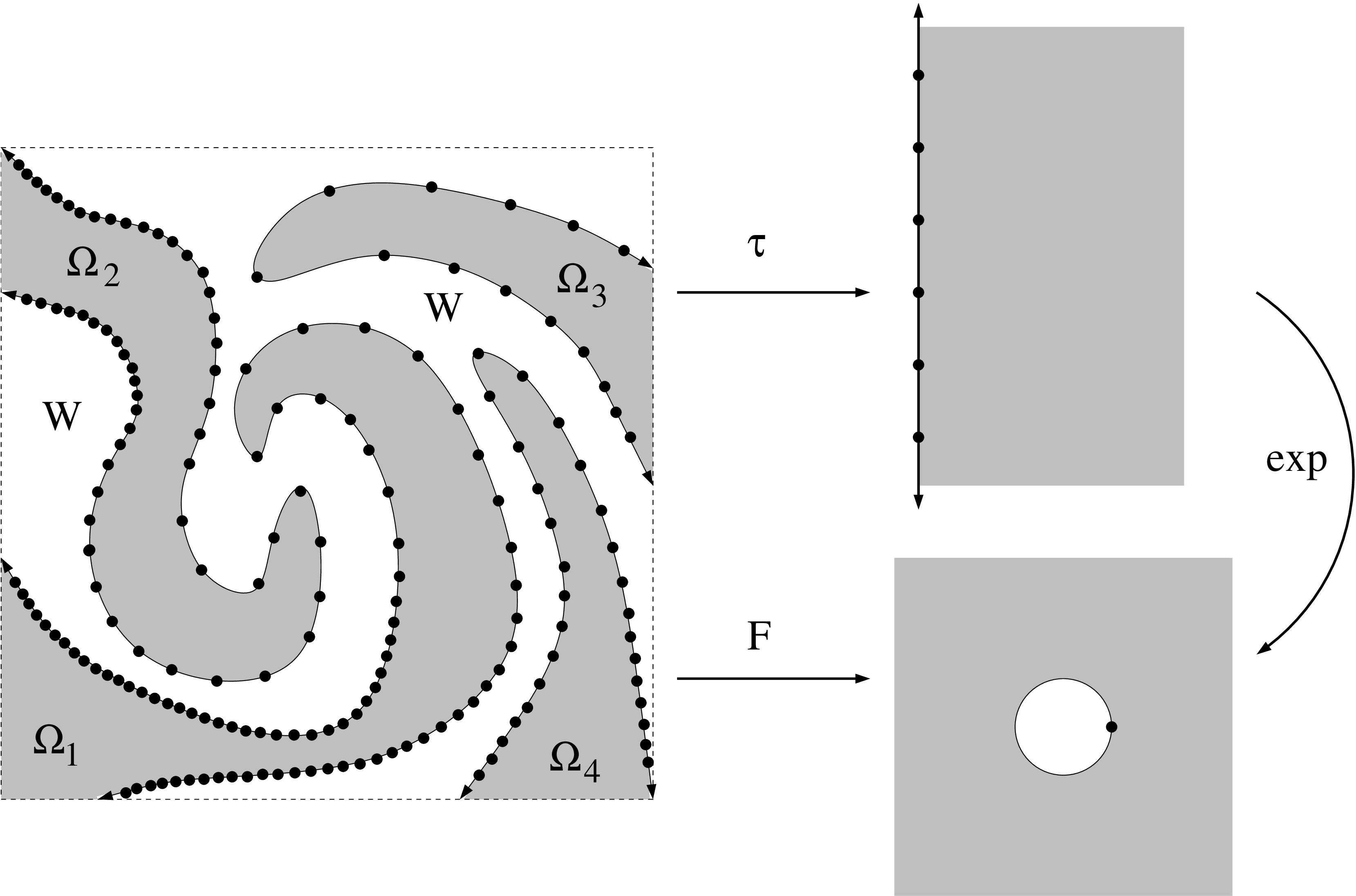}
}
\caption{ \label{LevelSet1}
A normalized  model consists of an open set $\Omega$ 
with possibly several tracts, each of which is 
mapped conformally by $\tau$ to  $\rhp$ and 
then by $e^z$ to $\{|z|> 1\}$, giving the model 
 function $F$ on $\Omega$. 
The points $F^{-1}(1)$ partition each boundary component 
into arcs. 
In the Eremenko-Lyubich class, $\tau$ can be rescaled 
independently on different tracts, so that 
 the partitions on different 
tract boundaries  are unrelated, but we will prove that for 
the Speiser class, the partitions for different 
tracts satisfy certain geometric relations. 
}
\end{figure}

Suppose $\Omega$ is a normalized model domain and 
$\rho >0$. The boundary of $\Omega_j(\rho)$ has a natural partition 
into sub-arcs  with endpoints that satisfy $\tau_j(z) 
\in \rho+ \pi i \integers$.  We call this a $\tau$-partition
or conformal partition of $\partial \Omega (\rho)$.
It is easy to see from 
the distortion theorems for conformal maps
(e.g., see Section \ref{review modulus} of this paper or
 Theorem I.4.5 of \cite{MR2150803}) 
that these sub-arcs of $\partial \Omega_j(\rho)$ 
are smooth  with bounds  depending only on $\rho$,
 and that adjacent arcs have 
comparable lengths (again with a constant
depending only on $\rho$).

Suppose $f$ is a transcendental entire function 
and that  $S(f) \subset \disk_R = \{ z: |z| < R\}$
(when $R=1$ we write $\disk = \disk_1$).
In  \cite{MR1196102}, Eremenko and Lyubich
observed  that  $\Omega=f^{-1}(\{z: |z| >R\})$
is a disjoint  union  of analytic, unbounded simply
connected domains and that $f$ acts a covering map
$f: \Omega_j \to \{|z| > R\}$  on each tract $\Omega_j$
of $\Omega$.
Thus each function $f$ in the Eremenko-Lyubich  class that 
satisfies $S(f) \subset \disk$  gives rise 
to a normalized model domain $\Omega = \{z : |f(z)|> 1\}$ and 
a model function $F = f|_\Omega$ (hence $\tau(z)$ is 
a branch of  $\log f(z)$).
The components of $\Omega$ are called the tracts of $f$.
We call a model arising in this way an Eremenko-Lyubich model. 
If $f$ is in the Speiser class, we call it a Speiser model.

The purpose  of this paper is  to 
quantify the differences between $\classB$ and $\classS$
in terms of models.
In \cite{Bishop-EL-models}, I  
 showed that Eremenko-Lyubich 
functions can essentially behave like arbitrary  
models near $\infty$; the tracts can have any   
shape and the choice of $\tau$ on each tract is  
independent of the choice in other tracts.  In this paper,
 I show that in Speiser   models the choice of $\tau$
in different tracts  must satisfy certain
geometric  constraints (e.g. Theorems \ref{not thm}
and  \ref{near constant});  however, given any model
$\Omega$ it is always possible to add extra tracts
and define $\tau$ on these new tracts so that the geometric
conditions are satisfied. Thus informally we say 
``every model is an Eremenko-Lyubich model'' and 
``every model is a sub-model of a  Speiser model''.
More precisely, 
the following theorem is proved in \cite{Bishop-EL-models}:

\begin{thm}[All models occur  in $\classB$] 
\label{QR}
Suppose $(\Omega,F)$ is a normalized  model
and $\rho >0$. 
Then there is a $f \in \classB$
and a quasiconformal homeomorphism $\varphi: \complex 
\to \complex$ so that  
$ F = f \circ \varphi$ on $  \Omega(\rho)$.
In addition,
\begin{enumerate}
 \item we have $|f \circ \varphi| \leq  e^{\rho}$ off $\Omega(\rho)$
       (i.e., $f$ is bounded off $\varphi(\Omega)$),
 \item  the singular set satisfies $S(f)  \subset D(0,e^{\rho})$,
\item the maximal dilatation
     $K$ of $\varphi$ depends only on $\rho$, 
\item the map $\varphi$ is conformal  except on $\Omega( \rho, 2\rho)$.
\end{enumerate} 
\end{thm}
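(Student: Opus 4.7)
The plan is the quasiconformal surgery approach developed in \cite{Bishop-EL-models}: build a quasiregular map $g:\complex\to\complex$ that coincides with the model $F$ deep inside each tract and with a bounded holomorphic map in the complement of a slightly larger region, then straighten $g$ to an entire function via the Measurable Riemann Mapping Theorem. Because MRMT preserves the support of the Beltrami coefficient, all dilatation will be confined to the transition strip $\Omega(\rho,2\rho)$, delivering conditions (3) and (4) automatically.

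To construct $g$, I would set $g:=F$ on $\Omega(2\rho)$, and on each connected component $U$ of $\complex\setminus\overline{\Omega(\rho)}$ take $g$ to be (a suitable multiple of) a Riemann map of $U$ into $\disk(0,e^{\rho})$; both pieces are holomorphic by construction. The nontrivial step is the quasiconformal interpolation on $\Omega_j(\rho,2\rho)$. The conformal coordinate $w=\tau_j(z)$ identifies this set with the vertical strip $\{\rho<\mathrm{Re}\,w<2\rho\}$, on which I must produce a quasiconformal map agreeing with $e^{w}$ on the right edge and with the boundary values of the exterior Riemann map (pulled back by $\tau_j$) on the left edge. The two parametrizations of the left-edge circle $\{|z|=e^{\rho}\}$ differ by an orientation-preserving homeomorphism of $\reals/2\pi\integers$, and the uniform bounds on the $\tau$-partition described in the excerpt (adjacent sub-arcs of $\partial\Omega_j(\rho)$ have comparable length and $\rho$-controlled geometry) show this homeomorphism is quasisymmetric with constant depending only on $\rho$. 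A standard quasisymmetric-to-quasiconformal extension on the cylinder then yields the desired interpolation with dilatation depending only on $\rho$.

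Once $g$ is assembled, $\mu_g$ is supported on $\Omega(\rho,2\rho)$ with $\|\mu_g\|_\infty<1$ bounded only in terms of $\rho$, so MRMT produces a quasiconformal $\varphi:\complex\to\complex$ with $\mu_\varphi=\mu_g$; then $f:=g\circ\varphi^{-1}$ has no poles (as $g$ has none) and is therefore entire. The identity $g=f\circ\varphi$ yields the matching of $F$ with $f\circ\varphi$ wherever $g=F$ (in particular on $\Omega(2\rho)$), and (3), (4) are immediate from $\mu_\varphi=\mu_g$. Property (1) is built into the exterior construction, and for (2) one observes that the interpolation and the Riemann maps are locally injective and $F=e^{\tau}$ is locally injective on $\Omega$, so critical values of $g$ (and hence of $f$) all lie in $\overline{\disk(0,e^{\rho})}$; along an asymptotic path one either has $|g|\to\infty$ inside a tract or $|g|\le e^{\rho}$ in the exterior, so finite asymptotic values lie in $\overline{\disk(0,e^{\rho})}$ as well. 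The main technical obstacle is the interpolation: showing the boundary parametrization mismatch on $\partial\Omega_j(\rho)$ is quasisymmetric with constant depending only on $\rho$ and not on the tract is precisely where the uniform geometric control of the conformal partition becomes essential.
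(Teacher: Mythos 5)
Your overall architecture (build a quasiregular $g$ agreeing with $F$ deep inside $\Omega$ and with a bounded holomorphic map outside, then straighten by MRMT) is the right general strategy and is the one used in \cite{Bishop-EL-models}. But there is a genuine gap in the exterior construction that cannot be repaired by a quasisymmetric-extension argument.

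You propose to take $g$ on each component $U$ of $\complex\setminus\overline{\Omega(\rho)}$ to be a Riemann map of $U$ into $\disk(0,e^\rho)$, and then to match boundary values across $\Omega_j(\rho,2\rho)$ by observing that ``the two parametrizations of the circle $\{|z|=e^\rho\}$ differ by an orientation-preserving homeomorphism of $\reals/2\pi\integers$.'' This is false, because the two boundary maps have different degree. In $\tau$-coordinates, $F\circ\tau_j^{-1}=e^w$ restricted to $\{\mathrm{Re}\,w=\rho\}$ covers $\{|z|=e^\rho\}$ infinitely many times (once per vertical interval of length $2\pi$); by contrast, a Riemann map $R:U\to\disk(0,e^\rho)$ has boundary values that traverse the target circle once, so $R\circ\tau_j^{-1}$ on $\{\mathrm{Re}\,w=\rho\}$ covers only a sub-arc, with degree one. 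No homeomorphism of the circle relates these, and no quasiconformal interpolation on the strip $\{\rho<\mathrm{Re}\,w<2\rho\}$ can have infinite winding on one edge and winding at most one on the other (it would fail to be a homeomorphism). The degree obstruction is already visible in the trivial example $\Omega=\rhp$, $\tau=\mathrm{id}$, $F=e^z$: here $f=e^z$ itself works, and its restriction to $\{\mathrm{Re}\,z<\rho\}$ is an infinite-to-one covering of the punctured disk, not a Riemann map.

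This is exactly what the Blaschke-product device in \cite{Bishop-EL-models} is for. One uniformizes the exterior by $\Psi:\complex\setminus\overline{\Omega(\rho)}\to\disk$, transports the boundary values of $F$ to a circle map (an inner function) of infinite degree, and approximates that inner function by a Blaschke product $B$ so that $B\circ\Psi$ is bounded, holomorphic on the exterior, and has boundary degree matching $F$ on $\partial\Omega(\rho)$ up to a controlled error. Only then is the degree mismatch resolved and a quasiregular interpolation on $\Omega(\rho,2\rho)$ with dilatation bounded in terms of $\rho$ possible; the rest of the argument then proceeds essentially as you outline. So the surgery skeleton is sound, but replacing the Blaschke product by a Riemann map removes precisely the ingredient that makes the interpolation step possible.
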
 

We will review the definition and basic properties of 
quasiconformal mappings in Section \ref{review modulus}. 
One of the main goals of this paper is to 
prove the following  analog of Theorem \ref{QR} 
 for the Speiser class:

\begin{thm} [All models  occur as  sub-models in $\classS$]
\label{App Omega} 
Suppose $(\Omega,F)$ is a normalized model and $\rho >0$. 
Then there is a $f \in \classS$ and a quasiconformal 
homeomorphism $\varphi: \complex 
\to \complex$ so that  $ F = f \circ \varphi$ on $  \Omega(\rho)$.
In addition, 
\begin{enumerate}
 \item  the function $f$  has no finite asymptotic  values and 
          two critical values,  $\pm  e^\rho$,
  \item  every critical  point of $f$  has degree $\leq 12$,
\item the maximal dilatation
      $K$ of $\varphi$ depends only on $\rho$, 
\item  the  map $\varphi$ is conformal on $\Omega(2\rho)$.
\end{enumerate} 
\end{thm}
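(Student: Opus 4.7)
The plan is to use the quasiconformal folding technique of \cite{Bishop-EL-models}, adapted to respect the two conditions that distinguish the Speiser class from the Eremenko-Lyubich class: only finitely many singular values may appear, and they must be shared across all tracts. Concretely, I would construct a quasi-regular map $g:\complex\to\complex$ satisfying (a) $g=F$ on $\Omega(2\rho)$, (b) the only critical values of $g$ are $\pm e^\rho$ and $g$ has no asymptotic values, (c) the Beltrami coefficient of $g$ vanishes on $\Omega(2\rho)$ and is uniformly bounded elsewhere by a constant depending only on $\rho$. The measurable Riemann mapping theorem then provides a quasiconformal $\varphi$ with $f=g\circ\varphi^{-1}\in\classS$ and all four conclusions of the theorem.

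On the collar $\Omega(\rho,2\rho)$ I would interpolate quasiconformally from $F$ to a prescribed boundary behavior on $\partial\Omega(\rho)$, essentially as in the proof of Theorem \ref{QR}. The new content lies in defining $g$ on $\complex\setminus\Omega(\rho)$. I would tile this region by bounded simply connected faces $\{D_k\}$ of controlled hyperbolic geometry, and on each $D_k$ define a $\cos$-type quasi-regular map $g_k:D_k\to\disk_{e^\rho}$ branched only over $\pm e^\rho$, whose boundary $\partial D_k$ is partitioned into arcs mapping alternately onto the upper and lower semicircles of $\partial\disk_{e^\rho}$. Because every such face is bounded and each $g_k$ is a finite branched cover, no asymptotic values are introduced in the complement.

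The main obstacle is the matching condition where the tiling meets $\partial\Omega(\rho)$. The $\tau$-partition on each $\partial\Omega_j(\rho)$ is rigidly prescribed by the model, whereas the partition supplied by the adjacent faces $D_k$ is determined by the tiling. In Theorem \ref{QR} one had the freedom to rescale $\tau$ independently on each tract in order to match arc lengths; in the Speiser case this freedom is gone, because the same two critical values $\pm e^\rho$ must serve every tract simultaneously. My resolution is to adjoin auxiliary \emph{bubble tracts} to the model: small simply connected domains attached along pieces of $\partial\Omega(\rho)$ where a mismatch would otherwise occur, on which a bounded auxiliary model function is placed whose $\tau$-partition is tailored to absorb the discrepancy between the tiling and the original $\tau$-partition. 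The original tracts of $\Omega$ are retained unchanged, which is precisely why the conclusion is a sub-model statement rather than an equality: $f$ carries strictly more tracts than $F$. The degree bound of $12$ for critical points should emerge from a combinatorial lemma limiting how many faces (original tract, bubble tract, or complementary $D_k$) can meet at any vertex of the resulting partition, given the bounded hyperbolic geometry forced on each face by the distortion theorems recalled in the introduction.

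Once $g$ is constructed, the final step is a standard appeal to the measurable Riemann mapping theorem applied to the Beltrami coefficient of $g$. Since that coefficient vanishes on $\Omega(2\rho)$, the straightening map $\varphi$ is automatically conformal there, giving conclusion (4). The dilatation bound on $g$ depends only on the interpolation in $\Omega(\rho,2\rho)$ and the bounded geometry of the tiles, each of which is controlled by $\rho$, giving (3). Conclusions (1) and (2) follow directly from the construction: the only critical values are $\pm e^\rho$ by the choice of $g_k$ and the bubble tract models, no asymptotic values arise because every complementary face is bounded and every tract carries a model function whose only asymptotic value is $\infty$, and the degree bound is the combinatorial output of the tiling.
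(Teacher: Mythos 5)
Your plan has a genuine conceptual gap that would make it fail for exactly the examples that motivate the theorem. You propose to tile $\complex\setminus\Omega(\rho)$ by \emph{bounded} faces $D_k$ carrying finite-degree $\cos$-type maps $g_k:D_k\to\disk_{e^\rho}$, plus ``small'' bubble tracts carrying ``bounded auxiliary model functions.'' On every such piece $|g|\le e^\rho$, so the resulting entire function would satisfy $|f\circ\varphi|\le e^\rho$ off $\Omega(\rho)$, i.e., $\{|f|>e^\rho\}=\varphi(\Omega(\rho))$, and $f$ would have exactly the same tracts as the model. But that is the \emph{stronger} conclusion of Theorem~\ref{QR}, and Theorem~\ref{not thm} shows it is impossible for the half-strip: no Speiser class function has a single tract that is a quasiconformal image of $S=\{x+iy:x>0,|y|<1\}$. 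Concretely, the $\tau$-partition of $\partial S(\rho)$ has arcs decaying exponentially, whereas Theorem~\ref{near constant} forces partition arcs for pre-images of the singular disks to be comparable to nearby arcs of $\partial\Omega$, and Lemmas~\ref{HM 1} and~\ref{HM 2} then give incompatible polynomial-versus-exponential size estimates. So the bounded-face tiling cannot be carried out with uniformly bounded dilatation; the ``mismatch'' you flag is not a local bookkeeping issue but a global modulus obstruction.

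The fix, and the actual content of the paper's proof, is that the extra tracts must be \emph{unbounded}: for any entire $f$ with bounded singular set, every component of $\{|f|>e^\rho\}$ is unbounded, so ``small'' bubble tracts can never carry large values of $f$. The paper therefore never builds $g$ from scratch. It connects the components of $\partial\Omega(\rho)$ into a bounded-geometry tree $T_1$ (Lemma~\ref{build T1}, via a Whitney-type construction in the Riemann-mapped disk), subdivides the new, unbounded complementary components so they meet the $\tau$-length lower bound (Lemma~\ref{infinitely many}, or Lemma~\ref{single component} for the sharp tract count of Theorem~\ref{details}), and then invokes Theorem~\ref{Exists} from \cite{Bishop-classS} as a black box to produce $f\in\classS_{2,0}$ with critical values $\pm 1$; a final rescaling and a $\cosh$-to-$\exp$ correction give the stated conclusions. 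Two smaller issues in your proposal: the quasiconformal folding technique is in \cite{Bishop-classS}, not \cite{Bishop-EL-models} (the latter uses a Blaschke-product approximation for the $\classB$ case), and the degree bound $12$ is not a combinatorial output of your tiling but comes from part (4) of Theorem~\ref{Exists} applied to a tree of maximal vertex degree $3$, giving $4\cdot 3=12$.
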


The   maximal dilatation
bound for $\varphi$  remains bounded as $\rho \to \infty$, 
but blows up as $\rho \to 0$ (we will not be explicit 
about the dependence of $K$ on $\rho$, but estimates 
could be derived from a careful reading of \cite{Bishop-classS}).
It is not true that the  maximal dilatation $K$ tends to 
$1$ as $\rho \to \infty$, at least for the construction 
given here, since the use of the folding maps 
from \cite{Bishop-classS} introduce a fixed amount of 
distortion, independent of $\rho$.

The degree of a critical point $z$ of a holomorphic map $f$ 
is taken to be the local valence of $f$ near $z$, e.g., 
$f(z) =z^3$ has  a critical point of degree three  at $0$.
The bound in (2) follows immediately from the proof of the 
folding theorem in \cite{Bishop-classS}.
However, by making some simple changes to the
construction in \cite{Bishop-classS}, the 12
 can be improved to 4. 
 This will be discussed in more detail 
at the end of Section \ref{QC folding}.

The crucial  difference between
 Theorems \ref{QR} and  \ref{App Omega}
 is that the latter  omits the  conclusion
 ``$ |f \circ \varphi| \leq e^{\rho}$  off $\Omega(\rho)$''.
The function $f \in \classB$ constructed in Theorem \ref{QR} 
is only large where the model is large (inside $\Omega$),
 so it has the same number of tracts
as the model has. However, the function $f \in \classS$ in Theorem 
\ref{App Omega} might also be large outside $\Omega$, and 
so it can have ``extra'' tracts.
This is the sense in which approximation by Speiser functions 
is weaker than approximation by Eremenko-Lyubich functions.  
In fact, our proof will always introduce 
extra tracts; we will first give a construction  
that creates an infinite number of
extra tracts, and then give a more intricate construction 
that shows: 

\begin{thm} \label{details} 
The function $f$ in Theorem \ref{App Omega} may be chosen 
so that the number of tracts of $f$ is at most 
 twice the number of tracts of the model $(\Omega,F)$.
\end{thm}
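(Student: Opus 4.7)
The plan is to refine the construction behind Theorem \ref{App Omega}. In the ``infinite tracts'' version alluded to in the text, the complement $\complex \setminus \Omega$ is filled by a dyadic/Whitney type decomposition, and after quasiconformal folding a great many of these cells give rise to separate tracts of $f$. To obtain at most twice as many tracts as in the model, I would instead attach exactly one companion tract $\Omega_j^\ast$ to each tract $\Omega_j$ of $(\Omega,F)$, and arrange that the set $\{|f|>e^\rho\}$ coincides (up to the folding region) with $\Omega \cup \bigcup_j \Omega_j^\ast$; this makes the tract count at most $2$ per $\Omega_j$ automatic.

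Concretely, I would first thicken a slim conformal neighborhood $N_j$ of $\partial \Omega_j$ and place an unbounded simply connected $\Omega_j^\ast \subset N_j \setminus \overline{\Omega_j}$, designed so that its two sides run along $\partial \Omega_j$ on either side. The $\tau$-partition of $\partial \Omega_j(\rho)$ already supplies a distinguished sequence of vertices along $\partial \Omega_j$, and I would choose a conformal map $\tau_j^\ast : \Omega_j^\ast \to \rhp + \rho$ so that the induced partition of $\partial \Omega_j^\ast$ combinatorially interlocks with the partition along the adjacent pieces of $\partial \Omega_j$. One then applies the quasiconformal folding theorem from \cite{Bishop-classS} across $\partial \Omega \cup \bigcup_j \partial \Omega_j^\ast$ to produce a quasiregular $g$ on $\complex$ that equals $F=e^\tau$ on $\Omega(\rho)$, equals $-e^{\tau_j^\ast}$ on $\Omega_j^\ast(\rho)$, has only the two critical values $\pm e^\rho$ and no asymptotic values. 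The measurable Riemann mapping theorem then straightens $g$ to an entire $f \in \classS_{2,0}$ satisfying the remaining conclusions of Theorem \ref{App Omega}, with the conjugating quasiconformal $\varphi$ inheriting the required dilatation and conformality properties.

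The main obstacle is the combinatorial matching in the second step. The conformal normalization of $\tau_j^\ast$ is rigid up to a single real translation, so the requirement that its partition lines up with the existing $\tau$-partition on \emph{both} sides of $\partial \Omega_j$, and for every $j$ simultaneously, must be solved by shape choice rather than by adjusting $\tau_j^\ast$ itself. The two ingredients that make this possible are the freedom to deform $\Omega_j^\ast$ inside $N_j$, and the fact that adjacent $\tau$-partition arcs on $\partial \Omega_j$ have comparable length (the conformal distortion estimate cited in the introduction). Once a compatible matching is achieved for each $\Omega_j$, the folding estimates and dilatation bounds used in Theorem \ref{App Omega} go through unchanged, and the tract bound in Theorem \ref{details} follows because exactly one $\Omega_j^\ast$ has been added per $\Omega_j$.
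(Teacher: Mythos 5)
Your high-level plan (add exactly one companion tract per model tract, then apply quasiconformal folding) is the right goal, and you correctly flag the crux: making the conformal partition of the companion tract satisfy the $\tau$-length lower bound while remaining compatible with the existing partition on $\partial\Omega_j$. But you do not actually resolve that crux, and the way you set it up creates an additional topological problem. First, placing $\Omega_j^\ast$ inside a ``slim conformal neighborhood'' of $\partial\Omega_j$ leaves a leftover region of $\complex\setminus(\Omega\cup\bigcup_j\Omega_j^\ast)$ that is neither a tract of $F$ nor one of your companions; the folding theorem (Theorem \ref{Exists}) requires $\tau$ to be defined on \emph{every} complementary component of the tree, so this leftover region would itself become one or more extra tracts, defeating the count. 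To avoid it you must have the companions genuinely tile $W=\complex\setminus\overline{\Omega(\rho)}$, and since $W$ is a single simply connected domain when $\Omega$ has several tracts, that tiling requires explicit cuts. The paper builds those cuts via a Whitney/Carleson decomposition of $\Psi(W)=\disk$ (Lemma \ref{Build T1 in disk}), producing exactly $N$ new complementary pieces, one per model tract; your write-up never constructs these cuts.

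Second, and more seriously, a slim tube running along $\partial\Omega_j$ is exactly the geometry for which the $\tau$-length condition \emph{fails}: by the argument in Lemma \ref{HM 2}, the conformal partition of a thin tube decays exponentially, so no amount of ``shape choice'' within a slim neighborhood can make those lengths bounded below. You write that the rigidity of $\tau_j^\ast$ ``must be solved by shape choice rather than by adjusting $\tau_j^\ast$ itself,'' which is correct, but then simply assert that the deformation freedom plus comparability of adjacent arcs makes it possible. That is precisely the step that requires work, and it is where the paper spends essentially all of Sections \ref{tau estimate} and \ref{proof single comp}. The actual construction (Lemma \ref{single component}) replaces the slim tube by a \emph{meandering} central tube whose width grows with distance (built from a symmetrized dyadic refinement of the boundary partition, Lemma \ref{symmetric partition}), with trapezoidal ``rooms'' attached along the sides; the $\tau$-length lower bound is then proved by two explicit extremal-length estimates (Lemma \ref{tube bound} for the tube and Lemma \ref{add rooms} plus the modulus computation comparing $M(J,K)$ with $M(I_0,K)$ for the trapezoid sides). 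Without some version of these estimates — or some other mechanism that defeats the exponential decay — your argument does not establish the hypothesis of the folding theorem, so the proof does not go through as written.
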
 

Simple examples show that  some models with $n$ tracts 
require the approximating Speiser class function  to 
have $2n$ tracts, so the bound in Theorem \ref{details} 
is sharp.  Roughly speaking, if $\Omega$ 
has $n$ tracts, then the domain 
$W = \complex \setminus \Omega(\rho)$ has $n$  distinct
``ends'' at infinity. If these ends are each ``large''  
 compared to the tracts of the model, then each end must 
contain at least one  extra  tract of the approximating
Speiser class function.  A very 
concrete example is:  

\begin{thm} \label{not thm} 
The half-strip
 $S = \{ x+iy : x>0, |y|< 1\}$ cannot be mapped to any  
Speiser class  model domain  by any quasiconformal homeomorphism
of the plane.
\end{thm}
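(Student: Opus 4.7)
Suppose, toward a contradiction, that $\varphi : \complex \to \complex$ is a quasiconformal homeomorphism with $\varphi(S) = \Omega$, where $\Omega = f^{-1}(\{|z|>R\})$ is the model domain of some $f \in \classS$ with $S(f) \subset \disk_R$. Since $S$ is connected, so is $\Omega$; hence $f$ has exactly one tract and $f : \Omega \to \{|z|>R\}$ is a universal covering.

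The plan is to combine the explicit conformal parameterization of $S$ with the Speiser class rigidity and the ``extra tract'' principle described in the paragraph preceding the statement. A conformal map $\tau_S : S \to \rhp$ is given by $\tau_S(z) = \sinh(\pi z / 2)$, so that $|\tau_S(z)| \asymp e^{\pi x/2}$ and $|\tau_S'(z)| \asymp e^{\pi x/2}$ as $x \to +\infty$ along $\partial S$. Consequently, for any fixed $\rho>0$, the $\tau$-partition of $\partial S$ has arcs whose Euclidean lengths decay like $e^{-\pi x/2}$ along the two boundary rays $\{y = \pm 1,\, x>0\}$, even though those rays remain at Euclidean distance $2$ throughout. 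Under the global QC map $\varphi$ these features transport to $\partial\Omega$ with only power distortion depending on the dilatation.

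Next, I would derive the contradiction from the ``extra tract'' heuristic. The observation sketched in the paper is that whenever the complement $W = \complex \setminus \overline{\Omega(\rho)}$ has an end that is large compared with the tracts of the model, a Speiser class function must contain at least one tract in that end. Here $\complex\setminus\overline{\Omega}=\varphi(\complex\setminus\overline{S})$ is a QC image of the complement of a half-strip, and hence contains the QC image of a full half-plane; it dwarfs the thin tract $\Omega$. Invoking Theorem \ref{near constant} applied to $\Omega$ and its (presumed nonexistent) neighbors, the shrinking partition lengths along the two sides of $\partial \Omega$ cannot be supported within a Speiser model without a second tract sitting in the vast complement to ``balance'' the conformal geometry. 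Since by hypothesis $\Omega$ is already the entire model domain of $f$, no such extra tract is available, which is the desired contradiction.

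The main obstacle is to pin down the precise statement that forces the extra tract to exist. I would expect the cleanest route to combine Theorem \ref{near constant} with a conformal modulus estimate: the modulus of the family of curves in $V := \complex\setminus\overline{\Omega}$ joining the two sides of $\partial\Omega$ at fixed $\tau$-heights is prescribed, by the Speiser class structure, to match the rigid modulus computed on the $\rhp$-side via $\tau$. For the half-strip image, these two moduli disagree quantitatively: the complement $V$ offers exponentially more ``room'' to separate the two sides than the conformal image on $\rhp$ allows. Translating this mismatch, via QC invariance of modulus, into a rigorous violation of the Speiser class constraints is the crux of the proof.
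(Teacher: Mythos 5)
Your intuition is correct and aligned with the paper's approach, but your argument is incomplete, as you yourself acknowledge when you write that ``the main obstacle is to pin down the precise statement that forces the extra tract to exist.'' The paper supplies exactly the two quantitative lemmas you are groping for, and the contradiction goes through Theorem~\ref{near constant} just as you suggest, but in a more concrete form than your modulus heuristic.

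The missing pieces are Lemma~\ref{HM 1} and Lemma~\ref{HM 2}. Lemma~\ref{HM 2} makes rigorous your observation (via $\sinh$) that the conformal partition of a QC image of the half-strip has arcs whose diameters decay like $\exp(-C\cdot\mathrm{dist}(J,0)^\alpha)$; note this requires the Teichm{\"u}ller--Wittich--Belinski\u\i{} normalization (Theorem~\ref{TWB thm}) to control the global QC map, since Mori's H{\"o}lder estimate alone would not preserve the exponential rate. Lemma~\ref{HM 1} is the complementary statement you lack entirely: any domain containing an unbounded quasidisk has conformal partition arcs whose diameters decay at most polynomially in $|z|$. The actual contradiction is then produced by Theorem~\ref{near constant}, applied not to a hypothetical ``extra tract'' but to a component $U$ of $W(a,\epsilon)=f^{-1}(D(a,\epsilon))$. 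One first arranges (again via Theorem~\ref{TWB thm} and a factorization $\phi=g_2\circ g_1$) that the complement $\complex\setminus(\Omega\cup T_\Omega(t))$ contains the image of a full quasi-half-plane, which must lie inside a single such component $U$. Theorem~\ref{near constant} forces each partition arc of $\partial U$ to sit within a bounded multiple of, and have diameter comparable to, a nearby partition arc of $\partial\Omega$. But Lemma~\ref{HM 1} gives a polynomial lower bound on the $\partial U$ arcs while Lemma~\ref{HM 2} gives an exponential upper bound on the $\partial\Omega$ arcs, and these cannot be compatible at large $|z|$.

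So the ``extra tract'' framing in your write-up is a slight misdirection: the proof does not argue that $f$ must have a second tract and derive a contradiction from its absence. Rather, it argues that the complement is large enough to contain an unbounded quasidisk, identifies the preimage component of a singular disk that swallows it, and then shows that the Speiser rigidity of Theorem~\ref{near constant} is quantitatively violated by comparing the two decay rates. If you want to complete your proposal, you should prove (or cite analogues of) the polynomial lower bound and exponential upper bound and then apply them exactly where you currently gesture at a ``quantitative disagreement'' in moduli.
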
 

In other words, there is no Speiser class function with 
a single tract, so that this tract is
the image of a half-strip under a 
quasiconformal map of the plane.
However, there are   Speiser
class  functions with two tracts, one of which  can 
be sent to a half-strip by a quasiconformal map of the 
plane; moreover, this tract can approximate 
the half-strip  in the Hausdorff metric on the plane 
 as closely as we wish.
See Figure \ref{TwoExtra} and the remarks in Section \ref{not}.
On the other hand, Theorem \ref{QR} implies
there are Eremenko-Lyubich functions with
single tracts that approximate the half-strip as 
closely as we wish in the Hausdorff metric.

The referee of this paper asked if Theorem \ref{not thm}
also holds for any tract that is contained in the
half-strip $S$. While our proof of Theorem \ref{not thm}
 extends to cover  many 
cases of this type,  and it  is not hard to 
see that  no 
subdomain of $S$ can  itself be a Speiser class 
model domain, there might be such   a subdomain
 that can be mapped to a Speiser class  model
 domain by some quasiconformal map of the plane.
Deciding this would be an 
interesting problem.  It would  also be 
very interesting to have a geometric characterization (even up 
to quasiconformal maps) of the tracts of Speiser class
functions that have a single tract.

Another difference between Theorems \ref{QR} and \ref{App Omega}
concerns the proofs. The proof of Theorem \ref{QR} given 
in \cite{Bishop-EL-models} is mostly self-contained  and 
depends on constructing a Blaschke product in the disk 
that approximates a certain inner  function  
arising from the model. On the other hand, 
the proof of Theorem \ref{App Omega} in this paper  depends 
on the more difficult quasiconformal folding
construction of Speiser class functions in \cite{Bishop-classS}.
The precise statement we use will be reviewed in 
Section \ref{QC folding}.

 Finally, we mention an application of 
Theorem \ref{App Omega} to dynamics. 
We call a  model $(\Omega,F)$  disjoint type if 
it is normalized and 
$\overline{\Omega} \cap \overline{\disk} = 
\emptyset$. 
An entire function is usually called 
disjoint type if (1) it is hyperbolic (the singular
set is bounded and every point in it tends to 
an attracting periodic cycle of $f$ under iteration)
and (2) the Fatou set is connected (the Fatou set is 
the largest open set on which the iterates of $f$ form 
a normal family; its complement is called the Julia 
set of $f$).
Alternatively, Proposition 2.1 of \cite{Rempe-arc-like}
  states that 
a transcendental entire function is disjoint type 
if and only if there is a Jordan domain $D$ so that 
$S(f) \subset D$ and $f(\overline{D}) \subset D$.
This implies that if  $(\Omega, F)$ is an 
disjoint type Eremenko-Lyubich  model,
then $F = f|_\Omega$  where $f$ 
is an Eremenko-Lyubich  entire function that is 
disjoint type  in the sense above (just take $D = \disk$).

We can iterate a model function  
$F$ as long as the iterates keep landing
in $\Omega$, and we define the Julia set of a model as
$$ \Julia(F) = \bigcap_{n\geq 0} \{ z\in \Omega: F^n(z) 
         \in \Omega \}.$$
If $F$ is a disjoint type Eremenko-Lyubich model,
 then this is the same as  the usual Julia set of the 
extension of $F$.
Lasse Rempe-Gillen has pointed out 
that Theorem \ref{QR} implies that
 any disjoint type  model function 
is conjugate on its domain to a  disjoint type 
$f \in \classB$, in particular, 
the Julia set and the escaping set  for the model  function $F$
are homeomorphic via a quasiconformal mapping of the 
whole plane to the corresponding sets for $f$. Thus
various pathological examples in $\classB$ can be 
 constructed  simply by exhibiting  a model with the
desired property, e.g., see \cite{Rempe-arc-like}.

For the Speiser class, the approximating function $f$ 
may have extra tracts that do not correspond to 
tracts of the model. In this case, Rempe-Gillen's argument 
 implies the model function  $F$ restricted to its   Julia set  
can be conjugated to  a Speiser class function $f$ 
restricted to a certain   closed subset $A \subset \Julia(f)$. 
More precisely, 

\begin{thm} \label{Speiser models}
Suppose that $(\Omega, F)$ is any normalized, 
disjoint type 
model, that $f$ is a Speiser class function,
and that $\varphi$ is a
quasiconformal mapping  of the plane
so that $f = F \circ \varphi$ on 
$U= \varphi^{-1}(\Omega) $ (this is a sub-collection
of tracts of $f$). Assume that $(U, f|_U)$ is also a
normalized,  disjoint type model.
Then there is a quasiconformal map $\Phi : \complex \to 
\complex$  so that $\Phi \circ f = F \circ \Phi$ on $U$. 
\end{thm}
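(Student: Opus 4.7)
The plan is to construct $\Phi$ by the standard quasiconformal pull-back procedure, generalizing the argument Rempe-Gillen uses in the Eremenko-Lyubich setting (which is referenced in the excerpt). The key observation is that the equation $\Phi \circ f = F \circ \Phi$ on $U$ is equivalent to the Beltrami coefficient $\mu_\Phi$ being $f$-invariant on $U$, so the problem reduces to producing such a coefficient of modulus bounded away from $1$ and then invoking the measurable Riemann mapping theorem.

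First I would note that, because both $f$ and $F$ are holomorphic, the identity $f = F \circ \varphi$ on $U$ forces $\mu_\varphi \equiv 0$ on $U$ (indeed, $\mu_{F\circ\varphi}=\mu_\varphi$ since $F$ is holomorphic, and this equals $\mu_f=0$ on $U$), so the Beltrami coefficient of $\varphi$ is supported in $\complex \setminus U$. Setting $\mu_0 := \mu_\varphi$, I would build the desired coefficient by iterated pull-back:
\begin{equation*}
\mu(z) = \mu_0(f^{N(z)}(z))\,\prod_{k=0}^{N(z)-1}\overline{f'(f^k(z))}/f'(f^k(z)),
\end{equation*}
where $N(z)\geq 0$ is the first exit time from $U$ under $f$-iteration and $\mu$ is set to $0$ on the forward-invariant set $\Julia(f|_U) = \{N=\infty\}$. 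Since pull-backs by holomorphic maps preserve $|\mu|$ pointwise, one has $\|\mu\|_\infty = \|\mu_\varphi\|_\infty < 1$; furthermore $\mu = \mu_\varphi$ on $\complex\setminus U$, and a routine check shows the invariance $\mu(z) = \mu(f(z))\overline{f'(z)}/f'(z)$ for every $z \in U$ (the identity is trivial on $\Julia(f|_U)$ since both sides vanish, which avoids any worry about whether that set has positive measure).

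Solving the Beltrami equation yields a quasiconformal $\Phi \colon \complex \to \complex$ with $\mu_\Phi = \mu$, normalized so that $\Phi \circ \varphi^{-1}$ is the identity on a chosen triple of base points (for instance, so that $\Phi$ and $\varphi$ have the same asymptotic development at $\infty$). Because $\mu = \mu_\varphi$ off $U$, the composition $\Phi \circ \varphi^{-1}$ is conformal on $\complex \setminus \overline{\Omega}$; because $\mu$ is $f$-invariant on $U$, the map $g := \Phi \circ f \circ \Phi^{-1}$ is holomorphic on $\Phi(U)$. With the chosen normalization one shows $\Phi(U) = \Omega$, the holomorphic covering $g$ agrees with $F$ tract-by-tract, and the conjugacy $\Phi \circ f = F \circ \Phi$ on $U$ follows.

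The hard part is the final identification $g = F$. A priori $g$ is only a holomorphic covering $\Phi(U) \to \Phi(\{|w|>1\})$ with the same tract combinatorics as $F$, and could in principle differ from $F$ by precomposition with a non-trivial deck transformation on each tract. Rigidity of normalized models rescues us: each tract of $\Omega$ admits a conformal uniformization $\tau$ onto $\rhp$ that is unique up to a real translation of the target, and fixing $\Phi$ to match $\varphi$ at $\infty$ kills this remaining ambiguity. The Rempe-Gillen argument for the Eremenko-Lyubich case then carries over verbatim, the only novelty being that the extra tracts of $f$ lying outside $U$ simply do not enter the conjugacy equation.
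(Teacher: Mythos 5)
Your outline is in the right family (the Rempe-Gillen pull-back is indeed what the paper refers to via Theorem 9.1 of \cite{Bishop-EL-models}), and the computation of the candidate Beltrami coefficient $\mu$ is correct: it is the pointwise limit of the coefficients of the iterated lifts $\Phi_{n+1}:=F^{-1}\circ\Phi_n\circ f$ on $U$, $\Phi_{n+1}:=\Phi_n$ off $U$, $\Phi_0:=\varphi$. But the proof as you have structured it has a genuine gap, and it starts in the very first sentence. The equation $\Phi\circ f=F\circ\Phi$ on $U$ is \emph{not} equivalent to $f$-invariance of $\mu_\Phi$ on $U$; invariance is necessary but far from sufficient. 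If $\Phi$ conjugates, then so does $A\circ\Phi$ for no affine map $A$ other than the identity, yet $A\circ\Phi$ has exactly the same (hence $f$-invariant) Beltrami coefficient. More to the point, $f$-invariance of $\mu_\Phi$ tells you only that $g:=\Phi\circ f\circ\Phi^{-1}$ is holomorphic on $\Phi(U)$; it says nothing about $\Phi(U)=\Omega$, nothing about $g$ covering $\{|w|>1\}$ rather than its image $\Phi(\{|w|>1\})$, and nothing about $g$ being $F$. You flag this as ``the hard part,'' but the argument you offer (``rigidity of normalized models,'' ``unique up to a real translation,'' ``fixing $\Phi$ to match $\varphi$ at $\infty$ kills the ambiguity'') does not actually establish any of the missing facts: the conformal map $\Phi\circ\varphi^{-1}$ on $\complex\setminus\overline\Omega$ need not be the identity just because it is conformal there and agrees with $\varphi$ asymptotically, and without that you cannot conclude $\Phi(U)=\Omega$, let alone $g=F$ tract by tract.

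What is missing is precisely the step that the Rempe-Gillen argument supplies and where the disjoint-type hypothesis enters. One does not solve a Beltrami equation and then try to identify the resulting map a posteriori; one constructs $\Phi$ directly as the uniform limit of the lifts $\Phi_n$ above, which all have dilatation $\leq K(\varphi)$, and the disjoint-type condition gives uniform expansion of $F$ (equivalently contraction of the branches of $F^{-1}$ in the hyperbolic metric of $\{|w|>1\}$) so that the $\Phi_n$ form a Cauchy sequence. The limit $\Phi$ then satisfies $\Phi=F^{-1}\circ\Phi\circ f$ on $U$ by construction and continuity, which is exactly the conjugacy, and $\Phi(U)=\Omega$ comes along for free since $\Phi=\varphi$ off $U$. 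In your write-up the disjoint-type hypothesis is never used except to note that iteration makes sense, which is a symptom of the gap: without expansion you cannot close the argument. I would recommend either presenting the iterated-lifting proof directly, or, if you want the Beltrami formulation, first proving the iterated lifts converge and only afterward observing that the limit solves your Beltrami equation; the Beltrami equation alone cannot substitute for the convergence argument.
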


In other words, the Julia set of the model function 
$F$ is quasiconformally conjugate to a closed subset
$A$ of the Julia set of the Speiser class function $f$.
The set  $A$  consists of those points whose orbits stay
 within $U$ forever, 
where $U$ is the sub-collection of $f$'s tracts
corresponding to the tracts of $F$ via $\Phi$.
This result is  a straightforward application of 
Theorem 9.1 in \cite{Bishop-EL-models}
(which itself is simply a summary of an argument
of Rempe-Gillen  from \cite{MR2570071}).

The use of quasiconformal techniques to build and
understand entire functions with finite singular 
sets has a long history with its roots in the 
work of Gr{\"o}tzsch, Speiser,  Teichm{\"u}ller, Ahlfors,
Nevanlinna, Lavrentieff  and many others. 
The earlier 
work was often phrased in terms of  Riemann surfaces
and deciding if  a simply connected surface built 
by branching over a finite singular set was conformally 
equivalent to the plane or to the disk (the type 
problem; in the first case the uniformizing map 
gives a Speiser class function). Such constructions play
an important role in  value distribution theory;
see \cite{MR2121873} for an excellent survey of these  
methods and a very useful guide to this literature. Also see
Chapter VII of \cite{MR2435270}.
More recent work (including this paper) is motivated
by applications to dynamics, where the Speiser
class provides  an interesting  mix of structure (like 
polynomials, the quasiconformal 
equivalence classes are finite dimensional \cite{MR1196102})
and flexibility (as indicated 
by the results of  \cite{Bishop-classS}, \cite{Bishop-order}, 
\cite{Rempe-arc-like}
and the current paper). 

Many thanks to Simon Albrecht, Adam Epstein, Alex Eremenko
 and Lasse Rempe-Gillen
for  numerous helpful discussions about the 
content of this paper and
about the quasiconformal folding construction 
and its applications.
The introduction of the paper and the formulation of
the main result in terms  of models
was inspired by a lecture of Lasse Rempe-Gillen at
an ICMS conference on transcendental dynamics in Edinburgh, May 2013.
The results of both this paper and \cite{Bishop-EL-models}
originally appeared in a single  2013
 preprint titled ``The geometry of bounded
type entire functions''. Based partly on a
referee's report, I decided to split that manuscript  
in order to improve the exposition and separate the  self-contained
arguments  for the Eremenko-Lyubich class
 (now contained in \cite{Bishop-EL-models})
from the proofs for the Speiser class that depend  crucially  on 
the quasiconformal folding  techniques in \cite{Bishop-classS}.
Theorem  \ref{details} is new and did not appear 
in the earlier manuscript.
Malik Younsi read the revised manuscript and I 
greatly appreciate his comments and suggestions.
The referee of the current paper
produced two detailed and thoughtful reports 
that contained
numerous comments and suggestions that improved the 
exposition, and I  am most thankful for  the great deal 
of time and effort  that went into   these reports.
Finally, I am indebted to Aimo Hinkkanen for
for a great deal of encouragement  and 
constructive  advice that substantially improved
both this paper and its companion \cite{Bishop-EL-models}.

In this paper, the notation $A\lesssim B$ means that 
$A \leq  C B$ where $A,B$ are quantities that depend on 
some parameter and $C < \infty$ is a constant that is 
independent of the parameter. The notation means the 
same as $A = O(B)$. Similarly,   $A \gtrsim B$ is 
equivalent to $B\lesssim A$ or $B=O(A)$ . If 
$A\lesssim B$ and $A \gtrsim B$ then we say $A \simeq B$, 
i.e., $A$ are $B$ are comparable, independent of the 
parameter.

\section{Modulus and quasiconformal maps }  \label{review modulus}

Many of our arguments involve the modulus of path 
families, conformal maps and quasiconformal maps, 
 so we briefly review the basic facts 
here for the convenience of the reader. Everything in 
this section  can be found (in greater detail and 
with proofs)   in standard references such 
as \cite{MR2241787} or \cite{MR2150803}.

An orientation preserving 
 homeomorphism $\varphi$ of the plane to itself is 
quasiconformal if it is absolutely continuous on all 
lines and $|\varphi_{\zbar}| \leq k |\varphi_z|$ almost
everywhere (with respect to area measure)  for
some $k < 1$. At points of differentiability, this means 
that the tangent map  of $\varphi$ sends circles to 
ellipses of eccentricity  at most $K = (1+k)/(1-k) \geq 1$. 
The smallest $K$ that works for $\varphi$ at almost 
every point  is called 
the maximal dilatation of $\varphi$;
 such a map is also called $K$-quasiconformal.
A $K$-quasiconformal map $\varphi$ satisfies 
a Beltrami equation $f_\zbar = \mu f_z$ almost everywhere
for some bounded measurable  function $\mu$ called 
the dilatation of $f$ and $ \|\mu\|_\infty \leq k =(K-1)/(K+1) $.
A $1$-quasiconformal map is conformal.  The family of $K$-quasiconformal 
maps of the plane to itself that fix two finite points 
(usually taken to be $0,1$) is compact.

The measurable Riemann mapping theorem (e.g., see \cite{MR2241787})
says that given any measurable $\mu$ with 
$\|\mu\|_\infty = k < 1$, there is a $K$-quasiconformal 
map with dilatation $\mu$ almost everywhere.
An important consequence of this is that if $f$ is entire 
and $\varphi$ is quasiconformal, then there exists a 
quasiconformal $\psi$ so that $g = \varphi \circ f \circ \psi$ 
is  entire. Two entire functions $f$ and $g$ that 
are related in this 
way are called quasiconformally equivalent.
Eremenko and Lyubich proved 
that if $f$ has $q$ singular values, 
then the collection of entire 
functions  that are quasiconformally equivalent to $f$ 
forms a $(q+2)$-dimensional complex manifold (see Section 3 of 
\cite{MR1196102}).

Suppose $\Omega$ is a planar open set. A 
non-negative Borel function $\rho$ on $\Omega$
is called a metric on $\Omega$.
Suppose $\Gamma$ is a collection of locally rectifiable 
curves in $\Omega$.  We say  a metric $\rho$ is 
an admissible metric  for $\Gamma$ if 
$$ \inf_{\gamma \in \Gamma}  \int_\gamma \rho ds \geq 1,$$
and we define the modulus of $\Gamma$ as 
$$ M(\Gamma) = \inf_\rho \int_\Omega \rho^2 dxdy,$$
where the infimum is over all admissible metrics for $\Gamma$.
The reciprocal of $M(\Gamma)$ is called the extremal 
length of $\Gamma$ and is denoted $\lambda(\Gamma)$. 
The most important facts that we will need are:
\newline 
{\bf Conformal invariance:} if $f:\Omega \to \Omega'$ is 
conformal, $\Gamma$ is a path family in $\Omega$ and 
$\Gamma' =f(\Gamma)$,  
then $M(\Gamma') = M(\Gamma)$.
\newline 
{\bf Quasi-invariance:} If $f:\Omega \to \Omega'$ is 
$K$-quasiconformal, $\Gamma$ is a path family in $\Omega$
and $\Gamma'=f(\Gamma)$,
then $ M(\Gamma)/K \leq M(\Gamma') \leq K  \cdot  M(\Gamma)$.
\newline
{\bf Extension:} If $\Gamma, \Gamma'$ are path families such 
that each path in $\Gamma'$ contains a sub-path in $\Gamma$ 
then $M(\Gamma') \leq M(\Gamma)$.
In particular, if $\Gamma' \subset \Gamma$, then 
$M(\Gamma') \leq M(\Gamma)$.
\newline 
{\bf Parallel Rule:}  If $\Gamma_1, \dots, \Gamma_n$ are 
defined on disjoint open sets, and every $\gamma \in 
\cup_j \Gamma_j$ contains some curve in $\Gamma$ then 
$M(\Gamma ) \geq \sum_j M(\Gamma_j)$.
\newline 
{\bf Round  Annuli:} the modulus of the path family separating the 
two boundary components of the round annulus
 $A(r, R) = \{ z: r < |z| < R\}$ is 
$ (\log R/r)/ 2 \pi$. We call this the modulus of the 
annulus. Every topological annulus $\Omega \subset \complex$
 is conformally 
equivalent to a round annulus, and its modulus is equal 
to the modulus of the corresponding round annulus. 
\newline
{\bf Topological Annuli:} There is a $M_0 < \infty$ so 
that if $\Omega$ is a  topological annulus
with modulus $M \geq M_0$ 
then $\Omega$ contains 
a round annulus of modulus $M' >1$ and $M'$ tends to 
$\infty $ as $M$ tends to $\infty$.  
\newline
{\bf Reciprocity:} the modulus of the path family separating 
the two boundary components of a topological   annulus $\Omega$ is 
the reciprocal of the modulus of the path family 
in $\Omega$ that connects the two boundary components. 
\newline
{\bf Rectangles:} the modulus of the path family 
connecting the sides of length $a$ in a $a \times b $
rectangle is $a/b$.

 Another fact that we shall use repeatedly is:

\begin{lemma} \label{separation lemma} 
Suppose $e,f \subset \complex$ are disjoint Jordan arcs and let 
$\Gamma$ be the family  of closed curves in $\complex \setminus 
(e \cup f)$ that separates them. Let $M$ be the 
modulus of $\Gamma$. Then 
\begin{eqnarray} \label {mod est}
 \dist (e, f) \geq  \epsilon \cdot \min(\diam(e), \diam(f)),
\end{eqnarray} 
where $\epsilon >0$ depends only on a lower bound for $M$.
Conversely,  if (\ref{mod est}) holds, then $M$ is bounded 
away from zero with an estimate depending only on $\epsilon$.  
Moreover, $\epsilon$ tends to infinity  if and only if 
 $M$ tends to infinity.
\end{lemma}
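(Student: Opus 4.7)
The plan is to prove the two implications separately; the quantitative ``iff'' about $\epsilon$ and $M$ growing together falls out of the explicit bounds produced in each step.

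For the direction ``separation controls modulus from below,'' I would first normalize using conformal invariance of modulus so that $\min(\diam e, \diam f) = 1$ and, without loss of generality, $\diam e = 1 \leq \diam f$. The hypothesis then reads $d := \dist(e,f) \geq \epsilon$. When $\epsilon > 1$, pick any $p \in e$; then $e \subset \overline{D(p,1)}$ while $D(p,\epsilon) \cap f = \emptyset$, so the round annulus $\{1 < |z-p| < \epsilon\}$ lies in $\complex \setminus (e \cup f)$ and its two circles separate $e$ from $f$. The Round Annuli rule then gives $M(\Gamma) \geq (\log \epsilon)/(2\pi)$, which already yields the ``$\epsilon \to \infty$ implies $M \to \infty$'' half of the last assertion. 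For $\epsilon$ only known to be bounded below, I would run a compactness argument: after translating so that a fixed point of $e$ sits at the origin, and after truncating $f$ to a large disk around $e$ (an operation that can only decrease the separating modulus, by the Extension rule), the admissible configurations form a precompact family in the Hausdorff topology on compact subsets of the plane. On this family, $M(\Gamma)$ is strictly positive and lower semi-continuous, so it attains a positive infimum $m(\epsilon) > 0$, which is the required bound.

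For the direction ``modulus controls separation from below,'' I apply the Topological Annuli principle from the excerpt: the complement $\Omega = \complex \setminus (e \cup f)$, being a topological annulus of modulus $M(\Gamma)$, contains a round sub-annulus $A(r,R) = \{r < |z-p| < R\}$ whose own modulus $M'$ tends to infinity with $M(\Gamma)$, so $R/r = e^{2\pi M'}$ is large. Such a round sub-annulus must separate the two boundary continua, so one of them (call it $e$) is contained in $\overline{D(p,r)}$ and the other in $\{|z-p| \geq R\}$. The inner continuum has diameter at most $2r$, which forces $\min(\diam e, \diam f) \leq 2r$, while $\dist(e,f) \geq R - r$. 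Therefore
$$\frac{\dist(e,f)}{\min(\diam e,\diam f)} \;\geq\; \frac{R-r}{2r} \;=\; \frac{e^{2\pi M'}-1}{2},$$
which grows without bound as $M(\Gamma) \to \infty$. This simultaneously proves (\ref{mod est}) and the remaining ``$M \to \infty$ implies $\epsilon \to \infty$'' half of the lemma.

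The step I expect to require the most care is the compactness argument for small $\epsilon$ in the first direction. One must verify lower semi-continuity of the separating modulus under Hausdorff limits of configurations (which follows from the admissible-metric characterization together with the Reciprocity rule, once integration domains are controlled) and check that restricting $f$ to a bounded neighborhood of $e$ does not lose information about $M(\Gamma)$, since any curve separating $e$ from $f$ may be replaced by one that only winds around the nearby portion of $f$. Neither point is serious, but both should be written out carefully.
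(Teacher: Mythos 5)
Your handling of the two asymptotic claims (``$\epsilon\to\infty$ iff $M\to\infty$'') is correct and essentially matches the paper. But both of your quantitative directions have problems.

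For the direction giving (\ref{mod est}) from a lower bound on $M$, you rely entirely on the Topological Annuli property, which only produces a round annulus when $M \geq M_0$ for some absolute threshold $M_0$. The lemma asserts that $\epsilon>0$ depends only on a lower bound for $M$, which must include lower bounds below $M_0$; your argument is silent there. The paper instead argues the contrapositive with an explicit admissible metric: if there are $x\in e$, $y\in f$ with $|x-y|\leq \epsilon r$ (where $r=\min(\diam e,\diam f)$), then the logarithmic metric $\rho(z)=\bigl(|z-x|\log\tfrac{2}{\epsilon}\bigr)^{-1}$ on the annulus $\{\epsilon r<|z-x|<r/2\}$ is admissible for $\Gamma$, giving $M\lesssim 1/\log(1/\epsilon)$. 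This bounds $M$ above whenever $\epsilon$ is small, which is the full statement.

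For the converse, your compactness plan has a sign error that undermines the reduction. If $f'\subset f$ is a truncation, then any curve separating $e$ from $f$ also lies in $\complex\setminus(e\cup f')$ and separates $e$ from $f'$, so $\Gamma(e,f)\subset\Gamma(e,f')$ and therefore $M\bigl(\Gamma(e,f)\bigr)\leq M\bigl(\Gamma(e,f')\bigr)$: truncation \emph{increases} the separating modulus, contrary to what you claim. So a lower bound obtained for truncated configurations does not transfer back to the original $f$. Beyond that, the compactness machinery (Hausdorff convergence of Jordan arcs, lower semicontinuity of modulus) is considerably heavier than what is needed. The paper does it in one line: with $r=\min(\diam e,\diam f)=\diam e$, the constant metric $\rho\equiv(\epsilon r)^{-1}$ on the $\epsilon r$-neighborhood of $e$ is admissible for the \emph{connecting} family (any path from $e$ to $f$ must cross that neighborhood, which has length at least $\epsilon r$), its $\rho^2$-integral is at most $\pi(\epsilon r+r)^2/(\epsilon r)^2$, and reciprocity converts this upper bound on the connecting modulus into the desired lower bound on $M(\Gamma)$. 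Notice this argument is insensitive to the size of $f$ and so needs no truncation at all.
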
 

\begin{proof}
This is fairly standard.
Let $r = \min(\diam(e), \diam(f))$. 
If there are points $x \in e$ and 
$y \in f$ with $|x-y| \leq \epsilon $, 
then we define a metric $\rho$ on 
$\{ z:  \epsilon r < |x-z| < r/2\}$ 
by setting $\rho(z) = (|z-x| \log \frac 2 \epsilon)^{-1}$.
It is a standard exercise to show that $\rho$ is 
admissible and integrating $\rho^2$ gives 
$M \leq  ( \log \frac 2 \epsilon)^{-1}$  which 
tends to $0$ with $\epsilon$.  This proves the
first claim. For the other direction, 
suppose 
 $\dist (e, f) \geq  \epsilon r.$
Then setting $\rho(z) = (\epsilon r)^{-1}$ 
on an $\epsilon r$-neighborhood 
of $e$ (if $\diam(e) = r$) or $f$  (otherwise) 
gives an admissible metric for the path 
family connecting $e$ to $f$. Since this 
neighborhood has area at most $\pi (\epsilon r + r)^2$, 
computing the integral 
of $\rho^2$ shows this family has modulus at most 
$$   (\epsilon r)^{-2}   \pi  (\epsilon r + r)^{2} 
\leq     \pi (1+ \epsilon^{-2}).$$
Since this modulus is the reciprocal of the modulus 
of the path family separating $e$ and $f$ we get
a lower bound for the latter modulus in terms of $\epsilon$.
If $M$ is large, then by the topological annuli
property there is a large round annulus separating $e$
and $f$, and hence $\epsilon$ is large. Conversely, if 
$\epsilon$ is large, then  there is  clearly a large round 
annulus separating the curves and so the modulus $M$ is large.
\end{proof}

We will use the following in Section \ref{lower sec}. 

\begin{lemma} \label{two intervals}
If $I,J$ are disjoint intervals on $ \reals$, let
$M(I,J)$ be the modulus of the path family in 
$\uhp  =\{ x +i y : y >0\}$ (the upper half-plane)
  separating $I$ and $J$. If $I,J$ have unit 
length and are distance $r \geq 2$ apart, then $M(I,J) 
\simeq \log r$.
\end{lemma}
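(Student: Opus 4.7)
Plan: The plan is to prove the lemma by an explicit Schwarz--Christoffel computation that reduces $M(I,J)$ to the aspect ratio of a rectangle. After translation, assume $I = [0,1]$ and $J = [1 + r, 2 + r]$, with complementary arcs $M = (1, 1 + r)$ and $O = \reals \setminus [0, 2 + r]$ (the latter passing through $\infty$). Let $\phi : \uhp \to R = [0,a] \times [0,b]$ be the Schwarz--Christoffel map sending $0, 1, 1 + r, 2 + r$ cyclically to the four corners of $R$, chosen so that $I$ and $J$ map to the two horizontal sides of length $a$ and $M$ and $O$ map to the two vertical sides of length $b$. By the conformal invariance of modulus and the Rectangles rule, the modulus $M(I,J)$ of the separating family (paths in $\uhp$ from $M$ to $O$) equals $b/a$, so it suffices to show $b/a \simeq \log r$ for $r \geq 2$.

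From the S--C formula $\phi'(z) = C \, [z(z - 1)(z - 1 - r)(z - 2 - r)]^{-1/2}$, the side lengths are
\[
a = |C| \int_0^1 \frac{dt}{\sqrt{t(1-t)(1+r-t)(2+r-t)}}, \qquad b = |C| \int_1^{1+r} \frac{dt}{\sqrt{t(t-1)(1+r-t)(2+r-t)}}.
\]
For $r \geq 2$ the factor $(1 + r - t)(2 + r - t)$ is $\simeq r^2$ uniformly on $[0,1]$, so $a \simeq (|C|/r) \int_0^1 dt/\sqrt{t(1-t)} = \pi |C|/r$. For $b$, I would substitute $u = t - 1 \in [0, r]$ and split the range as $[0,1] \cup [1, r-1] \cup [r-1, r]$. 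On the middle interval the factors $(1 + u)$ and $(r + 1 - u)$ are comparable to $u$ and $r - u$ respectively, so the integrand is comparable to $1/(u(r - u))$; partial fractions then give $\int_1^{r-1} du/(u(r-u)) = (2/r) \log(r - 1) \simeq (\log r)/r$. The two endpoint regions each contribute $O(1/r)$ by standard $du/\sqrt{u}$ estimates near the branch points. Hence $b \simeq |C| (\log r)/r$ and $b/a \simeq \log r$, as required.

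The main technical point is controlling implicit constants so that the endpoint contributions to $b$ are genuinely dominated by the middle logarithmic term, uniformly in $r \geq 2$; this is routine from direct upper and lower bounds on the integrand. As an independent check of the easier direction, the lower bound $M(I,J) \geq (\log r)/\pi$ also has a clean modulus-rules proof bypassing elliptic integrals: the semicircles $\gamma_s = \{z \in \uhp : |z - 1| = s\}$ for $s \in (1, r)$ all separate $I$ from $J$ (their endpoints $1 - s \in O$ and $1 + s \in M$), and the conformal map $w = \log(z - 1)$ sends $\uhp \cap \{1 < |z - 1| < r\}$ onto the rectangle $(0, \log r) \times (0, \pi)$, carrying this subfamily to the family of vertical segments of height $\pi$, whose modulus $(\log r)/\pi$ is computed by the Rectangles rule and bounds $M(I,J)$ from below.
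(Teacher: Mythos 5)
Your proof is correct and follows essentially the same route as the paper: apply Schwarz--Christoffel to map $\uhp$ onto a rectangle with $I,J$ going to one pair of opposite sides, then estimate the two side lengths by elementary integral bounds and conclude $M(I,J)=b/a\simeq\log r$. The paper uses the normalization $I=[-1,0]$, $J=[r,r+1]$, while you use $I=[0,1]$, $J=[1+r,2+r]$; the integrals are the same up to a translation, and both yield $a\simeq |C|/r$ and $b\simeq |C|(\log r)/r$. Your additional check of the lower bound via the semicircles $\{|z-1|=s\}$ and the conformal map $w=\log(z-1)$ onto $(0,\log r)\times(0,\pi)$ is a correct and somewhat cleaner alternative to one half of the estimate, but the paper's full argument already covers both directions through the same $b/a$ computation.
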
 

\begin{proof}
There are several ways to estimate this modulus, but 
we will use a conformal map. 
Without loss of generality, assume $I=[-1,0], J=[r,r+1]$. 
The Schwarz-Christoffel formula (e.g., 
see \cite{MR1908657} and its references) that says $\uhp$  is conformally 
mapped to an $a \times b$ rectangle  with $I, J$ going to the 
sides of length $a$ by the map 
$$
 f(z) = \int^z \frac {dw} {(w+1)^{1/2} w^{1/2} (w-r)^{1/2}(w-r-1)^{1/2}}. 
$$
Moreover, 
$$a  =
  \int_{-1}^0 \frac {dx} {|x+1|^{1/2} |x|^{1/2} 
            |x-r|^{1/2}|x-r-1|^{1/2}} 
     \simeq 
   \frac 1r \int_{-1}^0 \frac {dx} {|x+1|^{1/2} |x|^{1/2} }
    \simeq \frac 1r
,$$
and  similarly 
\begin{eqnarray*}
b  &=&
  \int_{0}^r \frac {dx} {|x+1|^{1/2} |x|^{1/2} 
            |x-r|^{1/2}|x-r-1|^{1/2}} \\
  &   \simeq & 
   \frac 1r \int_{0}^{r/2} \frac {dx} {|x+1|^{1/2} |x|^{1/2} }\\
  &  \simeq & 
\frac 1r + \frac 1r \int_{1}^{r/2} \frac {dx} {x}  \\
  &  \simeq & \frac 1r  (1+\log   r) .
\end{eqnarray*}
Therefore, by conformal invariance and the rectangle rule, 
$M(I,J) = b/a \simeq 1+\log r $ (and $1+\log r \simeq 
\log r$ since $r \geq 2$). 

\end{proof} 

Other proofs of the lemma are possible. For example, one 
can use a M{\"o}bius transformation  
to map $I$ to $[-1,1]$, map $J$ to the complement 
of $[-y,y]$ for some $y \simeq r$, and then estimate the 
modulus of the planar complement of these using explicit
metrics.

Several times in this paper we will use Koebe's $\frac 14$-theorem 
and its consequences. Koebe's theorem says that if 
$f: \disk \to \Omega$ is conformal (holomorphic and $1$-to-$1$) 
then 
$$ \frac 14 |f'(z)|(1-|z|^2)
   \leq \dist(f(z), \partial \Omega) \leq
  |f'(z)|(1-|z|^2).$$
See Theorem  I.4.3 of \cite{MR2150803}. 
A consequence of this is that if $f$ is conformal 
on a   region $W$ and $E\subset W$ is compact, then 
$|f'|$ is comparable at any two points of $E$ with 
a constant that depends only on $E$ and $W$ (in fact,
it only depends on the diameter of $E$ in the hyperbolic 
metric for $W$).

The image  $\gamma$ of a line under a quasiconformal mapping of the plane 
to itself is called a quasi-line. Such curves $\gamma$ are 
exactly characterized by the three-point condition:  there 
is a $M < \infty$ so that given any 
three points $x,y,z \in \gamma$ with $x,y$ in different 
connected components of $\gamma \setminus \{ z\}$,  
we have $|x-z|\leq M|x-y|$. Equivalently, the subarc of $\gamma$ 
connecting $x$ and $y$ has diameter $O(|x-y|)$. 
We will use this in the following way. 

A quasidisk is the image of $\disk$ under a quasiconformal 
map of the plane. Abusing notation slightly, we will 
say 
$\Omega$ is an unbounded quasidisk if it the image of 
a half-plane under a quasiconformal map of the plane
(this sounds better than ``quasi-half-plane'', and
would be technically correct if we simply considered 
quasiconformal maps of the Riemann sphere to itself, rather 
than just maps that fix $\infty$).

\begin{lemma}  \label{unbounded quasidisk} 
Suppose $\Omega$  an unbounded quasidisk.
Then there is a $C < \infty$ so that 
given any $x \in \partial \Omega$, there is a curve $\gamma$ 
in $\Omega$ that connects $x$ to $\infty$ and satisfies 
$$ \dist(z,\partial \Omega) \geq   |z-x|/C ,$$
for every $z \in \gamma$.  If $|z|\geq 2|x|$, then 
$ \dist(z,\partial \Omega) \geq   |z|/(2C) ,$
for every $z \in \gamma$.  
\end{lemma}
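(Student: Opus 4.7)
The strategy is to pull the problem back to the half-plane via the defining quasiconformal map, use the vertical ray from the base point (for which the estimate is trivial), and then push forward, appealing to the fact that a planar quasiconformal homeomorphism is globally quasisymmetric. By assumption there is a $K$-quasiconformal $\Phi:\complex\to\complex$ with $\Phi(\uhp)=\Omega$ (and $\Phi(\infty)=\infty$). Given $x\in\partial\Omega$, I would let $x_0=\Phi^{-1}(x)\in\reals$ and set $\gamma_0=\{x_0+it:t\geq 0\}$. The image $\gamma=\Phi(\gamma_0)$ is then a curve in $\overline\Omega$ from $x$ to $\infty$, lying in $\Omega$ except at the endpoint $x$.

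Fix $z_0=x_0+it\in\gamma_0$ with $t>0$ and let $w_0\in\reals$ be arbitrary. Since $\gamma_0$ meets $\reals$ perpendicularly at $x_0$, the trivial bound
\[
 \frac{|z_0-x_0|}{|z_0-w_0|}=\frac{t}{|z_0-w_0|}\leq 1
\]
holds. Now a $K$-quasiconformal homeomorphism of the plane is $\eta$-quasisymmetric, with $\eta$ depending only on $K$ (a standard consequence of Mori's theorem combined with the compactness of normalized $K$-quasiconformal maps; see, e.g., \cite{MR2241787}). Applying quasisymmetry to the triple $(z_0,x_0,w_0)$ gives
\[
 \frac{|z-x|}{|z-w|}\leq\eta\left(\frac{|z_0-x_0|}{|z_0-w_0|}\right)\leq\eta(1)=:C,
\]
where $z=\Phi(z_0)\in\gamma$ and $w=\Phi(w_0)\in\partial\Omega$. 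Rearranging yields $|z-w|\geq|z-x|/C$, and since $w_0\in\reals$ is arbitrary, the infimum over $w\in\partial\Omega$ produces the first estimate $\dist(z,\partial\Omega)\geq|z-x|/C$.

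The second estimate is then a one-line triangle inequality: if $|z|\geq 2|x|$, then $|z-x|\geq|z|-|x|\geq|z|/2$, so $\dist(z,\partial\Omega)\geq|z|/(2C)$. The only mildly subtle point in the whole argument is the appeal to global quasisymmetry, which uses that $\Phi$ is quasiconformal on all of $\complex$ (not merely on $\uhp$); this is exactly where the hypothesis ``unbounded quasidisk'' (as opposed to the image of $\uhp$ under a map defined only on $\uhp$) is used. If one prefers to avoid quasisymmetry and remain within the modulus-based toolkit already set up for the paper, one can instead estimate the modulus of the path family in $\uhp$ separating an initial sub-arc of $\gamma_0$ from a far-away sub-arc of $\reals$, transport this modulus through $\Phi$ by quasi-invariance, and then extract the distance bound from Lemma \ref{separation lemma}; this route is longer but uses only tools already displayed in this section.
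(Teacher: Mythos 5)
Your proof is correct but takes a genuinely different route from the paper's. The paper does not invoke global quasisymmetry. Instead, writing $\Omega = f(\rhp)$ with $x = f(0)$ and $\gamma = f(\reals^+)$, it introduces an auxiliary quarter-plane $Q \subset \rhp$ (itself the image of $\rhp$ under a quasiconformal map of the plane, so $f(Q)$ is again an unbounded quasidisk) and applies the three-point (bounded-turning) condition to the quasi-line $f(\partial Q)$: if some $f(s) \in \gamma$ were within $\epsilon |f(s) - f(0)|$ of a boundary point $f(it)$ with $s, t > 0$, the subarc of $f(\partial Q)$ joining $f(s)$ to $f(it)$ would have diameter $O(\epsilon |f(s) - f(0)|)$ yet contain both $f(0)$ and $f(s)$, forcing $\epsilon \gtrsim 1$; the same argument in the fourth quadrant handles $t < 0$. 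Your route replaces this two-step construction (auxiliary quarter-plane plus bounded turning, with a case split over the two half-axes) with a single application of quasisymmetry to the triple $(z_0, x_0, w_0)$, using the elementary geometric fact that $|z_0 - x_0| \leq |z_0 - w_0|$ when $z_0$ lies on the vertical ray through $x_0$ and $w_0 \in \reals$. This is shorter and avoids the case analysis entirely. Both arguments genuinely exploit that the quasiconformal map is defined on all of $\complex$ rather than just on the half-plane: the paper needs it so that $f(\partial Q)$ is a quasi-line, you need it for global $\eta$-quasisymmetry with $\eta$ depending only on $K$. Since both facts are standard and both are stated in the paper's Section \ref{review modulus}, neither approach is more elementary; yours is simply more streamlined. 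Your fallback suggestion (a modulus estimate combined with Lemma \ref{separation lemma}) would also work and is closest in spirit to the techniques used elsewhere in that section, but is not what the paper actually does for this lemma.
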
 

\begin{proof}
Suppose $\Omega = f(\rhp)$. Without loss of generality 
we may assume $x = f(0)$.  The right half-plane  can easily 
be quasiconformally mapped to a quarter-plane by a 
quasiconformal  map $g$ of the plane  
(leave radii fixed and
contract angles by a factor of two in one half-plane 
and expand them by a factor of $3/2$ in the remaining half-plane; 
we leave the details to the reader). Thus if 
$\Omega_1 \subset \Omega$  is
the image of the first quadrant under $f$, then it is also an unbounded 
quasidisk, and hence  $\partial \Omega_1$  satisfies the three point 
condition with some constant $C$.

 Suppose that there  was a  point on $\gamma= f(\reals^+)$
that was ``too close'' to $f(i\reals^+) \subset \partial \Omega$, 
i.e.,   suppose there were 
$s,t > 0$ so that
$$ |f(s) - f(it)| < \epsilon |f(s)-f(0)|. $$
Then the arc  of $\partial \Omega_1$ connecting 
$f(s)$ and $f(t)$ must have diameter $\leq C \epsilon |f(s)-f(0)|$
by the three-point condition, but it contains both 
$x= f(0)$ and $f(s)$ so it has diameter at least 
$|f(s)- f(0)|$. Thus $C \epsilon \geq 1$.  The same 
argument applies to the image of the fourth quadrant and the 
negative imaginary axis, and this proves the first part 
of the  lemma. The final claim  follows easily.
\end{proof}

Mori's theorem states that $K$-quasiconformal maps 
of the plane are bi-H{\"o}lder, i.e.,
$$  \frac 1{C|z-w|^\alpha} \leq   |f(z)-f(w)| \leq  C|z-w|^\alpha,$$
where $\alpha$ depends only on $K$.
Quasiconformal maps of the plane are also quasisymmetric:
there is a homeomorphism  $\eta$ from $[0, \infty)$ to  
itself such that $|x-y| \leq t|a-b|$ implies
 $|f(x)-f(y)|\leq \eta (t)|f(x)-f(y)|$. 
See \cite{MR1800917} and its references.

\begin{lemma} \label{QC edge nbhds}
Given a Jordan arc $\gamma \subset \complex$ define 
$$ \gamma(r) =\{ z \in \complex :  
\dist(z,\gamma) \leq r \cdot \diam( \gamma) \} .$$
If $f$ is a $K$-quasiconformal map of the plane to 
itself, then there are $ 0< s < t < \infty$ depending 
only on $r$ and $K$ so that if $\sigma=f(\gamma) $ then  
$$ \sigma(s)\subset f( \gamma(r))  \subset  \sigma(t).$$ 
\end{lemma}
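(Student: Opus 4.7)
The plan is to reduce the lemma to the quasisymmetry of $f$ and $f^{-1}$, both of which are $K$-quasiconformal maps of the plane, using the quasisymmetry function $\eta$ mentioned just before the statement. Concretely, I will choose the constants to be $t = \eta(2r)$ and $s = \frac{1}{2}\eta^{-1}(r)$ (using $\eta$ for $f$ in one direction and the analogous function for $f^{-1}$ in the other), and verify both inclusions separately.

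For the inclusion $f(\gamma(r)) \subset \sigma(t)$, I would start with $z \in \gamma(r)$ and pick $w \in \gamma$ realizing $|z-w| \leq r \cdot \diam(\gamma)$. The key trick is to choose reference points $a,b \in \gamma$ with $|a-b| = \diam(\gamma)$ (extreme points exist because $\gamma$ is compact); by the triangle inequality applied to $a,w,b$, at least one of $|w-a|$ and $|w-b|$ is $\geq \diam(\gamma)/2$. Say it is $|w-a|$. Then
\[
\frac{|z-w|}{|w-a|} \leq 2r,
\]
and quasisymmetry of $f$ gives
\[
|f(z)-f(w)| \leq \eta(2r)\,|f(w)-f(a)| \leq \eta(2r)\,\diam(\sigma).
\]
Since $f(w) \in \sigma$, this yields $\dist(f(z),\sigma) \leq \eta(2r)\,\diam(\sigma)$, so $f(z) \in \sigma(\eta(2r))$.

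For the inclusion $\sigma(s) \subset f(\gamma(r))$, I would run the identical argument with $f^{-1}$, which is also $K$-quasiconformal and hence $\eta'$-quasisymmetric for some $\eta'$ depending only on $K$. Given $y \in \sigma(s)$, pick $v \in \sigma$ with $|y-v| \leq s\,\diam(\sigma)$, and extreme points $a',b' \in \sigma$ with $|a'-b'| = \diam(\sigma)$. Choosing (say) $|v-a'| \geq \diam(\sigma)/2$, quasisymmetry of $f^{-1}$ gives
\[
|f^{-1}(y)-f^{-1}(v)| \leq \eta'(2s)\,|f^{-1}(v)-f^{-1}(a')| \leq \eta'(2s)\,\diam(\gamma).
\]
Choosing $s > 0$ small enough that $\eta'(2s) \leq r$ (possible since $\eta'$ is a homeomorphism of $[0,\infty)$ fixing $0$) then places $f^{-1}(y)$ in $\gamma(r)$, i.e., $y \in f(\gamma(r))$. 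Both constants depend only on $r$ and $K$, as required.

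There is no genuine obstacle: the only point that requires a moment's care is that the quasisymmetry estimate requires a reference point whose distance from the base point is comparable to the diameter of the set, and this is precisely what picking the diameter-realizing pair $a,b$ (resp.\ $a',b'$) and invoking the triangle inequality provides. Everything else is bookkeeping, and the constants $s,t$ depend only on $r$ and on the quasisymmetry functions for $f$ and $f^{-1}$, which in turn depend only on $K$.
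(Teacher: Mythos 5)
Your proof is correct, and it takes a genuinely different route from the paper's. The paper argues via the modulus of path families: for $\sigma(s)\subset f(\gamma(r))$ it uses the constant metric $1/r$ on $\gamma(r)$ to bound from above the modulus of the path family connecting $\gamma$ to $\partial\gamma(r)$, then invokes reciprocity and quasi-invariance to get a lower bound on the separating modulus for $\sigma$ and $f(\partial\gamma(r))$, and finally converts that to a distance bound; for $f(\gamma(r))\subset\sigma(t)$ it observes that a large separating modulus forces a large round annulus around $\gamma$ in the preimage, which must swallow $\gamma(r)$. You instead invoke the quasisymmetry of $f$ and $f^{-1}$ directly and keep everything at the level of three-point distance ratios, with the only geometric input being the elementary observation that a diameter-realizing pair on a compact arc provides a reference point at distance $\geq\diam(\gamma)/2$ from any given point. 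Your approach is shorter and produces explicit constants ($t=\eta(2r)$, $s$ with $\eta'(2s)\leq r$), at the cost of citing the nontrivial fact that global $K$-quasiconformal maps of the plane are $\eta$-quasisymmetric with $\eta$ depending only on $K$; the paper's modulus argument is more self-contained given the preceding material on modulus, and is really the same theorem in disguise, since the standard proof of quasisymmetry of global quasiconformal maps goes through exactly such modulus estimates. Two trivial points worth tidying in your write-up: the constant in your opening sentence should be $s=\tfrac12(\eta')^{-1}(r)$ (with $\eta'$ the quasisymmetry modulus of $f^{-1}$, not $\eta$), and you should note that one may shrink $s$ further if needed to guarantee $s<t$, since shrinking $s$ only strengthens the inclusion $\sigma(s)\subset f(\gamma(r))$.
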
 

\begin{proof}
Without loss of generality we may assume 
$\diam(\gamma) = \diam(\sigma) =1$.
Taking the metric $\rho =1/r$ on $\gamma(r)$ we see that
the modulus of the path family connecting $\gamma$
to $\partial \gamma(r)$ is bounded above by 
$$ \frac {\area(\gamma(r))}{r^2} \leq \frac {\pi(1+r)^2}{r^2}
   = \pi ( 1+ \frac 1{r^2}).$$
Hence the modulus of the path family separating 
$\gamma$ and $\partial \gamma(r)$ is bounded below by the 
reciprocal of this upper bound.
Thus the  $f$-image of this family therefore also has modulus 
bounded below (by quasi-invariance) and thus  the 
distance between $\sigma$ 
and $f(\partial \gamma(r))$ is  bounded 
below by a constant $s$  times $\diam (\sigma)$. This gives 
the left-hand inclusion of the lemma.

The other inclusion is easier. If $t$ is large then the 
modulus of the path family surrounding $\sigma$ in 
$\sigma(t)$ is also large and hence its pre-image under 
$f$ is also large. This means that the pre-image contains a large 
round annulus that surround $\gamma$ and hence contains 
$\gamma(r)$ if $t$ is large enough compared to $r$ (depending 
on $K$).  
\end{proof}

We shall also use the following well known result of
Teichm{\"u}ller, Wittich,  Belinski\u\i\  and Lehto
(e.g., 
\cite{MR869798}, 
Theorem 7.3.1 of 
\cite{MR2435270},  
 \cite{MR0125963},  
\cite{Teichmuller}).

\begin{thm} \label{TWB thm} 
Suppose $\varphi: \complex \to \complex$ is  $K$-quasiconformal 
with dilatation $\mu$ and 
$$ \iint_{|z|>R} |\mu(z)| \frac {dxdy}{|z|^2} < \infty,$$
for some $R < \infty$. Then there is a non-zero, finite 
complex constant $A$ so that $\varphi(z)/Az \to 1$ as 
$|z| \to \infty$.
\end{thm}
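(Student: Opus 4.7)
The plan is to reduce the problem to an $L^1$-dilatation problem on a half-cylinder by taking logarithms. Set $\zeta = \log z$ and $h(\zeta) = \log \varphi(e^\zeta)$, choosing a continuous branch on the half-cylinder $C = \{\zeta : \text{Re}\,\zeta > \log R\}/(2\pi i \integers)$. Then $h$ is $K$-quasiconformal, and because $z = e^\zeta$ is conformal, the dilatation of $h$ satisfies $|\tilde \mu(\zeta)| = |\mu(e^\zeta)|$. The Jacobian of $z = e^\zeta$ is $|z|^2$, so the hypothesis transforms to $\iint_C |\tilde \mu|\,d\xi\,d\eta < \infty$, while the desired conclusion $\varphi(z)/Az \to 1$ becomes $h(\zeta) - \zeta \to \log A$ as $\text{Re}\,\zeta \to \infty$.

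Next I would use the Beltrami equation $h_{\bar\zeta} = \tilde \mu\, h_\zeta$. Since $h$ is $K$-quasiconformal and $C$ is mapped roughly into itself, $|h_\zeta|$ is locally uniformly bounded on $C$ (by normal-family compactness of $K$-quasiconformal self-maps of a cylinder fixing a boundary circle, together with Koebe-type derivative estimates of Section \ref{review modulus}). Hence $g = h - \zeta$ has $g_{\bar\zeta} = \tilde \mu\, h_\zeta \in L^1(C)$. Conformally identifying $C$ with a punctured disk $\disk \setminus \{0\}$ (sending $\text{Re}\,\zeta = +\infty$ to $0$), I would represent $g$ as the sum of a bounded holomorphic function on $\disk \setminus \{0\}$, determined by boundary values on $\partial \disk$, plus the Cauchy transform of $g_{\bar\zeta}$. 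Absolute integrability of $g_{\bar\zeta}$ forces the Cauchy transform to extend continuously to $0$ with limit $0$, while Riemann's removable-singularity theorem applied to the bounded holomorphic piece yields a finite limit which we call $\log A$. Putting the pieces together gives $h(\zeta) - \zeta \to \log A$, equivalently $\varphi(z)/Az \to 1$.

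The hard part is the last step: converting $L^1$-integrability of $g_{\bar\zeta}$ into pointwise $o(1)$ decay at the puncture. The difficulty is that one needs global rather than purely local control of $h_\zeta$ so that the Cauchy integral really tends to zero. My preferred workaround is a two-stage argument. First, apply the compactness of $K$-quasiconformal families to the rescalings $\varphi_n(z) = \varphi(R_n z)/\varphi(R_n)$ for a dyadic sequence $R_n = 2^n$; a Teichm\"uller--Wittich type modulus estimate on the shells $\{R_n < |z| < 2R_n\}$ (where $\iint |\mu|/|z|^2 \to 0$ by hypothesis) forces any subsequential limit to be $1$-quasiconformal, hence a linear map $z \mapsto cz$ with $c \neq 0$. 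Second, use the global $L^1$-control of $\tilde \mu$ to show that $c$ does not depend on the subsequence and that convergence is not merely subsequential, so $A = c$ is well-defined and the limit holds along every sequence tending to infinity.
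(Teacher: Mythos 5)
The paper does not prove this theorem; it is the classical Teichm\"uller--Wittich--Belinski\u\i\ theorem, stated here as background and attributed to the literature (Lehto, Belinski\u\i, Teichm\"uller, Goldberg--Ostrovskii). So there is no proof in the paper against which to compare your argument, and it must be judged on its own.

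Your logarithmic change of variables and the reformulation $h(\zeta)-\zeta\to\log A$ are correct and standard. The gap is in the central claim: \emph{``Absolute integrability of $g_{\bar\zeta}$ forces the Cauchy transform to extend continuously to $0$ with limit $0$.''} This is false in general. The Cauchy transform of an $L^1$ density lies only in weak-$L^1$ (it can blow up near any prescribed point), so $L^1$-integrability of $g_{\bar\zeta}$ alone gives no pointwise control at the puncture. To make a Cauchy-transform argument work you need two things simultaneously: (i) the weighted bound $\iint |g_{\bar\zeta}(\zeta)|\,d\xi\,d\eta<\infty$ coming from the hypothesis, \emph{and} (ii) a pointwise bound $|g_{\bar\zeta}|\leq k\,|h_\zeta|$ with $|h_\zeta|$ \emph{uniformly} (not just locally uniformly) bounded on the whole half-cylinder. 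You assert local uniform boundedness; upgrading to a global bound requires a translation/rescaling argument on the cylinder that you do not supply. Even granting both (i) and (ii), the continuity of the Cauchy transform at the puncture is a genuine estimate --- one must split the integral $\iint \nu(w)\bigl(\tfrac{1}{w-w_0}-\tfrac 1w\bigr)\,du\,dv$ into an annulus $|w|\asymp|w_0|$ handled by the pointwise bound and the complementary region handled by the weighted integrability, and the annulus contribution is $O(1)$ rather than $o(1)$ unless one interpolates carefully between the two bounds. None of this is in your write-up, and the sentence as written is an error, not a gap.

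Your fallback ``two-stage'' rescaling argument is closer to the classical proof and its first stage is sound: the dyadic rescalings $\varphi(R_n z)/\varphi(R_n)$ form a normal family whose dilatations tend to $0$ in $L^1$ of each fixed annulus, so every subsequential limit is a linear map. But the second stage --- that the constants $c$ agree across subsequences and the convergence is along the full family --- is precisely the content of the theorem, and the sketch does not explain how the hypothesis $\iint_{|z|>R}|\mu|\,dxdy/|z|^2<\infty$ makes $\log|\varphi(R_n)|-\log R_n$ a Cauchy sequence. The classical route does this with a quantitative modulus estimate: the ratio of the moduli of $\varphi(A(r,R))$ and $A(r,R)$ deviates from $1$ by a quantity controlled by $\iint_{r<|z|<R}|\mu|/|z|^2$, and the summability of these deviations over dyadic scales is what produces a finite nonzero limit. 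That estimate, or an equivalent of it, must appear explicitly.
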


\section{Quasiconformal folding }   \label{QC folding} 

In this section,
 we review notation and results from \cite{Bishop-classS}.
Recall that  $\classS_{2,0} \subset \classS$ is
 the sub-collection of 
 Speiser class functions that have $2$ critical
 values and no finite asymptotic values. 
We  will start   by describing 
how an element of $\classS_{2,0}$ gives rise to a locally 
finite,  infinite planar tree; we then describe how to start with 
such a  tree (satisfying some geometric 
regularity conditions) and obtain an element of $\classS_{2,0}$.
This construction is the  main 
result of \cite{Bishop-classS}, and contains much of 
the work needed to prove Theorem \ref{App Omega}.

Suppose $f \in \classS_{2,0}$ and that the critical values 
of $f$ are exactly $\{ -1, 1\}$. Let $T = f^{-1}([-1,1])$. 
Let $U = \complex \setminus [-1,1]$ and let $\Omega= f^{-1}(U)$.
Then each component of $\Omega$ is simply connected and 
$f$ acts as a covering map from each component of $\Omega$
to $U$. The boundary of $\Omega$ is an infinite tree where 
the vertices are the  pre-images of $\{ -1,1\}$.
For each  connected component  of $\Omega$ there is 
  a  conformal map 
$\tau $ to $\rhp$  so that $f = \cosh \circ \, \tau$. 
The edges of $\partial \Omega$ are mapped to intervals 
of length $\pi$ on $\partial \rhp$.
See Figure \ref{Tracts5}.

\begin{figure}[htb]
\centerline{
\includegraphics[height=2.5in]{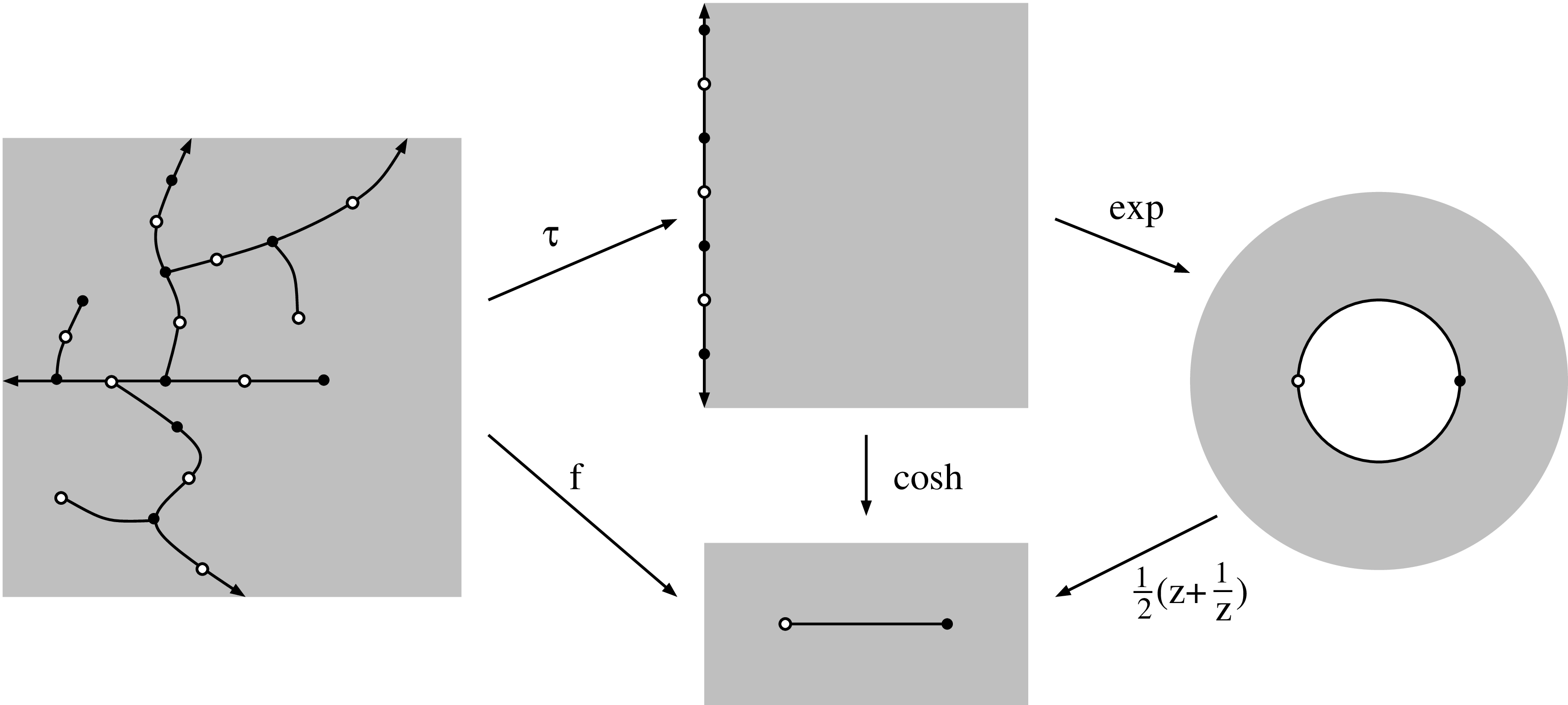}
}
\caption{ \label{Tracts5}
A function with two critical values at $\{ -1, 1\}$ and 
no finite asymptotic values. $T=f^{-1}([-1,1])$ is a tree
with vertices  mapping to  $\pm 1$ (shown as black and white dots).
 $\tau$ is a conformal map 
from each complementary component of $T$ to the right half-plane
and $f = \cosh \circ \tau$.
}
\end{figure}

Given $r >0$ and an edge $e$ on $\partial \Omega$
 we define a neighborhood 
$$ e(r) =  \{ z: \dist(z, e) <  r \cdot\diam(e) \} ,$$
and define a neighborhood of $T=\partial \Omega$ by taking 
the union over all edges. This neighborhood will 
be denoted $T(r)$.
See Figure \ref{Nbhd3}.

\begin{figure}[htb]
\centerline{
\includegraphics[height=2.5in]{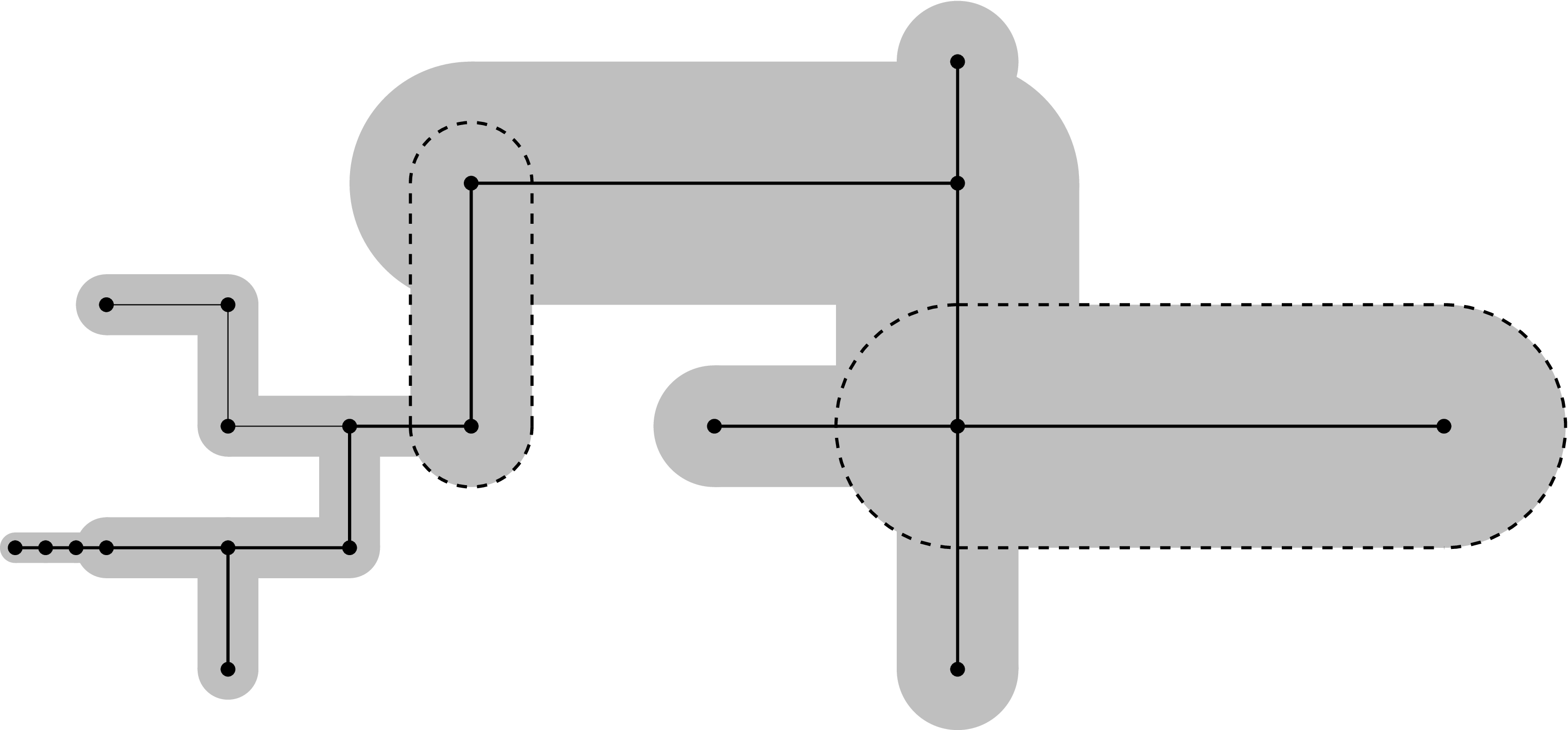}
}
\caption{ \label{Nbhd3}
The neighborhood $T(r)$ of a tree (a finite tree is shown, 
but the definition also makes sense for infinite trees and graphs).
The dashed regions show $e(r)$ for two edges.
}
\end{figure}

Now suppose we start with an 
 infinite planar  tree $T$  and 
a holomorphic map $\tau: \Omega \to \rhp$ where 
$\Omega = \complex \setminus T$ and  $\tau$ is 
conformal from each connected component $\Omega_j$ of 
$\Omega$ to $\rhp$. We  want to 
construct an $f \in {\classS}_{2,0}$
 so that $T $ approximates   $ f^{-1}([-1,1])$ and
$f $ approximates $ e^\tau$ away from $T$.
There are two basic conditions that we impose.

{\bf  (I) Bounded geometry:} this holds for $T$ if:
\begin{enumerate}
\item every edge is $C^2$ with uniform bounds on the derivatives.
\item edges meet at angles uniformly bounded away from zero.
\item any two adjacent edges have uniformly comparable lengths and their 
       union is uniformly  quasiconvex.
\item non-adjacent edges $e,f$ satisfy $\dist(e,f)/\diam(e)  
> \epsilon$ with a uniform $\epsilon >0$.
\end{enumerate} 
Here ``quasi-convex''  means that  the  arc-length distance 
between two points  $x,y$ on the curve is $O(|x-y|)$.
Note that condition (2) implies that the vertex degrees
of $T$ are uniformly bounded. 
A very  useful alternate version of (4)  comes 
from Lemma \ref{separation lemma}:
(4)  holds iff any two non-adjacent edges of $T$ are 
separated by a path family with modulus bounded 
uniformly away from zero. Because of the conformal invariance 
of modulus,  this allows us to easily verify that
under certain conditions,  conformal 
images of bounded geometry trees still have bounded geometry.
See Section \ref{build sec}. 

Later, we will also consider a bounded geometry ``forest'' 
$G$ that
is a  disjoint union of bounded geometry trees, 
where (1)-(3) hold for all edges in the forest and 
(4) holds  for all pairs of non-adjacent edges in $G$
(either from the same or from  different  
components of $G$).

Each edge $e$ of the planar tree $T$ has two sides and each 
side may be considered as a boundary arc of one 
of the complementary components $\Omega_j$  of $T$ (possibly 
both sides belong to the same component).
Conversely, the boundary of each component $\Omega_j$
is partitioned into arcs by the sides of the tree 
$T$.
 We say that two sides of $T$ are  adjacent 
if they are sides of adjacent edges of $T$ 
that are on 
the boundary of the same complementary component $\Omega_j$
and  the two sides 
correspond to adjacent intervals after conformally 
mapping $\Omega_j$ to $\rhp$. 
Two sides of $T$ can also be adjacent if they are 
opposite sides of a single edge of $T$ that 
has an endpoint of degree 1.

 When $\Omega_j$ is mapped to $\rhp$ by $\tau_j$ 
the sides of $T$  map to intervals on $\partial \rhp$.
The Euclidean length of the image 
interval is called the $\tau$-length of the corresponding 
side of $T$.  The collection of resulting intervals on 
$\partial \rhp$ form a partition, denoted ${\cal P}_j$,
 of this line.

For us, a partition of a line is a locally finite 
collection of disjoint 
open intervals whose closures cover the whole line. The 
endpoints  of the partition intervals form a countable, 
discrete set that accumulates only at $\infty$. We say that 
a partition has {\bf bounded geometry } if adjacent 
elements (i.e., partition intervals that share an endpoint)
have comparable lengths with a constant that is independent
of the intervals.  The bounded geometry constant of 
the partition is the supremum $|I|/|J|$ over all adjacent 
pairs of intervals. Occasionally we will also consider 
bounded geometry partitions of  bounded open segments or arcs 
that are defined in  the same way (adjacent  partition 
intervals have comparable lengths and accumulate only  
at the endpoints). 

If  the infinite tree 
$T$ has bounded geometry  then the partitions ${\cal P}_j$
of $\partial \rhp$,
corresponding to each complementary component $\Omega_j$,
also have bounded geometry,
 with a constant depending only on the bounds in the definition 
of bounded geometry (see Lemma 4.1 of \cite{Bishop-classS}).
In other words, if $T$ has bounded geometry, then adjacent 
sides of $T$ have comparable $\tau$-length.
This fact is the main way that we utilize the bounded
geometry assumption.

{\bf (II) The $\tau$-length   lower bound:}
The second condition  we require
 is  that  every side of $T$ has  
$\tau$-length $\geq \pi$ 
 (but no upper bound is assumed).

An  apparently weaker  form of this  is 
to simply require that  for each complementary 
component $\Omega_j$ of $T$, there is a strictly positive 
 lower bound  $\epsilon_j >0$ for 
the length of every interval in the partition ${\cal P}_j$.
If this weaker condition  holds, then on each 
component $\Omega_j$ of the model domain $\Omega$,
we can replace $\tau_j$ by 
a positive multiple of itself, namely  $ (\pi/\epsilon_j)
 \cdot  \tau_j$. 
This is still a 
conformal map of $\Omega_j$ to the right half-plane 
but now every partition arc on $\partial \rhp$ has 
length $ \geq \pi$. Thus if  each tract has a
positive  $\tau$-length   lower bound, we can easily 
choose a new model function for which it satisfies 
the stronger $\geq \pi$ bound. Therefore, 
in most cases, we only need to check the weaker condition.
Note that having a  positive  $\tau$-length lower bound 
 is a geometric  property of each   tract in $\Omega$; 
if each tract has such a lower bound, then
having a positive lower bound that works simultaneously 
for all the  tract depends on  the particular choice of
model function $ F = e^\tau$.

The following is the main result from \cite{Bishop-classS}.

\begin{thm}  \label{Exists}
Suppose $(T, \tau)$ 
has bounded geometry and 
 every side of $T$ has  $\tau$-length at least  $\pi$.
 Then there is a $f \in \classS_{2,0}$,  $r >0$  
and a quasiconformal $\varphi : \reals^2 \to \reals^2$
 so that $f \circ \varphi = \cosh \circ
\tau$ off $T(r)$. In addition, 
\begin{enumerate}
\item   the map $\varphi$ is conformal off $T(r)$,
\item   the function $f$ has only critical values $\pm 1$,  and no 
       finite asymptotic values,
\item    the number $r$ and  and the maximal dilatation
      $K$ of $\varphi$
       only depend on the bounded geometry constants  of $T$,  
\item the  degree of any critical point of $f$ is bounded
      by $4D$, where $D$ is the maximum degree of
    vertices in $T$.
\end{enumerate}
\end{thm}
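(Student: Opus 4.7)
The plan is to build $f$ by first constructing a quasiregular map $g : \complex \to \complex$ that equals $\cosh \circ \tau$ off a thin neighborhood $T(r)$ of $T$ and has all critical values in $\{\pm 1\}$, and then straightening $g$ to an entire function via the measurable Riemann mapping theorem: find a quasiconformal $\varphi$ solving $\varphi_{\bar z} = \mu \, \varphi_z$ for the Beltrami coefficient $\mu$ of $g$, and set $f = g \circ \varphi^{-1}$.

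As a naive first attempt I consider $\sigma = \cosh \circ \tau$ on $\Omega = \complex \setminus T$. Since $\tau_j$ sends each side of an edge of $T$ into $\partial \rhp = i\reals$ and $\cosh(iy) = \cos y \in [-1,1]$, the limits from each side land in $[-1,1]$; but $\sigma$ does not extend continuously across $T$, because on a common edge the vertex-induced partitions of the two sides, transported to $\partial \rhp$ by the two adjacent $\tau_j$'s, generally have different subdivision points. The heart of the construction is a quasiconformal folding near $T$ that repairs this mismatch.

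The folding proceeds as follows. For each component $\Omega_j$ refine the natural $\tau_j$-partition of $\partial \rhp$ by subdividing each interval (of length $\geq \pi$) into sub-intervals with lengths as close to $\pi$ as possible, so that $\cosh$ sends each sub-interval monotonically onto $[-1,1]$. By bounded geometry, adjacent partition intervals -- and therefore their refinements on the two sides of any single edge -- have comparable lengths and comparable numbers of sub-intervals. I then build quasiconformal maps supported in $T(r)$ that slide the subdivision points on each side of each edge into agreement, with dilatation controlled uniformly by the bounded-geometry constants; disjoint edges receive disjoint folding maps, and the uniform bounds on vertex angles and edge-length ratios ensure these local maps glue consistently at each vertex.

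After folding, the modified $\sigma$ extends continuously to a quasiregular $g : \complex \to \complex$ that equals $\cosh \circ \tau$ outside $T(r)$, sends $T$ into $[-1,1]$, and has only the critical values $\pm 1$ inherited from $\cosh$. The local valence at a vertex is bounded by the number of sub-intervals meeting there, which is at most $4D$ after refinement, yielding (4). Applying the measurable Riemann mapping theorem to $\mu = g_{\bar z}/g_z$ (supported in $T(r)$, with $\|\mu\|_\infty < 1$ controlled uniformly) produces a quasiconformal $\varphi$ so that $f = g \circ \varphi^{-1}$ is entire; $\varphi$ is conformal where $\mu \equiv 0$, i.e.\ off $T(r)$. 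The critical values of $f$ are those of $g$, namely $\pm 1$; there are no finite asymptotic values because each tract is a proper cover of $\complex \setminus [-1,1]$ and folding introduces no escaping preimages of any finite point. The main obstacle is the folding step itself: the local adjustments on the two sides of every edge must be simultaneously compatible, consistent at every vertex, and have uniformly controlled dilatation. The $\tau$-length $\geq \pi$ hypothesis is exactly what gives each side of every edge enough room to absorb the adjustment inside $T(r)$ while keeping $\cosh$ monotone on each sub-interval, and the bounded-geometry condition is exactly what makes the two sides comparable enough that the sliding can be realized with uniformly bounded dilatation.
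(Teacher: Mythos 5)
This theorem is not actually proved in the paper you are reading; the paper states explicitly that it is ``essentially Theorem~1.1 of \cite{Bishop-classS}'' and that conclusions (1)--(3) appear verbatim there, while (4) is extracted from the construction in that reference. So a blind ``proof'' of this statement is really a blind reconstruction of the folding theorem itself, which is a substantial paper on its own.

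Your overall strategy is correct and matches the cited construction: build a quasiregular $g$ that agrees with $\cosh\circ\tau$ off $T(r)$ and has critical values $\pm 1$, then straighten $g$ by the measurable Riemann mapping theorem to get $f$ with $\varphi$ conformal off the support of the dilatation. You also correctly locate the crux: the two $\tau_j$-partitions of a shared edge need not agree, and the $\tau$-length $\geq\pi$ and bounded-geometry hypotheses are what make a uniformly quasiconformal repair possible. But the step you describe as ``slide the subdivision points on each side of each edge into agreement'' is not a construction that can work, and this is precisely the nontrivial content of the folding theorem. After refining each $\tau_j$-interval into pieces of length close to $\pi$, the two sides of a single edge generally carry \emph{different numbers} of pieces --- the lengths are comparable, so the counts are comparable, but they need not be equal and can even differ in parity. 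No sliding homeomorphism can turn a partition into $m$ pieces into one with $n\neq m$ pieces, so the map you propose does not extend continuously across $T$, let alone quasiconformally with bounded dilatation. The resolution in \cite{Bishop-classS} is not sliding but \emph{folding}: one attaches small finite trees to vertices of $T$ (producing a modified tree $T'$) and builds a quasiregular map on a neighborhood of $T'$ that wraps one side of the tree over the other, introducing new branch points. This is also where the bound in (4) really comes from: it is not a count of sub-intervals meeting at a vertex, but the graph degree of the attached trees. Each attached tree has maximum degree $4$ and degree $3$ at the attaching vertex, and at most $\deg(v)\leq D$ of them can be attached to any vertex $v$ of $T$, giving $4D$. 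Your assertion ``the local valence at a vertex is bounded by the number of sub-intervals meeting there, which is at most $4D$ after refinement'' does not produce a bound of the form $4D$, and the quantity you are counting is not the local valence of the folded map anyway. In short: the skeleton of your argument is right, but the one step you flag as ``the main obstacle'' is exactly where the argument you sketch breaks down, and filling that gap requires the tree-attachment and local folding machinery that constitutes the bulk of \cite{Bishop-classS}.
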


This result is essentially  Theorem 1.1 of \cite{Bishop-classS}.
The statement there includes conclusions (1), (2) and (3) 
explicitly. Conclusion (4) follows
  from the proof  given in \cite{Bishop-classS}; 
by construction, the degree of any  critical point of $f$
 equals the 
graph degree of a corresponding vertex $v$ of a
 tree $T'$ that is 
obtained by adding finite trees to the vertices of $T$. The
bounded geometry condition implies the vertices of $T$ have
uniformly bounded degree, $D$.
Each of the added trees has maximum vertex degree   $4$ and 
has degree $3$ at the vertex that is attached to $ v \in T$.
At most $\deg(v)$ such trees are added at $v$, so the 
new tree has maximum degree at most $4D$. This gives (4).
Since the tree $T$ that we construct in the proof 
of Theorem \ref{App Omega} will have maximum vertex 
degree $3$, this gives the ``12'' in part (2) of 
Theorem \ref{App Omega}.

Conclusion (4) (and hence the estimate in Theorem \ref{App Omega})
can be improved by making some alterations 
to the folding construction in \cite{Bishop-classS},
 but we only 
sketch the possibilities here; details will appear elsewhere.
 It is fairly easy to modify the 
construction  so that the
finite trees we add to $T$ 
have degree $1$ at the vertex that is attached 
to $ T$; this requires redrawing the 
diagrams in Figure 8 of \cite{Bishop-classS} with a new 
vertical segment at endpoints of the horizontal segment and 
changing some corresponding bookkeeping in the proof.
This  change lowers the bound in (4) above
from $4D$ to $2D$.  If we replace $\tau$ by 
a positive multiple of itself, we can do even better. 
Subdivide each edge of 
$T $ into three sub-edges, so as to give a bounded 
geometry tree $T'$ and rescale $\tau$ so that each 
new edge has $\tau$-length at least $\pi$. 
We then   further modify  the 
construction in \cite{Bishop-classS}  
by adding  two extra vertices
to the horizontal edges in Figure 8 of \cite{Bishop-classS}
and make corresponding changes to the bookkeeping. This 
results in a finite tree being added to every third vertex 
of $T'$ viewed from a complementary component, and we 
can easily  arrange all these to be the ``new'' 
vertices of $T'$, and these all have degree $2$. We can add 
at most two trees to any such vertex, making the degree 
at most $4$.
This makes the bound in (4) equal to $\max(D,4)$.

As noted above, in the proof of Theorem \ref{App Omega}, 
we apply Theorem \ref{Exists} to 
a tree $T$ with maximal degree $3$. Some of the 
complementary components of $T$ correspond to 
components of $\Omega(\rho)$, and on 
these components all the $\tau$-lengths equal $\pi$.
This implies 
the folding construction does not add any trees 
inside these components.
 The other complementary components of $T$ are ``new'' 
components and we are free to choose any
positive  multiple of $\tau$ we want on these 
components.
Therefore we may assume the  $\tau$-lengths in these 
new components are all large, and hence the sketched
 argument above applies: finite trees are adjoined 
to vertices of degree $2$ in $T$, and the added trees 
have maximum degree $4$ and degree $1$ at the vertex 
attached to $T$ (so the degree $2$ vertex becomes degree
at most $4$).
Thus making the appropriate changes to the folding construction
in \cite{Bishop-classS}
will give the upper bound $4$ in part (2)
 of Theorem \ref{App Omega}.

One could improve the bound $\max(D,4)$ to $\max(D,3)$ if 
the finite trees we add have maximum vertex degree $3$.
Figure 12 of \cite{Bishop-classS}  shows how degree $4$ 
vertices arise
when adjacent trees are attached to each other.
It seems plausible that this can be avoided, 
but requires more extensive modifications and 
needs to be verified.
We would also need to  add at most one 
tree to any vertex of $T'$ (the tree obtained from $T $
by splitting each edge into three edges).
However, this is easy to arrange 
 by always attaching trees one vertex 
to the left of a vertex of $T$, when viewed from the 
corresponding complementary component; this will attach 
at most one tree to each of the two vertices of $T'$ that 
lie on any edge of $T$.
Together, these improvements
 would give the bound $3$ in part (2) of
Theorem \ref{App Omega}.

\section{ Glueing trees using conformal maps} 
 \label{build sec}  

In this section, we describe a way to combine two or more 
bounded geometry trees to obtain a new 
bounded geometry tree.

It is convenient to introduce a stronger version of 
bounded geometry. We say that a Jordan arc $\gamma$ is 
{\bf $\epsilon$-analytic} if there is conformal map on  
$$ \gamma(\epsilon) = \{ z: \dist(z, \gamma) < \epsilon \cdot 
 \diam(\gamma)\}$$
that maps $\gamma$ to a line segment.
We call a  bounded geometry tree $T$ 
 {\bf uniformly analytic} if there 
is an $\epsilon >0$ so that  every edge of $T$
is $ \epsilon$-analytic. We say a vertex $v$ of $T$ is 
$\epsilon$-analytic if it has degree two and the union of the 
two edges meeting at $v$ form a single  $\epsilon$-analytic 
Jordan arc. Note that vertices of a uniformly analytic 
tree need not be analytic (the  edges may meet at
various angles), but that if we  add vertices 
to the edges of a uniformly analytic tree $T_1$ to  form a new 
bounded geometry tree $ T_2$, then all the new vertices 
are analytic with the same constant as $T_1$.
 A bounded geometry forest in 
which all the edges are uniformly analytic will be called
a {\bf uniformly analytic forest}.
An important  example of such a forest is $\partial \Omega(\rho)$, 
where $\Omega$ is a model domain and the vertices are 
the usual ones, $\tau^{-1}( \rho+i \pi  \integers)$.  
In this case, all the vertices are analytic as well (with 
a uniformly bounded constant).

\begin{lemma} \label{transfer half-plane}
Suppose $T$ is a uniformly analytic  forest and suppose $\Omega$ 
is a connected component of $\complex \setminus T$. 
Suppose $ W$ is either $\disk$ or 
$\rhp$ and that $\tau: \Omega \to W$ is conformal.
 Suppose that
$T_0 \subset \overline{W}$ is a uniformly analytic tree (all 
open edges of $T_0$ are in $W$, but some vertices may lie 
on the boundary of $W$).  
We assume $\tau^{-1}(T_0)$ is locally finite in $\complex$.
Suppose there is a $M < \infty$ so that for   every 
edge  $e$ of $T_0$ either  
\begin{enumerate}
\item the edge $e$ has hyperbolic diameter at most $M$ (we call these
  the internal edges of $T_0$), or  
\item the edge  $e$  has one endpoint $x = \tau(v) \in \partial W$,
     where $v$ is an analytic vertex of $T$  and 
$$  \frac 1M \diam(\tau(I \cup J)) \leq \diam(e) \leq M  \cdot
\diam(\tau(I\cup J )),$$
 where we take
Euclidean diameters, and $I$ and $J$ are the two edges 
of $T$ adjacent to $v$. We call such an edge $e$ 
 a boundary edge of $T_0$.
\end{enumerate} 
Then $T'=T \cup \tau^{-1} (T_0)$ is a uniformly analytic  forest.
The constants for $T'$ depend only on the bounded geometry 
and uniform analyticity constants for $T$ and $T_0$. 
If a component of $W \setminus T_0$ satisfies a 
positive $\tau$-length lower bound, the same bound is satisfied 
by the image of this component under $\tau^{-1}$.
\end{lemma}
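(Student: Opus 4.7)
The plan is to verify, for $T' = T \cup \tau^{-1}(T_0)$, uniform analyticity and the four bounded geometry conditions, and then the $\tau$-length transfer. Edges of $T_0$ come in two types and each needs its own local extension tool. An \emph{internal edge} $e$ of $T_0$ sits in a hyperbolic ball in $W$ of radius $\le M$, so Koebe's theorem makes $|(\tau^{-1})'|$ comparable at any two points of such a ball, with constants depending only on $M$. A \emph{boundary edge} attaches to $\partial W$ at $x = \tau(v)$ where $v$ is an analytic vertex of $T$; here $I \cup J$ is an $\epsilon$-analytic Jordan arc, and Schwarz reflection across $I \cup J$ extends $\tau^{-1}$ conformally to a two-sided Euclidean disk around $\tau(v)$ of radius $\simeq \diam(\tau(I \cup J))$, on which Koebe again applies. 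With these two local extension tools in hand, the verification reduces to standard bounded-distortion arguments on each disk.

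For internal edges, bounded distortion of $\tau^{-1}$ on a hyperbolic neighborhood transfers $\epsilon$-analyticity, $C^2$ bounds, vertex angles, and length comparability of adjacent edges from $T_0$ to $\tau^{-1}(T_0)$, while conformal invariance of modulus transfers condition (4) between non-adjacent internal edges. For a boundary edge $e$ attached at $v$, hypothesis (2), namely $\diam(e) \simeq \diam(\tau(I \cup J))$, places $e$ well inside the Schwarz disk, so the extended Koebe gives uniform analyticity of $\tau^{-1}(e)$ with $\diam(\tau^{-1}(e)) \simeq \diam(I \cup J)$; the angle between $\tau^{-1}(e)$ and $I \cup J$ at $v$ is (up to bounded distortion) the angle between $e$ and $\partial W$ at $x$, which is bounded away from $0$ and $\pi$ by the bounded geometry of $T_0$ at its boundary vertices. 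The $\tau$-length lower bound transfer is then immediate: if $\sigma : U \to \rhp$ witnesses the bound on a component $U \subset W \setminus T_0$, then $\sigma \circ \tau$ is a conformal map from $\tau^{-1}(U)$ onto $\rhp$ that induces exactly the pushforward of $\sigma$'s partition of $\partial \rhp$, so the same lower bound holds.

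The main obstacle is condition (4) for mixed non-adjacent pairs, that is, an edge $f$ of $T$ together with an edge $e \subset \tau^{-1}(T_0)$ sharing no vertex with $f$. If $e$ is internal, its Koebe neighborhood lies at Euclidean distance $\gtrsim \diam(e)$ from $T$, so any non-adjacent $f \subset T$ is far enough to apply Lemma \ref{separation lemma}. If $e$ is a boundary edge attached at $v$, then $e \subset D(v, C \diam(I \cup J))$ via the Schwarz extension, while the non-adjacency of $f$ to $v$, combined with the bounded geometry of $T$ at $v$, forces $\dist(f, D(v, C \diam(I \cup J))) \gtrsim \diam(I \cup J) \simeq \diam(e)$; Lemma \ref{separation lemma} then supplies the required modulus lower bound. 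Together with the previous paragraph, this completes the verification that $T'$ is a uniformly analytic forest, with constants depending only on the bounded geometry and analyticity data of $T$, $T_0$, and the constant $M$ in the hypothesis.
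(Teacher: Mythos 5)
Your overall plan tracks the paper's: Koebe distortion for the internal edges, Schwarz reflection across $\tau(I\cup J)$ for the boundary edges, and conformal invariance of $\tau$-length for the final claim. The internal-edge discussion and the $\tau$-length transfer are fine. The gap is in the mixed non-adjacent pair where $e$ is a boundary edge of $T_0$ attached at an analytic vertex $v$ of $T$, and $f$ is an edge of $T$.

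You claim that non-adjacency of $f$ to $v$, plus bounded geometry of $T$ at $v$, forces
$\dist(f, D(v, C\diam(I\cup J))) \gtrsim \diam(I\cup J)$. This is false: take $f$ to be the edge of $T$ that shares the \emph{far} endpoint $u$ of $I$ (so $f$ is adjacent to $I$ but not to $v$, hence not adjacent to $\tau^{-1}(e)$). Then $\diam(f)\simeq\diam(I)\simeq\diam(I\cup J)$ by bounded geometry, $f$ passes through $u$, and $|u-v|\simeq\diam(I)$; so $f$ meets (or comes within distance $\ll\diam(I\cup J)$ of) any disk $D(v,C\diam(I\cup J))$ with $C\gtrsim 1$, and in particular with any $C$ large enough to contain $\tau^{-1}(e)$. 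The bound you invoke simply does not hold, and separation condition (4) for this pair is in fact borderline: $\dist(\tau^{-1}(e),f)$ and $\min(\diam(\tau^{-1}(e)),\diam(f))$ are both of order $\diam(I\cup J)$, so one cannot win by a crude ``$f$ is far from a disk around $v$'' argument. The paper instead observes that the Schwarz-reflected $\tau^{-1}$ is conformal on an $\epsilon\cdot\diam(e)$ tube around $e$ whose image meets $I$ and $J$ but no other edge of $T$; $f$ is therefore outside this image, a path family in the tube separates $e$ from $\partial(\text{tube})$ with modulus bounded below, and by conformal invariance and the extension rule this family separates $\tau^{-1}(e)$ from $f$ with modulus bounded below, which by Lemma \ref{separation lemma} is the reformulation of (4) one actually wants. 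You should replace the disk-distance estimate with this modulus argument (and also treat the pair of a boundary edge and a non-adjacent internal edge of $T_0$, which you omit).
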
 

\begin{proof}
The Koebe distortion theorem easily  implies that conditions 
(1)-(4) in the definition of bounded geometry are transferred 
from $T_0$ to $\tau^{-1}(T_0)$ for all  pairs of 
internal edges in $T_0$.
Similarly,  the images of all internal edges are clearly 
uniformly analytic. Moreover,  because of our 
assumption on the hyperbolic diameters, 
each internal edge  $e$ of $T_0$ 
is separated from $\partial W$ by a path family 
in $W$  with 
modulus bounded uniformly away from zero.  By conformal
invariance  of modulus, this also holds for   $\tau^{-1}(e)$ and 
$\partial \Omega$ and hence (4) holds whenever one edge 
corresponds to an internal edge  of $T_0$ and the 
other is an edge of $T$. A similar argument works 
for an internal edge of $T_0$ and a non-adjacent boundary edge
of $T_0$. 

Each boundary  edge $e$ of $T_0$ 
has an endpoint $x$ corresponding to an analytic vertex 
$v$  of  $T$ and by Schwarz reflection
 the map $\tau^{-1}$ extends analytically  
a uniform neighborhood of $ S=\tau(I\cup J)$ where $I, J$ 
are the edges of $T$ adjacent to $v$. Since $\diam(S) 
\gtrsim \diam(I \cup J)$, this implies $\tau^{-1}(e)$ is 
uniformly analytic. Moreover, there is  an $\epsilon \cdot 
\diam(e)$ neighborhood of $e$ where $\tau^{-1}$ extends
to be conformal and whose image hits $I$ and $J$, but 
no other edges of $T$. This (and the Koebe distortion 
theorem) implies the separation property (4) holds for 
$\tau^{-1}(e)$.
The final statement holds simply because 
having a positive lower bound for  $\tau$-lengths 
is  conformally invariant by definition. 
\end{proof}

\section{ Proof of Theorem  \ref{App Omega}: part 1,  bounded geometry}  
 \label{bg sec} 

It is stated in \cite{Bishop-classS} that Theorem \ref{Exists} 
reduces constructing functions in $\classS_{2,0}$ to ``drawing a picture''
of the correct tree.
One then has to verify that the tree has bounded geometry  and 
the complementary components each satisfy a positive  $\tau$-length 
lower bound.  
This is exactly what we will do to prove Theorem \ref{App Omega}.
In this section we connect  the various components 
of $\Gamma=\partial \Omega(\rho)$ to form a bounded geometry tree $T_1$. 
If $\partial \Omega(\rho)$ has  $N < \infty$ 
components, then the tree $T_1$ will have $2N$ complementary 
components;  
$N$ of these are the original 
components of $\Omega(\rho)$  and the other  $N$  are 
subdomains of $W = \complex \setminus \overline{\Omega}$.
When $N$ is finite, it is easy to make the connections if 
we are willing to allow the bounded geometry constant to 
grow. However, we shall give a more intricate construction 
that can  also deal with infinitely many components and  
gives uniformly bounded  geometry.

The new components might not satisfy a  $\tau$-length  lower 
bound condition, 
but we shall fix this in the next section by a simple
trick that  subdivides each of these new component into 
 infinitely many  components, 
each with a positive  $\tau$-length lower bound.
Later, in Section \ref{proof single comp}, we  will show 
how to replace each component by a single subdomain that 
has the desired  $\tau$-length lower bound.
See Figure \ref{FillIn}.

\begin{figure}[htb]
\centerline{
\includegraphics[height=1.2in]{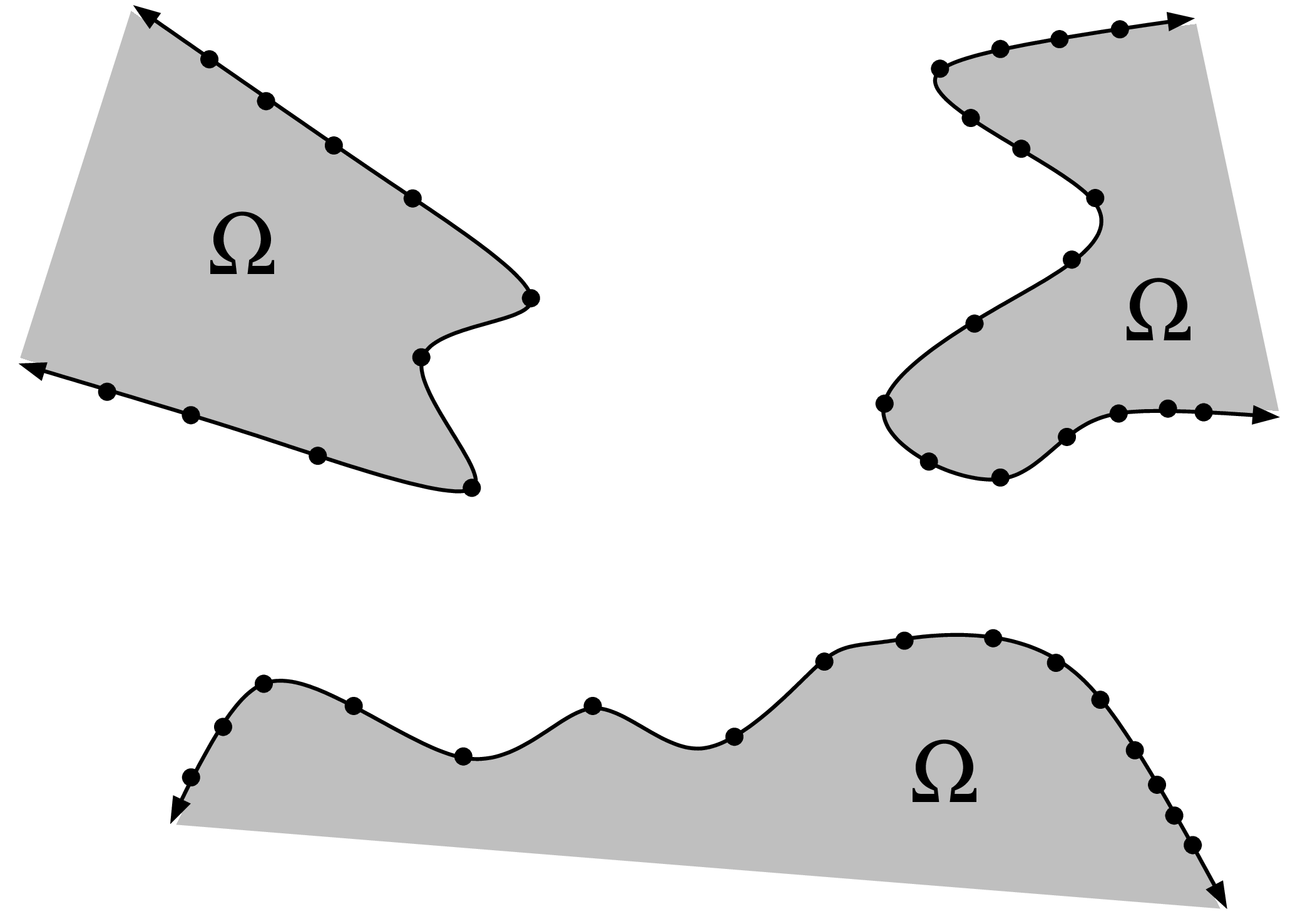} 
$\hphantom{xxx}$
\includegraphics[height=1.2in]{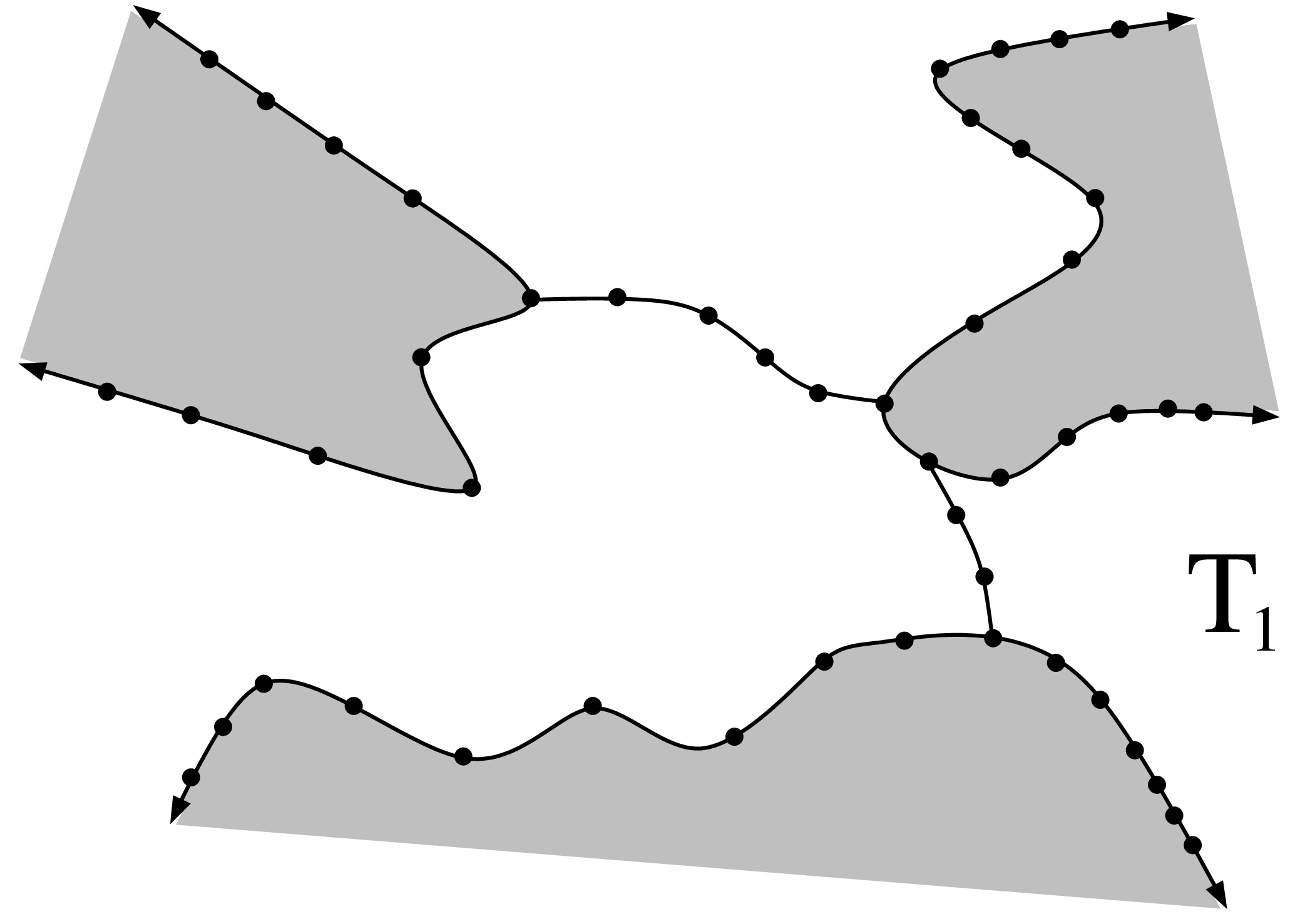}
$\hphantom{xxx}$
\includegraphics[height=1.2in]{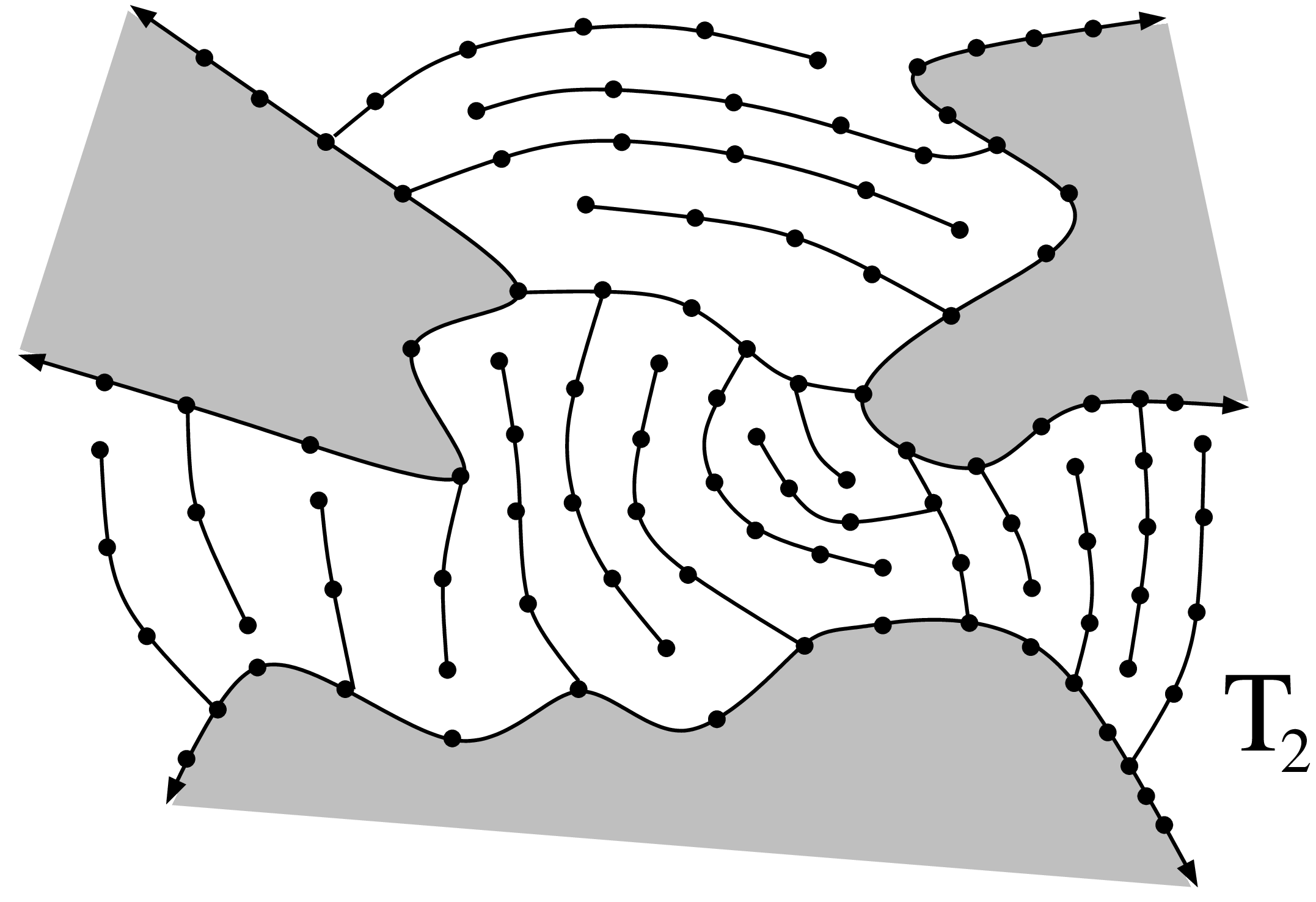}
}
\caption{ \label{FillIn}
The idea of the proof of Theorem \ref{App Omega} is
to reduce it to Theorem \ref{Exists} by  
joining the components of $\Gamma = \partial \Omega(\rho)$
to form a bounded geometry tree $T_1$ and then add extra edges 
to  the ``new''  components make a tree $T_2$ so 
that  the $\tau$-length   lower bound holds.
}
\end{figure}

We should note that the construction of $T_1$ and $T_2$ 
given in this paper was chosen for its generality, 
but it  might not be the most elegant or 
efficient choice for a particular $\Omega$ that  
arises in some application. Very likely, the geometry 
of the model domain will suggest a natural way of 
connecting the  different components of $\partial \Omega(\rho)$,
while satisfying the bounded geometry and 
 $\tau$-length   conditions.
Theorems \ref{App Omega} and \ref{details}
simply ensure that there is always  at least one way  
to accomplish this.

Now we start the construction of $T_1$.
Let  $ \Gamma= \partial \Omega(\rho)$.
This is a union of 
unbounded, analytic Jordan curves  and each curve comes with 
a set of marked points (or vertices) defined by ${\rm Im}(\tau(z)) 
\in  \pi i \integers$ (recall that this is called a 
conformal partition of $\partial \Omega(\rho)$).
 $\Gamma$ is a uniformly analytic forest
and every vertex is  analytic with  a uniform constant.

Let $W = \complex \setminus \overline{\Omega(\rho)}$.
This is a proper simply connected domain in the plane, so
by the Riemann mapping theorem there is  a conformal map 
$ \Psi: W \to \disk $. 
Each curve  $\Gamma_j=\partial \Omega_j(\rho) $
 maps to an  open arc  $I_j \subset \circle$ under $\Psi$.
We let $E = \circle \setminus \cup_j I_j$; this compact set 
corresponds to $\infty$ under $\Psi^{-1}$, hence it 
has zero Lebesgue length (even stronger, it has zero logarithmic 
capacity, but we won't need this).
The  partition   of  $\Gamma_j = \partial \Omega_j(\rho)$ 
with endpoints $\tau_j^{-1}(i \pi \integers)$
corresponds via $\Psi$ to a partition  of  $I_j$.
Because  $\partial \Omega(\rho)$ is a bounded 
geometry forest (with constant depending  only 
on $\rho$), 
adjacent intervals in the partition of $I_j$ 
have comparable lengths with a fixed 
constant,  depending  only on $\rho$.
 In particular, we can choose  
a point $v_j \in I_j$ so that the distances from 
$v_j$ to each endpoint of $I_j$ are comparable  to 
each other (just take 
an endpoint of a partition interval that contains the 
actual center of $I_j$). We call 
 $v_j$  the ``approximate center'' of $I_j$. 
The main objective of this section is to prove: 

\begin{lemma}  \label{build T1} 
Suppose notation is as above, i.e., $T_0$ is the tree 
consisting of $ \Gamma=\partial \Omega(\rho)$ with vertices 
given by the conformal partition on each component of 
$\partial \Omega(\rho)$.
There is a bounded geometry tree $T_1$   that contains
$T_0$, so that: 
\begin{enumerate}
\item 
  All new edges are in $W = \complex \setminus 
\overline{\Omega(\rho)}$.   
\item  The vertices of $T_1$ on $\partial \Omega(\rho)$ 
are exactly the vertices of $T_0$ (no new vertices are 
added  on $ \partial \Omega(\rho)$). 
\item  If $\Omega(\rho)$ has $N < \infty $ components then $ \complex 
\setminus T_1$ has  $2N$ connected components, $N $ of which are the 
connected components of $\Omega(\rho)$ and $N$ are subdomains of $W$.  
\end{enumerate}
\end{lemma}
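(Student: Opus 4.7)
The plan is to use the Riemann map $\Psi: W \to \disk$ to transfer the construction to $\disk$, build a bounded-geometry tree $T_0' \subset \overline{\disk}$ whose only boundary vertices are $\{v_j\}$, and then set $T_1 := T_0 \cup \Psi^{-1}(T_0')$, verifying the hypotheses of Lemma~\ref{transfer half-plane} so that the bounded-geometry bounds for $T_0'$ transfer to $\Psi^{-1}(T_0')$.

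First I would construct $T_0'$ via a standard hyperbolic Whitney/dyadic decomposition of $\disk$. Place a vertex $p_A$ inside each Whitney box $B_A$ sitting over a dyadic arc $A \subset \circle$, and connect each $p_A$ to its dyadic parent $p_{A'}$ by a hyperbolic geodesic segment; this produces an infinite tree $\widetilde T \subset \disk$ with uniformly bounded hyperbolic geometry. For each $j$, let $n_j$ be the unique integer with $2\pi/2^{n_j+1} \leq |I_j| < 2\pi/2^{n_j}$ and attach $v_j$ to the vertex $p_A$, where $A$ is the level-$n_j$ dyadic arc containing $v_j$; this boundary edge is drawn with Euclidean diameter $\simeq |I_j|$. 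Finally, prune $\widetilde T$ by deleting every infinite subtree containing no attached $v_j$, obtaining a locally finite tree $T_0' \subset \overline{\disk}$ whose leaves on $\partial \disk$ are exactly $\{v_j\}$. I would keep the remaining degree-$2$ interior vertices rather than contracting them, so that every edge of $T_0'$ retains bounded hyperbolic diameter.

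Second I would verify the hypotheses of Lemma~\ref{transfer half-plane} for $\tau = \Psi$ and the tree $T_0'$. Edges inherited from $\widetilde T$ are internal edges of bounded hyperbolic diameter, while each edge attached to a $v_j$ is a boundary edge; since $v_j$ is the approximate center of the bounded-geometry partition of $I_j$, the two partition intervals of $I_j$ at $v_j$ have length $\simeq |I_j|$, matching the Euclidean diameter of the attached boundary edge. Lemma~\ref{transfer half-plane} then yields that $T_1 := T_0 \cup \Psi^{-1}(T_0')$ is a uniformly analytic forest with constants depending only on the bounded-geometry constants of $T_0$. Conclusion~(1) is immediate since $T_0' \subset \disk$ forces $\Psi^{-1}(T_0') \subset W$; conclusion~(2) is immediate since the only vertices of $T_0'$ on $\partial \disk$ are the $v_j$'s, each already a vertex of $T_0$. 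For conclusion~(3), a planar count shows that a tree in $\overline{\disk}$ with $N$ leaves on $\partial\disk$ and interior vertices in $\disk$ has exactly $N$ complementary components in $\disk$, each adjacent to one of the boundary arcs between consecutive leaves; pulling back by $\Psi^{-1}$ produces $N$ subdomains of $W$, one adjacent to each $\Gamma_j$, giving $2N$ components in all together with the $N$ components of $\Omega(\rho)$.

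The main obstacle I expect is maintaining \emph{uniform} bounded-geometry constants when the arcs $I_j$ have wildly varying sizes and the complementary set $E$ can be Cantor-like. The Whitney/dyadic decomposition is the right tool because it is self-similar in the hyperbolic metric, so all bounds are scale-invariant and depend only on the bounded-geometry constants of the conformal partitions on each $\Gamma_j$ (which in turn depend only on $\rho$). A subtler issue is that after pruning, long chains of degree-$2$ vertices may persist, corresponding to deep dyadic paths toward isolated small $I_j$'s; these must not be contracted into a single edge, since that would violate the bounded-hyperbolic-diameter requirement for internal edges in Lemma~\ref{transfer half-plane}. Leaving them in place is permitted, since the definition of bounded geometry does not prohibit degree-$2$ vertices.
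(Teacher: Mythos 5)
Your approach parallels the paper's (Whitney/dyadic tree in $\disk$ plus Lemma~\ref{transfer half-plane}), but there is a genuine gap in the scale at which you attach the boundary edges.

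You attach $v_j$ to the vertex $p_A$ over the level-$n_j$ dyadic arc with $2^{-n_j}\simeq |I_j|$, so your boundary edge has diameter $\simeq |I_j|$, and you justify this by asserting that ``the two partition intervals of $I_j$ at $v_j$ have length $\simeq |I_j|$.''  That assertion is not proved and is false in general.  The defining property of the approximate center $v_j$ is only that its two distances to the endpoints of $I_j$ are comparable to each other (hence each $\simeq |I_j|$); it says nothing about the lengths of the two \emph{partition arcs} adjacent to $v_j$.  A bounded-geometry partition of an arc with infinitely many elements accumulating at both endpoints must have some element of length $\gtrsim |I_j|$, but that element need not be the one containing the center: the lengths can dip near the middle (e.g.\ lengths $\dots, 4r, 2r, r, 2r, 4r, \dots$ around $v_j$ with $r\ll|I_j|$, still of bounded geometry).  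Conformally this happens whenever the exterior map $\Psi$ places a ``thin'' stretch of $\partial\Omega_j(\rho)$ --- where the interior conformal partition is fine --- near the harmonic-measure center of $\Gamma_j$ as seen from $W$.  In such a case the boundary edge you draw has diameter $\simeq |I_j| \gg \diam(\Psi(I\cup J))$, violating the hypothesis of Lemma~\ref{transfer half-plane} on boundary edges, which requires $\diam(e)\simeq \diam(\tau(I\cup J))$ where $I,J$ are the two edges of $T_0$ adjacent to $v_j$.  (That bound is not cosmetic: it is exactly what lets $\tau^{-1}$ extend conformally, by Schwarz reflection across $I\cup J$, to a neighborhood of $e$ of size $\simeq \diam(e)$, which is how the transfer lemma gets uniform analyticity and separation for the pulled-back boundary edges.)

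The fix is what the paper does: descend the dyadic/Whitney path toward $v_j$ until the distance to $\circle$ is comparable to $r$, the length of the shorter of the two partition arcs of $I_j$ at $v_j$, and only then leave the Whitney graph and draw a short straight segment from that truncation vertex to $v_j$.  Equivalently, in your notation, attach $v_j$ at level $m_j$ with $2^{-m_j}\simeq r$ rather than $2^{-n_j}\simeq |I_j|$, and note that since $r\le |I_j|$ this is always at least as deep as your choice, with a chain of bounded-hyperbolic-diameter internal edges in between.  With that change the rest of your outline (pruning, verification of Lemma~\ref{transfer half-plane}, and the count of $2N$ complementary components) goes through along the same lines as the paper.
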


Using Lemma \ref{transfer half-plane}, the 
proof of Lemma \ref{build T1}  reduces to the 
following construction on the disk.

\begin{lemma} \label{Build T1 in disk}
Suppose $E \subset \circle$ is closed and has length zero, 
$ \circle \setminus E = 
\cup_j I_j $ and $ v_j \in I_j$ are the approximate centers, 
as above. 
Then there is a tree $T$ in $\disk$ so that:
\begin{enumerate}
\item  The tree $T$ has bounded geometry. In fact, $T$ is uniformly 
  analytic and every edge is either a line segment or a 
  circular arc. The maximum vertex degree is $3$.
\item   For each $v_j$ there is a boundary edge of $T$ that has 
   $v_j$ as a common endpoint. The length of this edge is comparable  
   to the lengths of the two  partition arcs of $I_j$
     that   have $v_j$ as an endpoint.  This 
   edge makes an angle with $\circle$ that is bounded 
   uniformly away from zero. 
\item Every other arc of $T$ has uniformly bounded hyperbolic 
     diameter. 
\item The closure of every component of $\disk  \setminus T$  
      meets $E$ in exactly one point. In particular, if $E$ is 
      a finite set with $N$ elements, then  $\disk \setminus T$
      has $N$ components.
\end{enumerate} 
\end{lemma}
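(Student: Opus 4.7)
The plan is to produce $T$ in two layers: radial stem edges attached to each $v_j$, and a backbone tree inside $\disk$ that unifies the stems while ensuring that complementary regions correspond bijectively to points of $E$. For each arc $I_j$ I would take $s_j$ to be the radial segment from $v_j$ going into $\disk$ whose Euclidean length is a small fixed multiple of the length of either of the two partition arcs of $I_j$ incident to $v_j$ (the two are comparable because the conformal partition of $\partial \Omega(\rho)$ has bounded geometry, and this transfers to the image partition on $I_j$ via Koebe's distortion theorem). The stem meets $\circle$ perpendicularly and, if needed, is subdivided into sub-edges of bounded hyperbolic diameter. Let $w_j$ denote its inner endpoint. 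This handles conclusion (2) of the lemma and part of (3).

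For the backbone I plan to organize the family of arcs $I_j$ into a binary hierarchy adapted to $E$. Recursively bisect the arc family: at each step, pick two points of $E$ (or a point and an accumulation point when needed) that divide the remaining arcs into two subfamilies, place a degree-$3$ vertex on the chord between those two points, and connect it by hyperbolic geodesic edges to the vertices produced at the next refinement level. Subdivide each geodesic edge into segments of bounded hyperbolic diameter. The leaves of the resulting combinatorial tree are the $w_j$'s, and attaching the stems $s_j$ yields the final $T$. Every interior vertex has degree at most $3$, so the max-degree condition in (1) holds. Each edge is a hyperbolic geodesic segment, hence a Euclidean line segment or circular arc. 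When $E$ is finite this terminates; when $E$ is infinite the tree has infinite depth and accumulates precisely on $E$ from inside $\disk$.

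The principal obstacle is to verify bounded-geometry property (4) in the definition (uniform modulus separation of non-adjacent edges) simultaneously with the external length comparability in conclusion (2), which ties the interior backbone scale at each leaf $w_j$ to the external scale of the partition on $I_j$. The cleanest verification uses the hyperbolic metric on $\disk$: bounded geometry of the backbone reduces to a uniform lower bound on hyperbolic distance between non-adjacent edges, which translates to a modulus lower bound via Lemma \ref{separation lemma}, and the external comparability follows from the bounded geometry of the conformal partition together with Koebe's theorem. One must also check that, since $E$ has length zero and is therefore nowhere dense, the recursive bisection can always be refined enough to separate every pair of distinct points of $E$, so each complementary region of $T$ in $\disk$ accumulates on exactly one point of $E$, giving condition (4) of the lemma. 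A few degenerate cases (isolated points of $E$, arcs sharing endpoints) require separate handling but fit within the same scheme.
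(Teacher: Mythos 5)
Your two-layer plan (radial stems attached at each $v_j$, plus an internal backbone built by a recursive binary bisection of $E$ using hyperbolic-geodesic edges) is a genuinely different strategy from the paper's, which instead takes the fixed dyadic Whitney decomposition of $\disk$ and, for each $v_j$, follows a canonical Whitney path (move radially inward if possible, otherwise counterclockwise) toward the central disk; the union of these paths plus the boundary of the central disk is the tree, truncated near each $v_j$ and replaced by a straight segment to match the boundary scale. The Whitney route buys bounded geometry essentially for free: every edge is a Whitney edge (or a short replacement segment), so the separation, comparability and analyticity conditions hold with uniform constants from the outset, and the argument reduces to two purely topological observations ("left halves of bottoms never lie on paths", and each $\partial U_j$ meets $E$ in exactly one point).

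The gap in your proposal is precisely the piece you flag as "the principal obstacle" --- it is not resolved, and as stated the scheme does not obviously give bounded geometry. "Pick two points of $E$ that divide the remaining arcs into two subfamilies" leaves the relevant geometry undetermined: for a Cantor-type $E$ (length zero, nowhere dense, but with gap-lengths varying wildly), the chord between two such chosen points can lie at essentially any hyperbolic depth, and successive bisection levels need not nest at comparable scales. You would need a recursion rule tied to the Euclidean sizes of the gaps (a Whitney-type stopping time, in effect), and a proof that the resulting geodesic edges satisfy condition (4) of the bounded-geometry definition; invoking "uniform lower bound on hyperbolic distance between non-adjacent edges" is not a substitute, since you also need all interior edges to have \emph{uniformly bounded} hyperbolic diameter, which forces infinitely many refinement levels accumulating at each point of $E$ and re-opens the question of how the bisection rule controls this. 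Similarly, the matching between the backbone scale at $w_j$ and the partition scale at $v_j$ is not automatic: the combinatorial depth of $v_j$ in your binary hierarchy is governed by the structure of $E$, whereas the required stem length is governed by the partition of $I_j$, and nothing in the bisection rule couples these two. The paper handles exactly this coupling by truncating the Whitney path at the depth where its distance to $\circle$ is comparable to the partition-arc length at $v_j$, then splicing in a straight segment; some analogue of that truncation step is missing from your plan and is where the real work lies.

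\end{document}
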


\begin{proof} 
Consider a Whitney decomposition of the disk, as illustrated in 
Figure \ref{CarlesonArcs}. The innermost part of the 
decomposition is  a central disk 
of radius $1/4$.   Outside of the central disk, 
the annulus $A_1=\{ \frac 14 < |z| < \frac 12\}$  is divided 
into eight equal sectors, the annulus $A_2=\{ \frac  12  < |z| < \frac 34 \}$
into sixteen sectors, and so on, as shown in Figure 
\ref{Carleson7}.  These sectors are called Whitney boxes.

\begin{figure}[htb]
\centerline{
\includegraphics[height=3.5in]{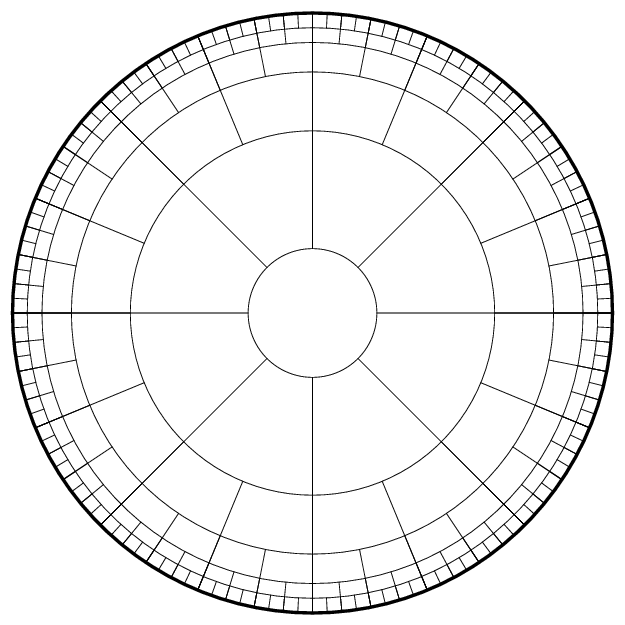}
}
\caption{ \label{Carleson7}
The  Whitney decomposition of the disk.
}
\end{figure}

Each  Whitney box has  two radial sides and two circular
arc sides concentric with the origin. The circular
 arc closer to the 
origin is called the top of the box and the arc further 
from the origin is called the bottom. Each bottom 
arc is divided into two pieces by the tops of the 
Whitney boxes below it (``below'' means between the 
given box and the unit circle).  We call these the left and right
sides of the bottom arc (left is the one   further clockwise).
The sides and bottoms of Whitney boxes we will call the 
Whitney edges, their endpoints we call Whitney vertices. The 
union of these edges and vertices forms an infinite graph 
in $\disk$ which we call the Whitney graph.
The radial projection of a  closed Whitney box $B$ onto the unit 
circle, $\circle$, is a closed  arc that we denote 
$B^*$ (this is sometimes called the ``shadow'' of $B$, 
thinking of a light source at the origin). 
The union of a closed  Whitney box $B$ and all the
closed  Whitney boxes
$B'$ so that $(B')^* \subset B^*$ is called the 
Carleson square with base $I = B^*$.

Each point on the unit circle can be connected to the central 
disk by a path  in the Whitney graph  that 
moves towards the origin whenever possible and moves 
counterclockwise otherwise.
 See Figure \ref{CarlesonArcs}.

\begin{figure}[htb]
\centerline{
\includegraphics[height=2.5in]{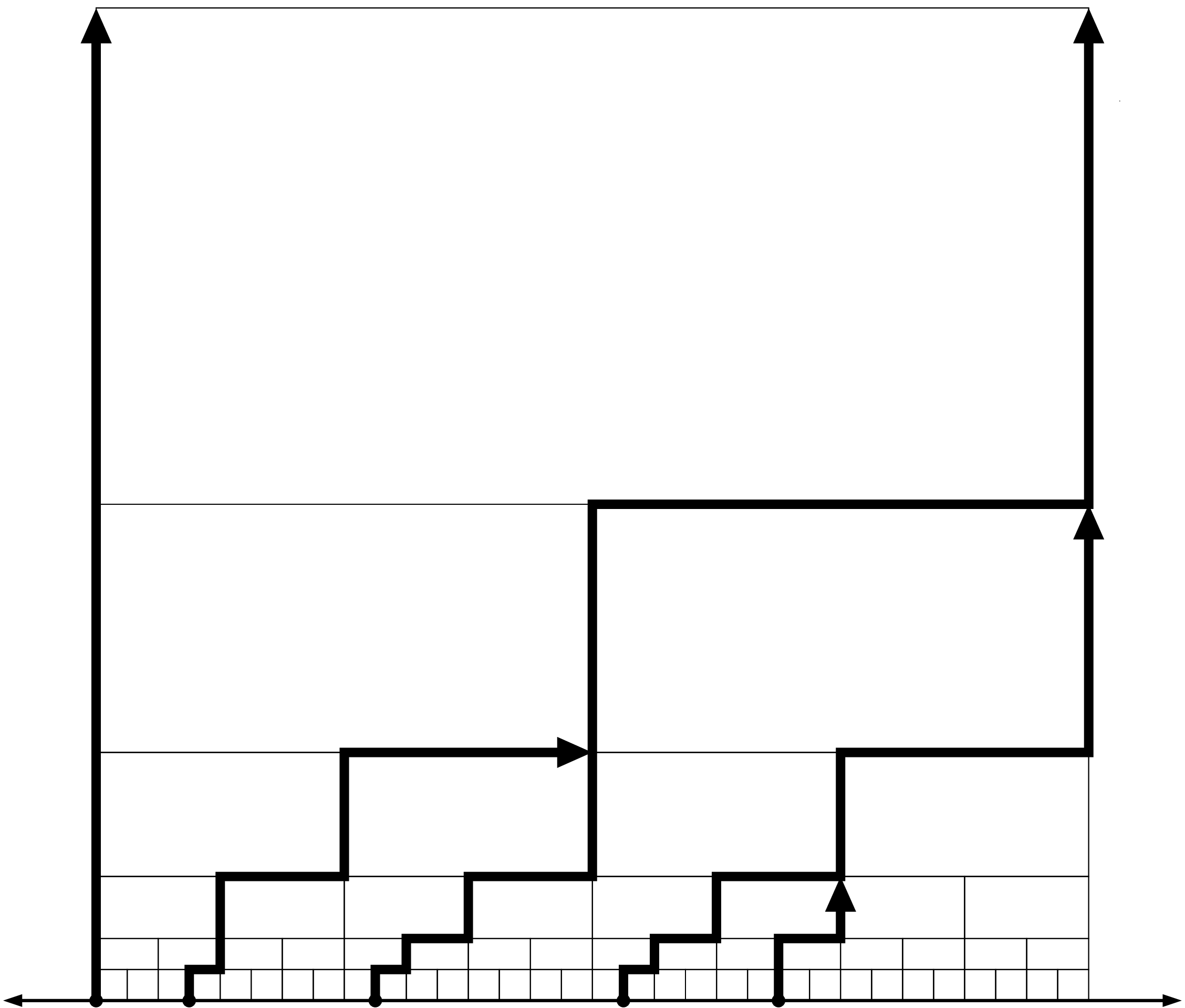}
}
\caption{ \label{CarlesonArcs}
The paths from the boundary to the central disk 
described in the text. 
Any boundary  point  can be joined to the central 
disk by a path moving along edges of Whitney boxes:
move radially towards the origin whenever possible, 
 and  move
counterclockwise (right in the picture) otherwise.
}
\end{figure}

Note that such a path never contains the ``left-half'' of 
the bottom of any Whitney box (otherwise the path would 
have moved up the left radial side of the box).
For each arc $I_j \subset \circle$ we 
 connect the approximate center $v_j$ of $I_j$  to the central disk 
by such a path. The union of all such paths, together with 
the boundary of the central disk,  is a closed 
set and divides the disk into countably many simply 
connected subdomains $\{U_j\}$.   By removing  one of
the eight arcs that bounds the central disk, 
we join  the central disk to one of the domains $U_j$.
This makes every subdomain $U_j$ 
an infinite union of Whitney boxes;  a
finite union would contain a box closest to the unit 
circle and the bottom of this box would be on a path, 
which is impossible since the left side of the bottom 
can't be on any path.

Thus every subdomain $U_j$ has a boundary that hits $\circle$, 
and $ J_j = \partial U_j \cap \circle$ must be a closed interval; if 
$J_j$  is not connected, then  
there is a component of $ \disk \setminus  U_j $ that
 is separated from the central disk by $U_j$, but this 
is impossible 
by construction (points on the boundary of this component are
on a path that continues all the way to the central disk).

 The closed   interval $J_j$ must  
hit $E = \circle \setminus \cup_j  I_j$, otherwise 
two paths were generated in the same component  $I_j$ of $\circle 
\setminus E$, contrary to the construction. Also $J_j$ must 
hit $E$ in a single point, $x_j$, otherwise $U_j$ separates some 
component  $I_k$ of $\circle \setminus E$ from the central disk, 
contradicting the fact that the approximate center of $I_k$ 
is connected to the central disk. 

We would like to turn 
the curve  $\partial U_j \setminus E$ into a tree by using the
partition vertices on $\circle \cap \partial U_j$ and using the 
Whitney vertices on $\partial U_j$, but there  are   infinitely 
many  Whitney vertices on $\partial U_j$  that accumulate at 
each approximate center $v_j$.
To fix this,   recall that
$v_j$ is the endpoint of two  partition intervals of comparable 
length. Suppose $r$ is the length of the shorter of these two and let
$w_j \in \partial U_j \cap \disk$ be a  Whitney vertex on the  path starting 
at $v$  with distance from $\circle$ between $r$ and $r/2$. 
We call this the truncation point associated to $v_j$.
The path terminating at  $v_j $
lies in a cone  in $\disk$ with radial axis,  fixed angle 
and  vertex at $v_j$  and 
there are no other paths that hit the disk 
$D(v_j, |v_j-w_j|)$, so 
we can replace the part of this path between these points 
by the line segment $[v_j,w_j]$. 
 The length of
this segment is comparable to the distance between $v_j$ and 
its neighboring partition points and is also comparable to 
the adjacent segment in the path $\partial U_j $. 
Moreover, the angle between this 
segment and the unit circle is  uniformly bounded away from 
zero, so we obtain a bounded geometry tree (even 
uniformly analytic), as desired.
See Figure \ref{CarlesonReplace}.
\end{proof} 

\begin{figure}[htb]
\centerline{
\includegraphics[height=2.15in]{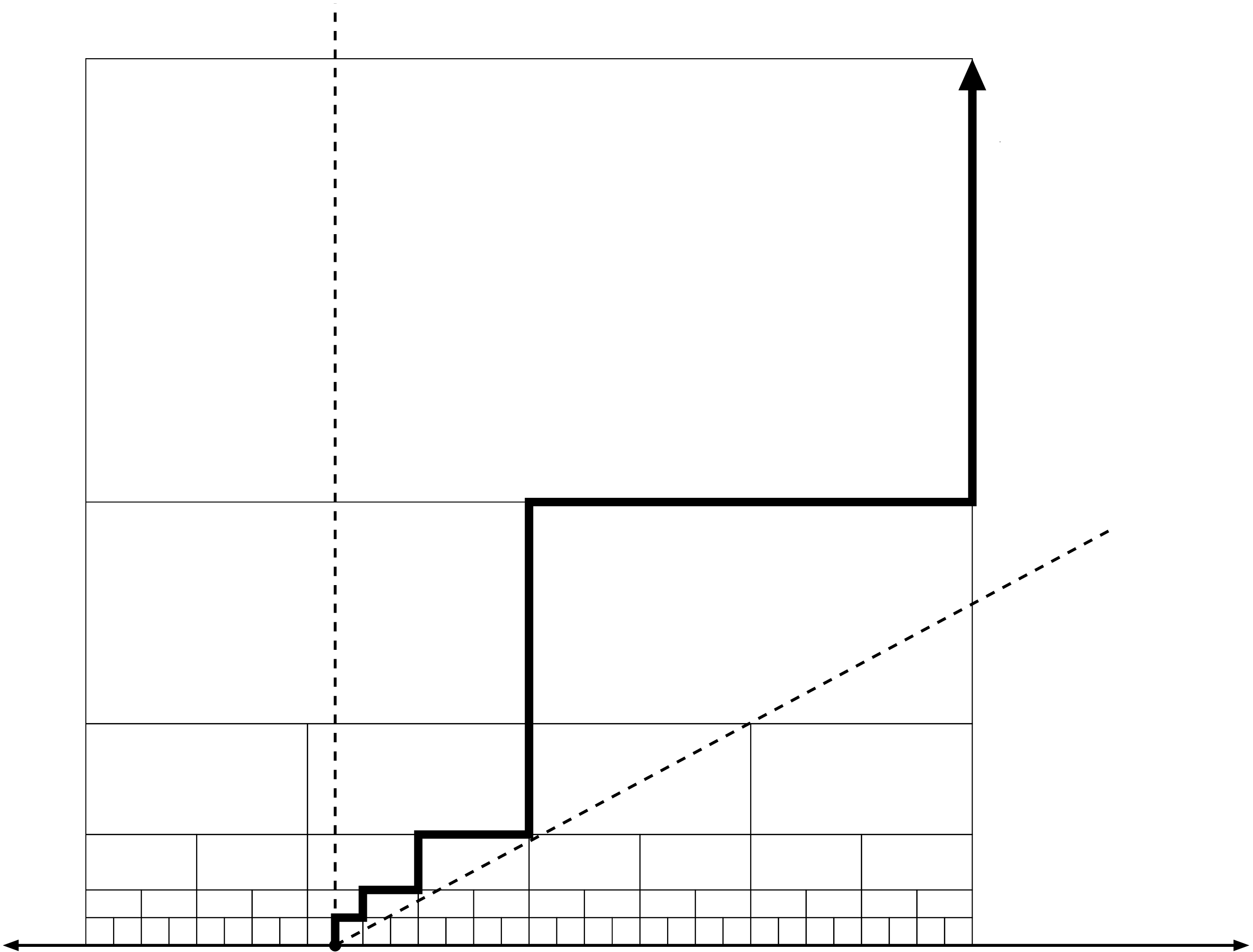}
$\hphantom{x}$
\includegraphics[height=2.15in]{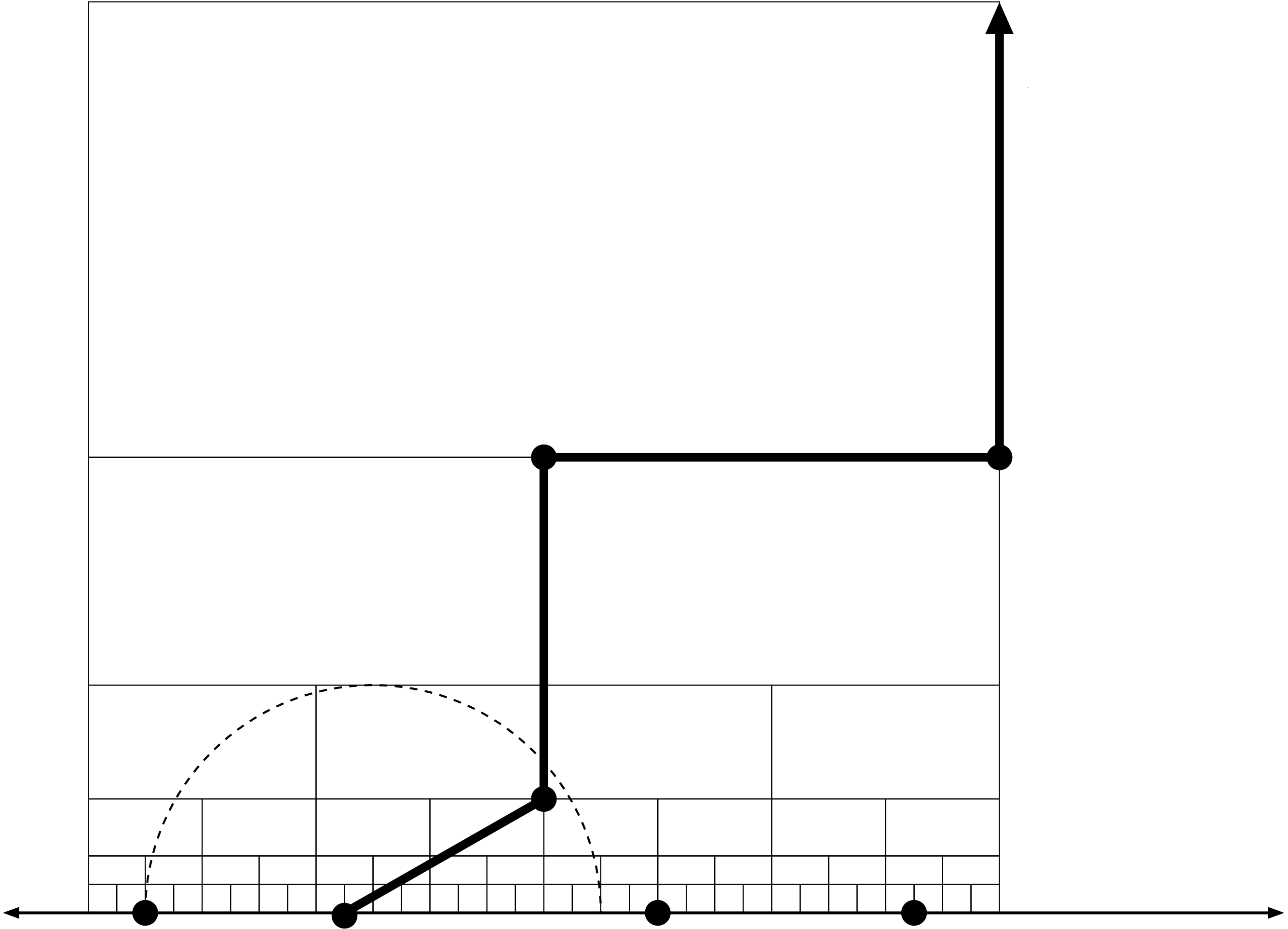}
}
\caption{ \label{CarlesonReplace}
The paths connecting  $v_j$ to the 
central disk approach  $v_j$  through a 
non-tangential cone near the boundary.
Thus if the  path to $v_j \in I_j$ is truncated at an appropriate 
scale and replaced by a line segment, this segment 
makes an angle with $\circle$ that is uniformly bounded 
away from zero.
}
\end{figure}

\section{Proof of Theorem \ref{App Omega}: part 2, the $\tau$-length bound} 
 \label{tau sec} 

In the previous section we showed how to connect the components
of $\Gamma=\partial \Omega(\rho)$ into a single, connected, bounded 
geometry (even uniformly analytic) tree $T_1$.
In this section we modify the construction further to 
give  a positive $\tau$-length lower bound on each complementary 
component; as noted 
earlier, it is then easy to modify $\tau$ by multiplying it 
by a positive constant on each component of $\Omega(\rho)$ to 
make the lower bound $\pi$, as required in Theorem 
\ref{Exists}. 
We only have 
to  prove a lower bound on the ``new''  complementary components
that  we create;
the sides of the  components of $\Omega(\rho)$ 
have   $\tau$-length equal to $\pi$ by definition. 

Let  $T \subset \disk$ be the tree constructed 
in the Section \ref{bg sec}  and let  $T'$ be the tree 
we obtain by adding a vertex at the midpoint of 
each edge of $T$ (all the edges are segments or 
circular arcs so the midpoint is well defined). 
Note that all the ``new'' vertices  are analytic 
vertices with a uniform constant.
These vertices all have degree $2$ and later we will 
attach single edges to them, giving vertices of 
degree $3$; the resulting trees will have maximum 
degree $3$.

Let   $\{U_j \} = \disk \setminus  T$ 
be the complementary components of $T$. Let $x_j = \partial U_j 
\cap E$ be as defined in the previous section, 
 and let $\Phi_j : U_j  \to \rhp $ be conformal   with $\Phi_j(x_j) = \infty$.
The  vertices of $T'$ on $\partial U_j $ map to points on $\partial \rhp$; 
let 
 ${\cal P}_j$ be the bounded geometry 
 partition of $\partial \rhp$ induced by 
these points.  The ``new'' vertices of $T'$  induce 
a  bounded geometry partition ${\cal Q}_j$ whose 
endpoints are alternating endpoints of ${\cal P}_j$.  

\begin{lemma} \label{infinitely many}
With notation as above, fix $j$ and 
consider the union of  horizontal rays in 
$\rhp$ that start at each endpoint for the partition 
${\cal Q}_j$. Along each ray, add vertices that are 
equally spaced, with a spacing that is equal to the 
distance between that ray and the closer of the two 
adjacent rays (see Figure  \ref{AddLines}).
This is a uniformly analytic forest that we denote 
$G_j$.
Then  $T_2=T_1 \cup  \cup_j \Phi_j^{-1}(G_j)$ 
is a bounded geometry, uniformly analytic tree that satisfies 
a  positive $\tau$-length lower bound.
\end{lemma}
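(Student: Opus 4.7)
The plan is to check three things in turn: (a) $G_j$ is a uniformly analytic forest in $\rhp$; (b) Lemma \ref{transfer half-plane} applies to transfer $G_j$ through $\Phi_j^{-1}$ (and then through the global conformal identification of $W$ with $\disk$) so that $T_2$ is uniformly analytic and of bounded geometry; (c) each new sub-component of $\complex\setminus T_2$ satisfies a positive $\tau$-length lower bound. Once (a)--(c) are in place, we may rescale $\tau$ by a positive constant on each new component to upgrade the $\tau$-length bound to $\geq \pi$, exactly as in the paragraph preceding Theorem \ref{Exists}.

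First I would verify (a) by inspection. Every edge of $G_j$ is a Euclidean segment on some horizontal ray $R_k=\{x+iy_k:x>0\}$, of length equal to the spacing $w_k=\min(y_k-y_{k-1},\,y_{k+1}-y_k)$; thus conditions (1)--(3) of bounded geometry are automatic, and non-adjacent edges on the same ray lie at distance $\geq w_k$ apart while edges on different rays are separated by vertical distance $\geq \min(w_k,w_{k'})$. The bounded geometry of ${\cal P}_j$, inherited from the bounded geometry of $T'$ via Lemma 4.1 of \cite{Bishop-classS}, descends to ${\cal Q}_j$ (which is obtained by merging adjacent pairs of ${\cal P}_j$-intervals), so adjacent ray spacings $w_k,w_{k+1}$ are comparable.

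Next I would apply Lemma \ref{transfer half-plane} with underlying tree $T'\subset\disk$, component $\Omega=U_j$, target $W=\rhp$, map $\tau=\Phi_j$, and added tree $T_0=G_j$. The internal edges of $G_j$, those of the form $[nw_k,(n+1)w_k]\times\{y_k\}$ with $n\geq 1$, have hyperbolic diameter in $\rhp$ equal to $\log((n+1)/n)\leq\log 2$. The boundary edges of $G_j$ are the initial segments $[0,w_k]\times\{y_k\}$, whose boundary endpoint $iy_k$ is the $\Phi_j$-image of a midpoint vertex of $T'$; by construction every such vertex is $\epsilon$-analytic (the midpoint of a segment or circular arc of $T$), and the Euclidean length $w_k$ of this initial segment is comparable to $\diam(\Phi_j(I\cup J))$, where $I,J$ are the two $T'$-edges meeting at the midpoint vertex, by the bounded geometry of ${\cal P}_j$. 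Lemma \ref{transfer half-plane} then yields a uniformly analytic forest in $\disk$; pulling back by the $\Psi^{-1}$-transfer already used in Section \ref{bg sec} extends this to the plane, giving $T_2$.

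The remaining and main step is (c). Each new complementary component $V$ of $\complex\setminus T_2$ inside $W$ is conformally equivalent, via the composition of $\Psi$ and $\Phi_j$, to a half-strip $S_k=\{x+iy:x>0,\ y_k<y<y_{k+1}\}\subset\rhp\setminus G_j$ of width $w=y_{k+1}-y_k$. The boundary of $V$ consists of (i) two boundary-edges lying on the base $\{0\}\times[y_k,y_{k+1}]$ of $S_k$, arising from the two halves of the central ${\cal P}_j$-interval and of Euclidean length comparable to $w$, and (ii) two infinite sequences of edges on the rays $R_k,R_{k+1}$, each of Euclidean length comparable to $w$ (by the bounded geometry of ${\cal Q}_j$). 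Normalizing $S_k$ to unit width by dilation, the conformal map $\sigma$ to $\rhp$ is fixed up to a Möbius transformation, $|\sigma'|$ is bounded below on the compact base, and $|\sigma'|$ grows exponentially along the rays as $x\to\infty$. Consequently each edge has $\tau$-length bounded below by a universal constant, which is the desired positive lower bound. The main obstacle is essentially just this last estimate near the corners of $S_k$, but since every edge is separated from the two corner ideal points by a definite amount in the normalized strip, the distortion theorems (Koebe and Schwarz--Christoffel) supply uniform control there.
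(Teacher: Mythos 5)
Your proposal is correct and takes essentially the same route as the paper: verify $G_j$ has bounded geometry and is uniformly analytic, transfer it via Lemma \ref{transfer half-plane}, and obtain the $\tau$-length lower bound from the exponential growth of the conformal map of a half-strip onto $\rhp$ (the paper invokes the explicit $\sinh$ formula; you invoke the same growth estimate with a brief remark about the corners). The extra detail you supply in steps (a)--(b) is a welcome fleshing-out of what the paper dismisses as ``obvious,'' but it is the same argument.
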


\begin{figure}[htb]
\centerline{
\includegraphics[height=2.5in]{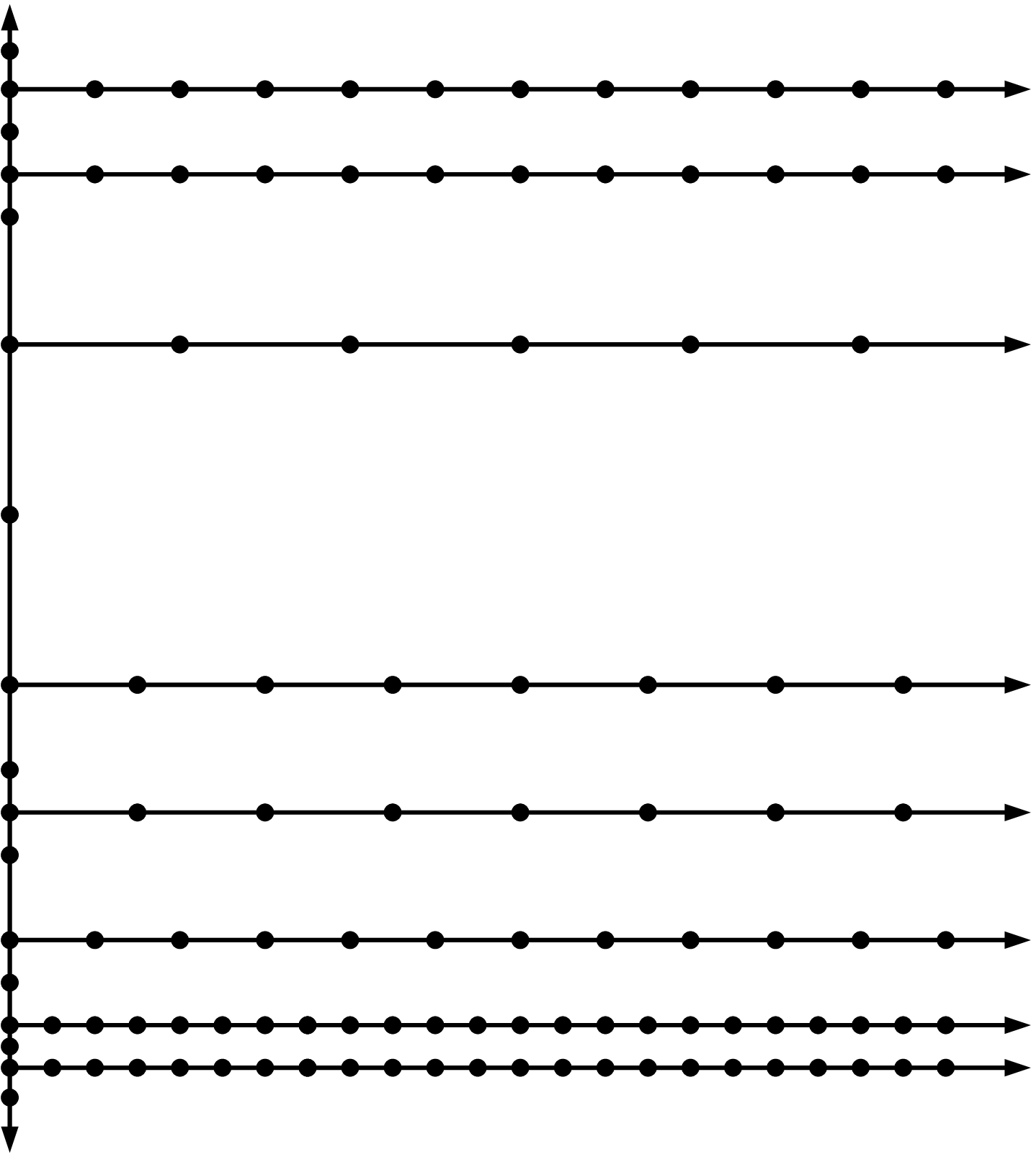}
}
\caption{ \label{AddLines}
Given a partition of $\partial \rhp$ where adjacent 
intervals have comparable lengths, add a horizontal 
ray in $\rhp$ at each partition point in $\partial \rhp$
 and place 
equally spaced  vertices on each ray, where the spacing equals 
the smaller width of the two adjacent half-strips.
It is easy to see that this gives a bounded geometry tree 
that satisfies a positive  $\tau$-length lower bound.
Note the vertices of the tree have maximum degree $3$.
}
\end{figure}

\begin{proof} 
It is obvious that $G_j$ has bounded geometry 
and is  uniformly analytic and that it satisfies all 
the other hypotheses of Lemma \ref{transfer half-plane}, 
so that 
$T_2 = T_1 \cup \cup_j \Phi_j^{-1}(G_j)$ 
is indeed  a bounded geometry, uniformly analytic tree.
To prove that  each connected component of $T_2$ 
satisfies a $\tau$-length lower bound, we simply note  
the conformal map of a half-strip to a half-plane has 
exponential growth (we can check this via an 
explicit formula involving $\sinh(z)$),
 so if the partition segments on the
boundary of the half-strip have Euclidean lengths 
bounded  below by a constant times the width of the 
strip, then the $\tau $-lengths  grow exponentially
with a uniform bound.  In particular,  the $\tau$-lengths 
are uniformly bounded away from zero.
\end{proof}

Verifying the $\tau$-length condition for a half-strip above
is simple because there is an explicit formula for
the conformal map to $\rhp$.  One could also use the 
more geometrical and more general 
Lemma \ref{tube bound}, which will be stated and proved later.

\section{  Proof of Theorem \ref{App Omega}: part 3,  final details } 
\label{final step}

The construction in the previous section 
created a  forest in $\rhp$  with  infinitely many complementary 
components, each of which satisfies a positive $\tau$-length 
lower bound. 
We now  apply  this construction to the partitions
$\{ {\cal Q}_j \}$ corresponding to the analytic vertices of 
the tree $T'$ constructed at the beginning of Section \ref{tau sec}.
Using Lemma \ref{transfer half-plane}, we can 
attach a conformal image of the forest created in 
Lemma \ref{infinitely many} to $T'$
to give a bounded geometry tree.  A positive 
 $\tau$-length 
lower bound holds automatically by the conformal invariance of 
this condition.  
As  before, we can  multiply  $\tau$ by positive constants 
on each component, so that the $\tau$-length
lower bound is $\pi$.
We then apply Theorem \ref{Exists} and 
multiply the resulting function by $e^\rho$ to get:

\begin{thm} \label{App Omega Cosh}
 Suppose  $\Omega$ is as in Theorem \ref{QR}.
 Then there is a $f \in \classS_{2,0}$
and a $K$-quasiconformal map $\phi$ of the plane so that
$ f \circ \phi = \cosh \circ \, {\tau}$   on  $\Omega$
 and $\phi$ is  conformal on $ \Omega \setminus T(r)$.
The constants $K, r < \infty $ depend on $\rho$
but are otherwise independent of $\Omega$ and $\tau$.
The function $f$  has  no finite asymptotic
values, exactly two critical values,
 $\pm e^\rho$, and   every critical
point has degree $\leq 12$.
\end{thm}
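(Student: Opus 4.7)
The plan is to assemble the ingredients developed in Sections \ref{bg sec}--\ref{final step} and then invoke Theorem \ref{Exists}. Start with the model domain $\Omega(\rho)$ and its boundary $\Gamma = \partial \Omega(\rho)$, equipped with the conformal partition given by the preimages of $\rho + i\pi\integers$ under $\tau$. By the distortion theorems for conformal maps recalled in Section \ref{review modulus}, $\Gamma$ is a uniformly analytic forest whose vertices are also analytic with constants depending only on $\rho$.

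First I would apply Lemma \ref{build T1} to obtain a bounded geometry, uniformly analytic tree $T_1 \supset \Gamma$ whose new edges lie in $W = \complex \setminus \overline{\Omega(\rho)}$ and add no vertices on $\Gamma$. Next, subdivide every new edge of $T_1$ by its Euclidean/circular midpoint to obtain $T'$; this produces an auxiliary set of analytic degree-$2$ vertices (with a uniform analyticity constant) while preserving bounded geometry. Then, on each ``new'' complementary component $U_j$ of $T'$ (i.e., one not coming from the original components of $\Omega(\rho)$), I would use the conformal map $\Phi_j : U_j \to \rhp$ sending $x_j \in E$ to $\infty$, and pull back the forest $G_j$ produced by Lemma \ref{infinitely many}. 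Lemma \ref{transfer half-plane} applies here precisely because the boundary edges of $G_j$ emanate from the analytic vertices of $T'$ with the required diameter-comparability, while the internal edges have uniformly bounded hyperbolic diameter. The resulting tree $T_2$ is a bounded geometry, uniformly analytic tree of maximum vertex degree $3$.

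Now for each connected component of $\complex \setminus T_2$ I rescale $\tau$ by a positive constant so that every side has $\tau$-length at least $\pi$. On the components of $\Omega(\rho)$ no rescaling is needed (the $\tau$-lengths are already exactly $\pi$); on the new components the positive $\tau$-length lower bound given by Lemma \ref{infinitely many} is conformally invariant, so multiplying $\tau$ by a suitable positive constant on each component achieves the uniform $\pi$-bound. At this point $(T_2, \tau)$ satisfies the hypotheses of Theorem \ref{Exists}, which produces $g \in \classS_{2,0}$ with critical values $\pm 1$, no finite asymptotic values, a quasiconformal $\phi_0$ with $g \circ \phi_0 = \cosh \circ \tau$ off $T_2(r)$, and constants $K, r$ depending only on the bounded geometry of $T_2$ (hence only on $\rho$). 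Since $T_2(r) \cap \Omega(\rho) = \emptyset$ for $r$ small enough relative to the uniform separation between $\Gamma$ and the added edges (which can be arranged because all added edges lie in $W$), the identity $g \circ \phi_0 = \cosh \circ \tau$ holds on all of $\Omega$. Finally I set $f(z) = e^\rho g(z)$ and $\phi = \phi_0$; this multiplies the critical values to $\pm e^\rho$, multiplies $\cosh \circ \tau$ by $e^\rho$, and preserves quasiconformality and all the degree/asymptotic-value statements. The maximum vertex degree of $T_2$ is $3$, so clause (4) of Theorem \ref{Exists} yields critical points of degree at most $4 \cdot 3 = 12$.

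The only subtlety I anticipate is bookkeeping the geometric constants through the three conformal transfers (from $W$ to $\disk$ in Lemma \ref{build T1}, then from each new component of $\disk \setminus T'$ to $\rhp$ in Lemma \ref{infinitely many}, and finally back to $\complex$); these all reduce to Lemma \ref{transfer half-plane} provided that adjacent sides map to intervals of comparable length, which ultimately follows from the bounded geometry of the original conformal partition of $\partial \Omega(\rho)$. Once uniform bounded geometry and the $\tau$-length bound $\geq \pi$ are both in place, Theorem \ref{Exists} is essentially a black box, and the remaining assertions about $f$ follow directly from its conclusions together with the rescaling by $e^\rho$.
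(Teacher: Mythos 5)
Your construction of $T_2$ (Lemma \ref{build T1} $\Rightarrow$ midpoints $\Rightarrow$ Lemma \ref{infinitely many} via the $\Phi_j$ $\Rightarrow$ Lemma \ref{transfer half-plane}), the rescaling of $\tau$ on the new components, the invocation of Theorem \ref{Exists}, the multiplication by $e^\rho$, and the $4\cdot 3 = 12$ degree count all match the paper's proof, which is essentially what you describe. However, there is a genuine error in the step where you argue that the functional identity propagates to all of $\Omega$: you claim that \emph{``$T_2(r)\cap\Omega(\rho)=\emptyset$ for $r$ small enough relative to the uniform separation between $\Gamma$ and the added edges (which can be arranged because all added edges lie in $W$)''}. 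This cannot be true. The tree $T_2$ contains $\Gamma = \partial\Omega(\rho)$ itself, not just the edges added inside $W$; and $T_2(r)$ is by definition a union of tubular neighborhoods $e(r)$ over \emph{all} edges $e$ of $T_2$, including those of $\Gamma$. Each such $e(r)$ straddles the boundary of $\Omega(\rho)$, so $T_2(r)\cap\Omega(\rho)$ is always nonempty for any $r>0$. Moreover, $r$ is not a free parameter you can shrink: it is produced by Theorem \ref{Exists} and depends on the bounded geometry constants of $T_2$.

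What Theorem \ref{Exists} actually delivers is the identity $g\circ\varphi=\cosh\circ\tau_{T_2}$ \emph{off} $T_2(r)$, and the paper's Theorem \ref{App Omega Cosh} should be read in that spirit (the conformality clause ``$\phi$ is conformal on $\Omega\setminus T(r)$'' already acknowledges that $T(r)$ meets $\Omega$). The correct replacement for your disjointness claim appears later in the paper: a modulus argument shows $T(r)\cap\Omega(A\rho)=\emptyset$ for $A$ large depending on $\rho$ and $r$ (not that $T(r)$ misses $\Omega(\rho)$), and then a quasiconformal stretch $G=\tau_j^{-1}\circ H\circ\tau_j$ compresses $\Omega(\rho,A\rho)$ into $\Omega(\rho,2\rho)$ so that the composite $\varphi\circ G$ is conformal on $\Omega(2\rho)$. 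That step is needed to pass from Theorem \ref{App Omega Cosh} to Theorem \ref{App Omega}, but even at the level of the present theorem you should not assert that $T_2(r)$ avoids $\Omega(\rho)$. Finally, note that even if your disjointness claim were correct, it would yield the identity only on $\Omega(\rho)$, not on all of $\Omega$ as you conclude; strictly, what holds is the identity off $T_2(r)$ with $\tau_{T_2}$ equal to $\tau-\rho$ on each $\Omega_j(\rho)$, and multiplying by $e^\rho$ replaces $\cosh(\tau-\rho)$ by $e^\rho\cosh(\tau-\rho)$, which is only approximately $\cosh\tau$ when $\mathrm{Re}\,\tau\gg\rho$ --- this approximation is absorbed later by the quasiconformal correction $\psi$ that converts $\cosh$ to $\exp$.
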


There are a few slight differences between this and 
Theorem \ref{App Omega}, but it is easy to deduce 
Theorem \ref{App Omega} from Theorem \ref{App Omega Cosh}
as follows.
 
First, Theorem \ref{App Omega Cosh} 
  uses $\cosh$ instead of $\exp$. However, these 
functions are almost the same in $\rhp$ away 
from the boundary.  Consider the 
map $z \to \frac 12 (z + \frac 1z)$; this is a 
conformal 
homeomorphism of $\{|z| >1\}$ to $ U=\complex \setminus [-1,1]$
and maps  the circle $C=\{|z| = e^\rho\}$ to some ellipse $E$. 
Define a quasiconformal map $\psi$  that equals the inverse
of this map  outside $E$ and extends it  diffeomorphically to the
interior. 
Since $\cosh = \frac 12 (e^z + e^{-z})$ we get $\exp(z) 
= \psi(\cosh(z))$ 
when  $|\exp(z)|> e^\rho$. Therefore if we use the 
measurable Riemann mapping theorem to find a quasiconformal 
$\varphi$ so that $F=\psi \circ f \circ \varphi$, 
this function  satisfies Theorem \ref{App Omega Cosh}
with $\cosh$ replaced by $\exp$.

Second, Theorem \ref{App Omega Cosh} only claims that $\varphi$ is 
conformal off $T(r)$ whereas Theorem \ref{App Omega} says it
is conformal on all of $\Omega(2\rho)$.  The first step in  verifying
this stronger condition is to prove:

\begin{lemma}
With notation as above, 
there is a $A < \infty$, depending only  on $\rho$ and $r$ so that 
$T(r) \cap \Omega(A  \cdot \rho)  = \emptyset$. 
\end{lemma}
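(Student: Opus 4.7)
The plan is to show that any edge $e$ of $T$ whose neighborhood $e(r)$ can meet $\Omega(\rho)$ must sit close to $\partial\Omega(\rho)$, and that for such edges Koebe distortion forces $\mathrm{Re}(\tau_j(z))$ to stay within an additive constant $C(r)$ of $\rho$ for all $z \in e(r)\cap\Omega_j(\rho)$. Setting $A=1+C(r)/\rho$ (or any larger value) will then yield $T(r)\cap\Omega(A\rho)=\emptyset$.

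Step 1. Since $\partial\Omega_j(\rho)$ is uniformly analytic with constants depending only on $\rho$ (each edge is $\tau_j^{-1}$ applied to a segment of length $\pi$ on a vertical line), Schwarz reflection extends $\tau_j$ to a conformal map on a neighborhood of $\partial\Omega_j(\rho)$. For each boundary edge $f\subset\partial\Omega_j(\rho)$, this extension is conformal on a collar of width $c\,\diam(f)$ for a universal $c>0$, and by the Koebe distortion theorem, $|\tau_j'|$ varies by a bounded factor on this collar. In particular $|\tau_j'(z)|\diam(f) \simeq \pi$ for every $z$ in the collar containing $f$.

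Step 2. Classify the edges $e$ of $T$ with $e(r)\cap\Omega(\rho)\neq\emptyset$. If $e\subset\partial\Omega_j(\rho)$, or if $e$ is an added edge in $W$ meeting $\partial\Omega_j(\rho)$ at a vertex $v$, then bounded geometry gives $\diam(e)\simeq \diam(f)$ for any boundary edge $f$ adjacent to $e$ (by Lemma~\ref{build T1} and Lemma~\ref{infinitely many}). If $e$ is instead an added edge not sharing a vertex with $\partial\Omega(\rho)$, then bounded geometry property (4) of $T$ separates $e$ from each adjacent boundary edge by $\epsilon\min(\diam(e),\diam(f))$; a short path in $T$ of bounded length (controlled by $r$ and the bounded geometry constants) connects $e$ to $\partial\Omega(\rho)$ through edges of comparable diameter, so in all cases a point $z\in e(r)\cap\Omega_j(\rho)$ lies within $C(r)\diam(e)$ of some vertex $v\in\partial\Omega_j(\rho)$, with $\diam(e)\simeq|\tau_j'(v)|^{-1}$.

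Step 3. Apply Koebe's theorem to the extended $\tau_j$ on the neighborhood provided by Step~1. For $z\in e(r)\cap\Omega_j(\rho)$ with $|z-v|\lesssim_r\diam(e)$,
\[
|\tau_j(z)-\tau_j(v)| \;\lesssim_r\; |\tau_j'(v)|\cdot\diam(e) \;\simeq\; \pi.
\]
Since $\mathrm{Re}(\tau_j(v))=\rho$, this gives $\mathrm{Re}(\tau_j(z))\le \rho + C(r)$, that is, $|F(z)|\le e^{\rho+C(r)}$. Hence $T(r)\subset\complex\setminus\Omega(\rho+C(r))$, and taking $A=1+C(r)/\rho$ completes the proof.

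The main obstacle is Step~2: ensuring that edges of $T$ that lie deep in $W$ cannot have their $r$-neighborhood reach into $\Omega(\rho)$. This requires combining property (4) of bounded geometry with the explicit structure of the Whitney-style paths built in Section~\ref{bg sec} (edges of $T$ near $\partial\Omega(\rho)$ have diameter comparable to the nearby boundary edges), together with Lemma~\ref{separation lemma} to convert the modulus separation into a genuine Euclidean separation in terms of $r$.
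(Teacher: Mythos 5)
You take a genuinely different route from the paper. The paper's proof is a one-step modulus computation: the modulus of the path family in $\rhp + \rho$ separating a $\pi$-segment on $\{x = \rho\}$ from the vertical line $\{x = A\rho\}$ tends to infinity with $A$, and conformal invariance plus Lemma~\ref{separation lemma} immediately gives $\dist(I, \partial\Omega(A\rho))/\diam(I) \to \infty$ uniformly over partition arcs $I$. Your proof instead extends $\tau_j$ across $\partial\Omega_j(\rho)$ by Schwarz reflection and uses Koebe distortion to bound $\mathrm{Re}\,\tau_j(z) \le \rho + C(r)$; the two conclusions are equivalent via $A = 1 + C(r)/\rho$, but the paper's argument requires no distortion estimate on a collar.

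However, the gap you flag in Step 2 is a real one and is load-bearing. Both Steps 1 and 3 need the two-sided comparison $\diam(e) \simeq \diam(J)$ for a nearby partition arc $J$ of $\partial\Omega_j(\rho)$: the Schwarz-reflected extension of $\tau_j$ in Step 1 is conformal only on a collar of width $\simeq \diam(J)$, so if $\diam(e) \gg \diam(J)$ then the Koebe estimate in Step 3 cannot even be applied to a point $z$ at distance $\sim r\,\diam(e)$ from the vertex $v$, let alone give $|\tau_j'(v)|\diam(e) \simeq \pi$. Bounded geometry condition (4), together with $\dist(e, J) \le (1+r)\diam(e)$, gives only $\diam(J) \le C(r,\epsilon)\,\diam(e)$, not the reverse inequality $\diam(e) \lesssim \diam(J)$ that Step 3 requires. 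Closing this would require invoking the specific structure of the constructed tree (the added edges touch $\partial\Omega(\rho)$ only at the approximate centers $v_j$, where condition (2) of Lemma~\ref{Build T1 in disk} calibrates diameters, while internal edges have bounded hyperbolic diameter in $W$); the discussion around Figure~\ref{TauBound} is precisely about how the $\Psi$-scale in $W$ and the $\tau_j$-scale on $\partial\Omega_j(\rho)$ can diverge, so this is not automatic. The paper's modulus argument sidesteps this calibration issue, since Lemma~\ref{separation lemma} bounds $\dist(I, \partial\Omega(A\rho))$ against $\diam(I)$ directly, without ever comparing $\diam(e)$ to $\diam(I)$.
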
 

\begin{proof}
The proof is a modulus argument. The modulus of the 
path family in $\rhp+\rho$  separating 
 a  segment of length $\pi$  on 
$ \{ x =\rho\} $ from the vertical line $\{ x=A \rho\}$  is easily seen to 
increase to infinity as $A$ increases to infinity. 
Thus by conformal invariance of modulus and
 Lemma \ref{separation lemma} 
$$ 
\frac { \dist(I, \partial \Omega(A\rho))}{ \min(\diam(I), 
\diam (\partial \Omega (A \rho)))} = 
\frac { \dist(I, \partial \Omega(A\rho))}{ \diam(I)  } \to \infty 
$$ 
as $A \to \infty$. This proves the intersection
is empty if $A$ is large enough.

\end{proof} 

We can easily choose a quasiconformal map $H: \rhp \to \rhp$
 so that $H$ 
 \begin{enumerate}
\item  is the identity on $\{ 0 < x < \rho\}$, 
\item is of the form $(x,y) \to (ax+b, y)$  mapping
$\{ \rho < x < 2 \rho\}$ to $\{ \rho < x <  A\rho\}$, 
\item is a horizontal translation  from $\{ x > 2 \rho\}$
     to $\{ x >A \rho \}$.  
\end{enumerate}
 Defining  $G =  \tau_j^{-1} \circ H \circ \tau_j$ on each 
$\Omega_j$ and letting $G$ be the identity elsewhere gives 
a quasiconformal map of the plane to itself so that 
$  \tilde \varphi = \varphi \circ G$ is conformal off $\Omega( 2 \rho)$
and satisfies all the other conclusions of Theorem \ref{App Omega}.

Finally, the tree we have  explicitly 
constructed has maximal vertex degree $3$.
 Hence by the folding theorem (Theorem \ref{Exists}), the corresponding entire function will
have critical points of degree at most $12$.
(As noted at the end of Section \ref{QC folding}, 
this bound  can be 
improved to 4 by some modifications to the construction 
in \cite{Bishop-classS}.)

\section{Two  estimates on $\tau$-length \label{tau estimate} }

In this section, we give two explicit estimates for 
$\tau$-lengths that we will use during the proof of 
Theorem \ref{details} in the next section. 

 Given two disjoint intervals $K, J$ on 
the real line, let $M(K,J) $ be the modulus of the 
path family in the upper half-plane, $\uhp$, that separates $K$ 
from $J$ (these are the paths in $\uhp$ with  two endpoints 
in $\reals \setminus (K \cup J)$, exactly one of which 
separates $K$ and $J$). This is the reciprocal of the 
modulus of the path family that joins $K$ and $J$ (paths
in $\uhp$ that have one endpoint in each of $K$ and $J$). 

The first result is helpful for domains that look 
like ``tubes'' built by attaching quadrilaterals
of bounded modulus end-to-end 
(e.g., as the half-strip is a union of 
squares joined end-to-end).

\begin{lemma} \label{tube bound}
Suppose 
 $K=(-\infty,-1]$  and $\{ J_j\}$ is
a sequence of disjoint intervals in $[1, 
\infty)$ such that 
 $ M= \sup_j M(J_j, K )  < \infty$.  
Also assume the $\{J_j\}$ are in increasing order
(i.e., $J_{j+1}$ is to the right of $J_j$).
Then the lengths of $J_j$ grow 
exponentially in $j$; in particular, these 
lengths are uniformly bounded below by a constant
depending only on $M$.  
\end{lemma}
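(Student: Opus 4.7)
The plan is to use an affine Möbius normalization to reduce the problem to a one-parameter family of moduli, and then convert the upper modulus bound into a geometric growth condition via the disjointness and ordering of the $J_j$.

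First, write $J_j = [a_j, b_j]$ and apply the affine map $\phi_j(z) = (z+1)/(b_j+1)$, which maps $\uhp$ to itself. By conformal invariance of modulus,
$$
M(J_j, K) = M\bigl((-\infty, 0],\; [t_j, 1]\bigr), \qquad t_j := \frac{a_j+1}{b_j+1} \in (0,1].
$$
Thus the modulus depends only on the single parameter $t_j$, and I reduce to analyzing the function $m(t) := M((-\infty,0], [t,1])$ on $(0,1)$.

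Second, I will argue that $m$ is continuous and monotonically increasing, with $m(t)\to 0$ as $t\to 0^+$ and $m(t)\to \infty$ as $t\to 1^-$. Monotonicity follows either by the Parallel Rule combined with an explicit admissible metric, or by noting that separating path families shrink as the intervals move apart in cross-ratio. The two boundary values can be verified directly: as $t\to 0^+$ the intervals share the endpoint $0$, so the separating family degenerates, giving $m\to 0$; as $t\to 1^-$ the interval $[t,1]$ shrinks to a point at Euclidean distance $1$ from $(-\infty, 0]$, and arbitrarily small half-annuli in $\uhp$ centered at that point have arbitrarily large modulus by the Round Annuli rule, forcing $m\to\infty$ (alternatively one can replace $(-\infty,0]$ by $[-R,0]$ and invoke Lemma \ref{two intervals}, then let $R\to\infty$). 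From $m(t_j)\le M$ and these properties I obtain a uniform bound $t_j \le 1-c$ for some constant $c=c(M)>0$.

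Third, I iterate. The inequality $t_j \le 1-c$ is equivalent to
$$
b_j + 1 \;\ge\; \frac{a_j + 1}{1-c}.
$$
Since the $J_j$ lie in $[1,\infty)$, are pairwise disjoint, and are in increasing order, $a_{j+1}\ge b_j$. Combining these two facts gives $b_{j+1}+1 \ge (b_j+1)/(1-c)$, so by induction $b_j + 1 \ge 2\,(1-c)^{-(j-1)}$, which is exponential in $j$. Finally, $|J_j| = (b_j+1)-(a_j+1) \ge c\,(b_j+1)$, so the lengths also grow exponentially, and in particular are bounded below by $2c$, a constant depending only on $M$.

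The main obstacle is establishing the two limiting values of $m(t)$; once continuity, monotonicity and these limits are in hand, the rest is an elementary iteration. I expect these modulus facts to follow quickly from the tools collected in Section \ref{review modulus} (admissible metrics concentrated near the shared endpoint for $t\to 0$, and the Round Annuli plus Reciprocity rules near the degenerate point for $t\to 1$), without any further delicate estimates.
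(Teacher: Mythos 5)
Your proof is correct and, at its core, the same as the paper's: you both show that the uniform upper bound on $M(J_j,K)$ forces $|J_j|$ to be at least a fixed fraction of its distance from $-1$ (the paper by exhibiting the explicit half-annulus $\{z\in\uhp:\epsilon a_j<|z-a_j|<a_j\}$ and computing its modulus, you by reducing via the affine normalization to the one-parameter modulus $m(t)$ and invoking $m(t)\to\infty$ as $t\to 1^-$, which in turn rests on the same small-annulus estimate), and then iterate using the disjointness and ordering via $a_{j+1}\geq b_j$. The reduction to $m(t)$ is a tidy presentational variant, but the continuity, monotonicity, and the limit $m(t)\to 0$ as $t\to 0^+$ you set out to establish are superfluous: only the blow-up of $m$ near $t=1$ is needed to conclude $t_j\le 1-c(M)$.
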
 

\begin{proof}
Fix some $ J_j = [a_j, b_j]$ with $1 < a_j < b_j$.
 If $ b_j-a_j \leq \epsilon a_j$ for some $0 < \epsilon <1$, 
then $J_j$ is separated from $K$ by the annulus
$$ A = \{ z \in \uhp :  \epsilon a_j < |z-a_j| < a_j \}.$$
Any path connecting different components of 
$\partial A \cap  \reals$ also separates $J$ and
$K$ so the modulus of the first family is 
a lower bound for the modulus of the second. However, 
this modulus is $ \frac 1{2 \pi} \log \frac 1\epsilon$, 
so $\epsilon \geq \epsilon_M = \exp(- 2 \pi M)$.
Hence $b_j> (1+\epsilon_M)a_j$. 
By induction $|J_j| = b_j-a_j \geq \epsilon_M (1+\epsilon_M)^{j-1} $,
as desired. 
\end{proof} 

by taking a ``tube domain'' 
and attach  ``rooms'' along the sides of the tube.

\begin{lemma} \label{add rooms}
Suppose $K=[s,t]$, $I=[x,y], J=[u,v]$ are intervals
 on the real line 
so that   $t  \leq  x  <  u$.  If  $M(J,K) \leq M(I,K)$, then 
$|I| \leq |J|$.
\end{lemma}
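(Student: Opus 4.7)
The plan is to prove the contrapositive: if $|I| > |J|$, then $M(J, K) > M(I, K)$. The argument reduces to two monotonicity properties of the map $L \mapsto M(L, K)$ defined on intervals $L$ disjoint from $K$ and lying to its right.

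First I would record the \emph{inclusion monotonicity}: if $L \subsetneq L'$, then $M(L', K) < M(L, K)$. The non-strict version is immediate from the extension rule, since every curve in $\uhp$ separating $L'$ from $K$ also separates $L$ from $K$; the strict version uses that $M(\cdot, K)$ depends only on the cross-ratio of the four endpoints of $L \cup K$ and is a strictly monotone function of it (as seen, for instance, from the Schwarz--Christoffel integrals in the proof of Lemma \ref{two intervals}, where the rectangle's aspect ratio varies strictly with any strictly monotone deformation of the endpoints).

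Second I would prove the \emph{translation monotonicity}: if $L = [a, b]$ and $L_\delta = [a + \delta, b+\delta]$ with $\delta > 0$ both lie strictly to the right of $K$, then $M(L, K) < M(L_\delta, K)$. After normalizing $K = [0, 1]$ by a M\"obius transformation of $\uhp$ (using conformal invariance), the cross-ratio of $\{0,1,a,b\}$ is $\lambda(L) = a(b-1)/(b(a-1))$, and an elementary calculation gives
\[
\frac{\partial}{\partial \delta} \log \lambda(L_\delta) = \frac{1}{(b+\delta)(b+\delta-1)} - \frac{1}{(a+\delta)(a+\delta-1)} < 0
\]
whenever $1 < a + \delta < b + \delta$, so $\lambda(L_\delta)$ is strictly decreasing in $\delta$. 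Since $\lambda$ is large precisely when $L$ is close to $K$ (where the separating modulus is small), strict monotonicity of $M$ in $\lambda$ (going the opposite way) yields the translation monotonicity.

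With these two facts, the lemma follows quickly. Suppose $|I| > |J|$ and let $I^* = [u, u + |I|]$ be the rightward translate of $I$ whose left endpoint coincides with the left endpoint of $J$. The translation distance is $u - x > 0$, so translation monotonicity gives $M(I, K) < M(I^*, K)$. Since $|I^*| = |I| > |J|$ and both intervals share the left endpoint $u$, we have $J \subsetneq I^*$, and inclusion monotonicity gives $M(I^*, K) < M(J, K)$. Chaining the two inequalities yields $M(I, K) < M(J, K)$, contradicting the hypothesis $M(J, K) \leq M(I, K)$. The main obstacle is the strict monotonicity of $M$ in the cross-ratio underlying both steps; this is the place where soft extremal-length reasoning has to be supplemented by the explicit rectangle model from Lemma \ref{two intervals}, to rule out degeneracies where an enlargement of the separating path family fails to strictly increase the modulus.
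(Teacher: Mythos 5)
Your proof is correct, but it takes a genuinely different route from the paper's. The paper translates so that $t=0$ and then applies a \emph{single dilation} by $\lambda=u/x>1$: since the dilation fixes $0$, it simultaneously enlarges $K$ (as $\lambda K\supset K$) and carries $x$ to $u$, giving $\lambda I\supsetneq J$; conformal invariance then yields $M(J,K)>M(\lambda I,\lambda K)=M(I,K)$ in one line, with the exact equality $M(\lambda I,\lambda K)=M(I,K)$ coming for free because dilations are conformal. Your argument instead decomposes the comparison into a pure translation of $I$ to $I^*=[u,u+|I|]$ followed by the inclusion $J\subsetneq I^*$, which forces you to establish translation-monotonicity by hand; you do this via an explicit cross-ratio derivative, which is a valid but longer computation that the dilation trick sidesteps entirely (in fact your translation-monotonicity can itself be deduced by the same dilate-and-compare device). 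Both arguments ultimately invoke the strict monotonicity of $M(\cdot,K)$ under strict inclusion, which you flag honestly and which the paper passes over silently; your cross-ratio/rectangle discussion gives one way to pin that down. In short: your route buys a more explicit, self-contained monotonicity statement, while the paper's buys brevity by exploiting scale invariance of the modulus.
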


\begin{proof}
We prove the contrapositive. Suppose  $|I| > |J|$.
After translating (if necessary) we may assume 
that $t=0$. Then dilate by $\lambda  = u/x > 1$. 
Note that  $K \subset \lambda K$ 
 and $J \subset \lambda I$ (strictly), so 
using the monotonicity  and conformal invariance of modulus, 
we deduce
$ M(J,K) >  M(\lambda I, \lambda K) = M(I,K).$
\end{proof}

\section{Proof of  Theorem \ref{details} }
\label{proof single comp}

In this section, we improve Lemma \ref{infinitely many}
 by showing that instead of creating infinitely many 
complementary components, we can accomplish the 
same result using a single complementary component. 

\begin{lemma} \label{single component}
Suppose ${\cal Q} $ is a  bounded 
geometry partition of $\partial \rhp$ with 
constant $C$ 
(i.e., adjacent intervals have length ratio 
at most $C$). 
Then there is a bounded geometry, uniformly analytic 
forest  $T'' \subset \rhp$
which satisfies the hypotheses 
of Lemma \ref{transfer half-plane} and so that 
$W'' =  \rhp \setminus T''$  consists of a single
component  that  
 satisfies a positive  $\tau$-length lower bound.
The constants associated to $T''$ depend only on $C$.
Moreover, the boundary edges of $T$ meet $\partial \rhp$
exactly at the partition points of ${\cal Q}$.
\end{lemma}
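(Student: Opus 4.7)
The plan is to construct $T''$ as a bounded-geometry forest of long, finite ``teeth'' attached to $\partial \rhp$ at the partition points of $\mathcal{Q}$, mimicking the local structure of the disjoint half-strips of Lemma \ref{infinitely many} but truncating each ray at finite depth so that $W'' = \rhp \setminus T''$ remains connected. The positive $\tau$-length lower bound will then follow from a two-regime argument: the half-strip structure near $\partial \rhp$ handles most boundary arcs, while a modulus estimate via Lemmas \ref{tube bound} and \ref{add rooms} handles arcs near the tooth tips.

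For the construction, I would fix a large absolute constant $L$ (to be chosen in terms of $C$). At each partition point $p_j$ of $\mathcal{Q}$, set $w_j = \min(|\mathcal{Q}_j^-|, |\mathcal{Q}_j^+|)$, where $\mathcal{Q}_j^\pm$ are the two partition intervals of $\mathcal{Q}$ adjacent to $p_j$, and attach the horizontal segment $e_j = [p_j, p_j + L w_j] \subset \overline{\rhp}$, subdivided by vertices at equal spacing $w_j$. Set $T'' = \bigcup_j e_j$. Adjacent edges of $T''$ are comparable in length by the bounded-geometry hypothesis on $\mathcal{Q}$, and non-adjacent edges are separated by modulus bounded uniformly below (Lemma \ref{separation lemma}), so $T''$ is a uniformly analytic forest of bounded geometry with maximum vertex degree $3$. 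Since $L w_j$ is, for appropriate $L = L(C)$, small compared to the spacing between adjacent $p_j$'s, the teeth are pairwise disjoint, hence $W''$ is a single simply connected domain. The hypotheses of Lemma \ref{transfer half-plane} hold: the edges of $e_j$ touching $p_j$ are boundary edges in the sense of that lemma (length comparable to adjacent $\mathcal{Q}$-intervals), and the remaining edges along $e_j$ are internal (bounded hyperbolic diameter in $\rhp$). By construction the boundary edges meet $\partial \rhp$ exactly at the partition points of $\mathcal{Q}$.

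To verify the positive $\tau$-length lower bound, let $\Phi: W'' \to \rhp$ be conformal, and let the boundary arcs of $W''$ (sub-arcs of partition intervals of $\mathcal{Q}$, and individual edges on tooth sides) map to an ordered sequence of intervals $\{I_k\}$ on $\partial \rhp$. For arcs near $\partial \rhp$, namely the partition intervals and the edges lying on the bases of the teeth, the local geometry of $W''$ is close to a half-strip of width $\asymp w_j$ with vertex spacing $w_j$, precisely as in Lemma \ref{infinitely many}; the explicit $\sinh$-type conformal map of such a half-strip to $\rhp$ yields an exponential lower bound on the corresponding $|I_k|$. For arcs near the tooth tips I would apply Lemma \ref{tube bound} to the subsequence of image intervals corresponding to successive edges along a single tooth (and to its two neighboring teeth), arguing that the modulus in $W''$ of the path families separating these arcs from a fixed base arc is bounded above by a constant depending only on $C$ and $L$, so that Lemma \ref{tube bound} produces exponential growth, and in particular a uniform lower bound, for the $|I_k|$.

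The main obstacle is this modulus bound at the tooth tips. Unlike in Lemma \ref{infinitely many}, where each half-strip is fully closed off from its neighbors, the channels between adjacent teeth in $W''$ are only partially closed: a path in $W''$ separating two arcs along a single tooth can escape past the tooth tip through the bulk of $W''$ and return. Choosing $L$ large forces such an escape path to traverse a long, narrow portion of $W''$ with large extremal length, so its contribution to the separating modulus is controlled. Lemma \ref{add rooms} is the tool for converting this into what I need: applied in the image half-plane $\rhp$, it says that adjoining the ``room'' corresponding to the bulk of $W''$ past the tooth tips to the ``tube'' formed by the near-boundary corridor cannot cause the image intervals $|I_k|$ to fall below the lengths forced by the tube alone. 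Careful bookkeeping of the constants then yields the uniform $\tau$-length lower bound depending only on $C$.
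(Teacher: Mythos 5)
Your construction is genuinely different from the paper's, and the difference is where the proof fails. The paper does not attach short disjoint ``teeth''; it builds a long meandering \emph{central tube} inside a central region, subordinate to a symmetrized, refined partition (Lemma \ref{symmetric partition}), and then connects each partition interval of $\mathcal{Q}$ to that tube by a trapezoid with a small opening. The tube is the essential structure: it supplies a reference arc $K$ lying deep along a long, narrow channel, so that $M(I_0,K)$ is provably large (estimate (\ref{MIK est})) while $M(J,K)$ for a trapezoid arc $J$ is provably smaller; Lemma \ref{add rooms} then forces $|\Phi(J)| \ge |\Phi(I_0)|$. Your proposal has no analogue of this tube, and that is exactly the piece that cannot be dispensed with.

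Concretely, the gap is in your $\tau$-length argument. First, the appeal to the $\sinh$ expansion of a half-strip is the wrong tool here: that expansion gives the growth of $\tau$-lengths \emph{deeper into} a \emph{closed} half-strip (as used in Lemma \ref{infinitely many}, where each strip is its own complementary component with its own conformal map), whereas in your $W''$ the arcs $I_j$ sit at the closed bottom of a corridor of modulus $\asymp L$ inside a single global domain, and their images under the single map $\Phi\colon W''\to\rhp$ are roughly $e^{-cL}$ times the local $\Phi$-scale at the corridor opening. That local scale is not bounded below uniformly in $j$ just by the bounded-geometry hypothesis on $\mathcal Q$; for partitions whose intervals shrink over a long stretch (while still satisfying the bounded-geometry ratio $\le C$) the openings shrink and so do the $\tau$-lengths, with a constant that depends on the partition and not only on $C$. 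Second, the sentence ``Choosing $L$ large forces such an escape path to traverse a long, narrow portion of $W''$'' is not true for your configuration: because the teeth are short (length $Lw_j$) and disjoint, the region past the tooth tips is essentially an unobstructed half-plane, not a narrow channel, so separating curves can slip around the tips cheaply and the needed modulus upper bound on $M(J,K)$ has no reason to hold. To make the Lemma \ref{add rooms} mechanism work you would need a $K$ for which $M(I_0,K)$ is provably at least $M(J,K)$ for every boundary arc $J$; without a long, thin structure between $I_0$ and the rest of the domain there is no such $K$. This is precisely what the central tube provides, and why the paper's construction (central region, symmetric refined partition, meandering tube, trapezoids with small openings) is more elaborate than a forest of short teeth.
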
 

Given the lemma, we can complete the proof of Theorem \ref{details}
just as we finished the proof of Theorem \ref{App Omega} in 
Section \ref{final step}. Briefly, we had constructed a 
tree $T'$ that contained the analytic arcs $\partial \Omega(\rho)$
as well as arcs that connected the various components of 
$\partial \Omega(\rho)$. The complementary components of 
$T'$ consist of the  $N$ components of $\Omega(\rho)$ (which already 
satisfy the $\tau$-length condition because all their sides 
have $\tau$-length $\pi$ by definition)  and $N$ other 
components $\{U_j'\}$ (which might not satisfy the $\tau$-length 
condition; this is what we want to fix).
 The tree $T'$  is  uniformly analytic, and alternate 
vertices are analytic ($T'$ was obtained from an analytic 
tree $T$ by adding midpoints of edges).
 We then map each of the  $U_j'$  conformally to 
$\rhp$, take ${\cal Q}$  to be the image of the new analytic 
vertices and apply Lemma   \ref{single component} to this partition.
The resulting forest $T''$ is then mapped conformally back to 
$U_j'$ and attached to $T'$.  
Using Lemma \ref{transfer half-plane},  we see that the 
resulting tree has bounded geometry and satisfies a lower 
$\tau$-length condition. 
The rest of the proof of Theorem \ref{details} then 
exactly follows the proof of Theorem \ref{App Omega}.
Thus to prove Theorem \ref{details} 
it suffices to establish Lemma \ref{single component}.

We now start the proof of Lemma \ref{single component}.
Fix a partition ${\cal Q}$ of $\partial \rhp$.
Choose  a 
base interval $I_0$ in the partition. Without 
loss of generality we may assume $I_0 = [-i,i]$ 
and label the partition endpoints  $\{ z_j\} = \{ i x_j\} 
\subset \partial \rhp$ so that 
 $$ \dots  x_{-3} <  x_{-2} <  x_{-1} =-1 < x_1=1 <  x_2 <  x_3 \dots.$$
Note that the elements
of ${\cal Q}$ are labeled by $\integers$, but the endpoints 
are labeled by $\integers^* = \integers \setminus \{0\}$.
It will be convenient to define $k^* = k+1$ if $k >0$
and $k^*=k-1$ if $k<0$; for $k  \in \integers^*$, $k^*$
 is the integer adjacent to $k$, 
but farther from $0$. With this notation, we can write 
$I_k =(x_k, x_{k^*})$ without needing to have special cases 
for $k >0$ and $k <0$ (although we accept that an interval can 
be written as either $(a,b)$ or $(b,a)$).

  Define the 
central  region in $\rhp$  as the union of the  rectangle $[0, 1] 
\times [-1,1]$ and the sector  $\{x+iy \in \rhp: |y| < x\}$. 
This region is illustrated in the 
left part of Figure \ref{JoinTrap5B}. 
The boundary of the central region consists of the segment 
$I_0  \subset \partial \rhp$ and two  infinite paths  in $\rhp$ that 
we will call the upper and lower boundaries.

\begin{figure}[htb]
\centerline{
\includegraphics[height=2.8in]{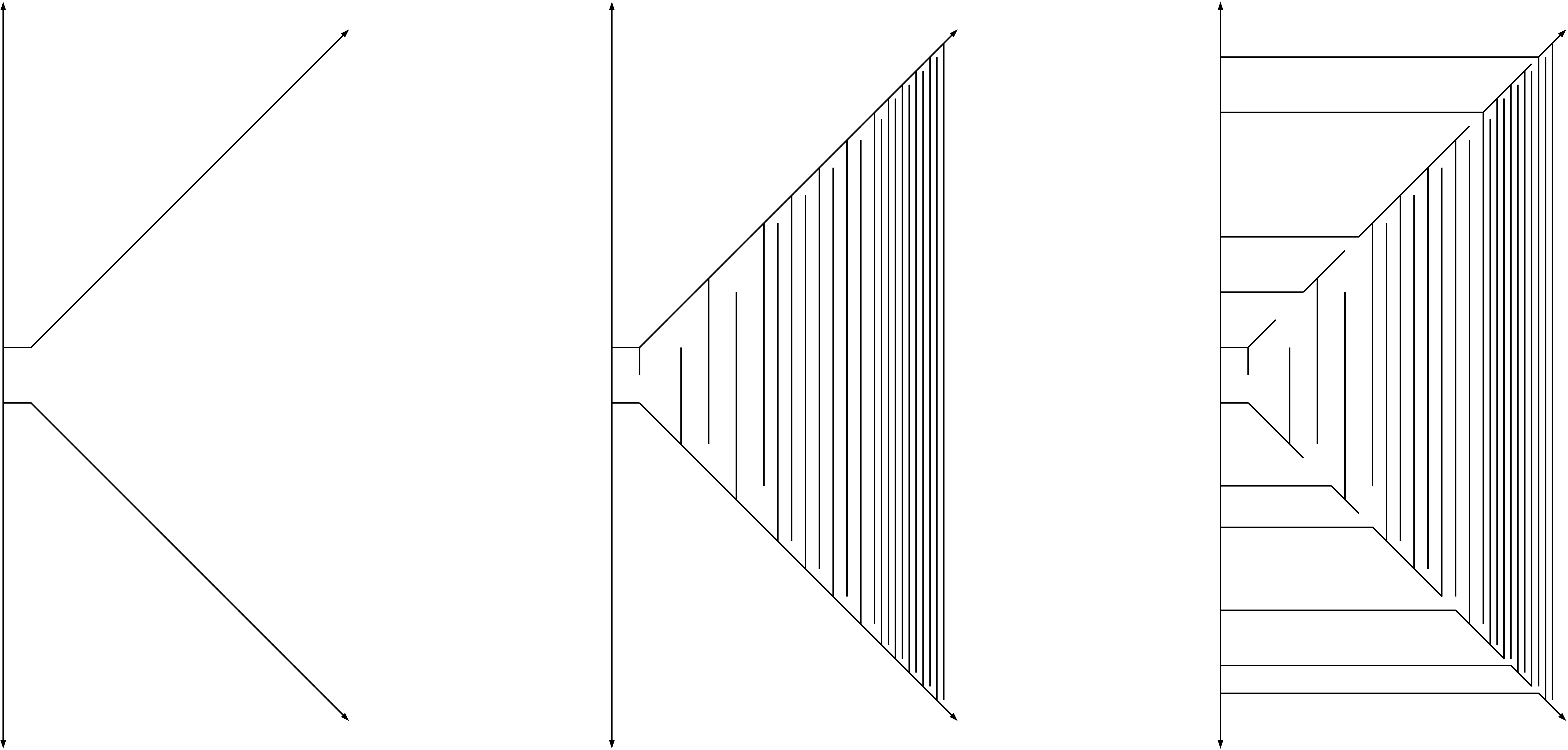}
}
\caption{ \label{JoinTrap5B}
This figure encapsulates the proof of Lemma \ref{single component}
by showing the relevant tree.
On the left is the central region, in the center is the 
central tube and on the right is the tree $T''$.
We can easily 
add vertices to make this a bounded geometry tree, and 
 we will use extremal length estimates to show $W''$ 
satisfies a positive  $\tau$-length lower bound.
}
\end{figure}

It would be very convenient for us if the partition 
${\cal Q}$ was symmetric with respect to the origin, 
i.e., $J_{-k} = - J_k$.  Since this need not be the 
case, we will build a new partition ${\cal Q}'$ that is symmetric, 
has bounded geometry  and is ``finer'' than ${\cal Q}$ in 
a certain sense.  More precisely:

\begin{lemma} \label{symmetric partition}
Given a bounded geometry partition ${\cal Q}$ of 
$\partial \rhp$  normalized 
as above,  and a real number $1 < M < \infty$, 
there is a bounded geometry partition ${\cal Q}'
= \{ I_j'\}_{\integers}$ of $\partial \rhp$
so that 
\begin{enumerate}
\item  the interval $I_0'=I_0$ is an element of ${\cal Q}'$, 
\item  the partition is symmetric, that is, $I_{-j}' = - I_j'$,
\item the length of $I_j'$ is a non-increasing function of $|j|$,
\item the length of every $I_j$ is an integer power of $2$,
\item \label{cond 5} if any interval $I' \in {\cal Q}' \setminus \{ I_0\}$ 
        intersects 
       an interval $I \in {\cal Q}$, then $|I'|< |I|/M$,
\item the bounded geometry constant of ${\cal Q}'$  is 
      bounded above depending only on $M$ and the bounded 
      geometry constant  $C$ of ${\cal Q}$. 
\end{enumerate}
\end{lemma}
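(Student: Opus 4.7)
My plan is to build $\mathcal{Q}'$ by first forming a symmetric, non-increasing envelope of the interval-length function of $\mathcal{Q}$ and then inductively partitioning each ray of $\partial\rhp$ into dyadic intervals whose length is controlled by this envelope divided by $2M$. Let $f\colon\reals\to(0,\infty)$ be the step function with $f(y)=|I|$ when $iy$ lies in the interior of $I\in\mathcal{Q}$; in particular $f(0)=|I_0|=2$. Bounded geometry of $\mathcal{Q}$ (with constant $C$) together with local finiteness gives a uniform lower bound $f\geq\delta>0$, and $f(y)/f(y')\leq C^{n}$ whenever $iy,iy'$ lie in $\mathcal{Q}$-intervals separated by $n$ partition steps. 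Define
\[ F(y)=\inf\{\,f(t):|t|\leq|y|\,\}, \]
which is symmetric in $y$, non-increasing in $|y|$, satisfies $F\leq f$ pointwise, and has $F\geq\delta>0$.

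Set $y_0=1$, $\ell_0=|I_0|=2$, and for each $k\geq1$ inductively define
\[ \ell_k=\max\bigl\{\,2^n:n\in\integers,\ 2^n\leq\ell_{k-1}\text{ and }2^n\leq F(y_{k-1}+2^n)/(2M)\,\bigr\},\quad y_k=y_{k-1}+\ell_k, \]
which is well-defined because $F$ is positive and non-increasing. Take $I_k'=[iy_{k-1},iy_k]$ for $k\geq1$, $I_{-k}'=-I_k'$, and $I_0'=I_0$. Since $F$ is bounded below, $\ell_k$ eventually stabilizes at a positive constant, so $y_k\to\infty$ and $\mathcal{Q}'$ is a locally finite partition of $\partial\rhp$. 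Conditions (1)--(4) are immediate from the construction; the symmetry and monotonicity of $F$ transfer to the sequence $\{\ell_k\}$. For (5), if $y\in I_k'$ with $k\neq0$, then $|y|\leq y_{|k|}$ and the non-increasing property of $F$ give $F(y)\geq F(y_{|k|})\geq 2M\ell_{|k|}=2M|I_k'|$ (the last inequality from the definition of $\ell_{|k|}$); hence the $\mathcal{Q}$-interval $I$ containing $iy$ satisfies $|I|=f(y)\geq F(y)\geq 2M|I_k'|>M|I_k'|$.

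The only delicate point is (6), bounding $\ell_{k-1}/\ell_k$ uniformly (the reverse ratio being $\leq 1$ by construction). If $\ell_k<\ell_{k-1}$ then the value $2\ell_k$ must have violated the $F$-constraint, so $F(y_{k-1}+2\ell_k)<4M\ell_k$, while $F(y_{k-1})\geq 2M\ell_{k-1}$. This drop is witnessed by some $y^*$ with $|y^*|\in(y_{k-1},y_{k-1}+2\ell_k]$ and $f(y^*)<4M\ell_k$, whereas the $\mathcal{Q}$-interval containing $\pm iy_{k-1}$ has length $L_0\geq 2M\ell_{k-1}$. Letting $n$ be the number of $\mathcal{Q}$-steps in the chain linking these two intervals, bounded geometry of $\mathcal{Q}$ gives $\ell_{k-1}/(2\ell_k)\leq C^{n}$, while the sum of the full intermediate intervals is at most $2\ell_k$ and, in the worst case of monotonically decreasing chain lengths, at least $L_0\sum_{i=1}^{n-1}C^{-i}=L_0(1-C^{-(n-1)})/(C-1)$. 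A short case-split on whether this sum has essentially reached its limit $L_0/(C-1)$ then yields the uniform bound $\ell_{k-1}/\ell_k\leq 4C$; a separate direct computation at the origin, using $|I_{\pm1}|\geq 2/C$, bounds $|I_0|/|I_{\pm1}'|=O(CM)$. Combining, $\mathcal{Q}'$ has bounded geometry constant $O(CM)$. The main obstacle is this last estimate, which requires carefully balancing the worst-case drop of the envelope $F$ across one step against the length constraints forced by the bounded geometry of $\mathcal{Q}$; every other claim is a direct consequence of the definitions.
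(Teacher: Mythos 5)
Your overall strategy is genuinely different from the paper's. The paper covers each $\mathcal{Q}$-interval $I_j$ in turn by dyadic intervals of length comparable to $|I_j|/M$, adjusting a running scale $\ell_j$ to keep lengths monotone, and then symmetrizes by reflecting and taking pointwise minima. You instead build a single symmetric non-increasing envelope $F$ of the length function and run a greedy dyadic construction directly against $F$. The envelope makes symmetry and monotonicity automatic, which is an attractive simplification. However, there are two concrete problems with the write-up.

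The first is a false claim: bounded geometry and local finiteness do \emph{not} imply a uniform lower bound $f \geq \delta > 0$, and consequently $F$ need not be bounded below. A bounded geometry partition of $\partial\rhp$ can contain ``valleys'' whose depth tends to zero (for instance, lengths of the form $\dots,1,2,4,8,4,2,1,\tfrac12,\tfrac14,\tfrac12,1,2,4,8,16,8,\dots,\tfrac18,\dots$ with constant $C=2$); the partition still has infinite total length because it recovers after each valley, but $\inf_j |I_j| = 0$ and hence $F(y)\to 0$. So your stated justification that $\ell_k$ ``eventually stabilizes at a positive constant'' and hence $y_k\to\infty$ does not hold. The conclusion that $\mathcal{Q}'$ is a genuine partition of the whole line is still true, but it needs a different argument (e.g., if $y_k\to y_\infty<\infty$ then for large $k$ the $F$-constraint must bind, giving $F(y_{k-1}+2\ell_k)<4M\ell_k\to 0$ while $y_{k-1}+2\ell_k\to y_\infty$ and $F$ is bounded below on any bounded set by local finiteness, a contradiction). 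The paper's interval-by-interval covering sidesteps this entirely, because it covers $\cup_j I_j$ exhaustively by construction.

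The second issue concerns (6). Your chain estimate works cleanly when $M$ is large relative to $C$: in that regime the step $2\ell_k$ is too short for the chain to traverse more than one $\mathcal{Q}$-interval, giving $\ell_{k-1}/\ell_k \leq 2C$ directly. But the lemma is stated for every $M>1$, and when $2M \leq C-1$ your inequality $2M(1-C^{-(n-1)})/(C-1)\leq 1$ places no bound on the chain length $n$ and therefore no bound on $\ell_{k-1}/\ell_k \leq 2C^n$. You acknowledge the point is delicate and in the application ($M=8C$) the gap is harmless, but as written the bound is not established for all $M>1$. The chain argument also needs to be stated so that the two intervals being linked (the one carrying $L_0$ and the one containing $y^*$) lie on the same ray of $\partial\rhp$, since there is no short $\mathcal{Q}$-chain across the origin; this is implicit in what you wrote but should be made explicit.
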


\begin{proof}
Cover  $I_1$ by  a collection ${\cal D}_1$ of
closed  dyadic intervals
that all hit $I_1$ and that all have 
lengths strictly  less than $|I_1|/4M$ and greater or 
equal to $|I_1|/8M$ (since there is exactly one power of 
two  in this range, all the chosen intervals have the same 
length, call it $\ell_1$).  In general, suppose we have already covered 
$I_1 \cup \dots \cup I_{j-1}$ by a  collection ${\cal D}_{j-1}$
of closed dyadic intervals  such that
\begin{enumerate} 
\item the interiors are disjoint, 
\item every $J\in{\cal D}_{j-1}$ hits some $I_k$, $1 \leq k < j$, 
\item the lengths are non-increasing, 
 \item if $J \in {\cal D}_{j-1}$ hits 
$I_k$, $ 1 \leq k  < j$ then $|J|< |I_k|/M$.
\item adjacent dyadic intervals have comparable lengths.
\end{enumerate}
Let $\ell_{j-1}$ be the length of the last (rightmost) 
dyadic interval  in ${\cal D}_{j-1}$.
Cover $I_{j}$ by a collection ${\cal C}_j$ of 
 dyadic intervals all with  the same
length $\ell_j$,  where $\ell_j$ is the
integer  power of $2$ satisfying 
$|I_j|/8M \leq \ell_j <  |I_j|/4M$. 

First suppose  $\ell_j \leq \ell_{j-1}$.  Remove the 
last interval in ${\cal D}_{j-1}$ and replace it by 
its   dyadic subintervals 
of length $\ell_j$ that don't hit $I_j$. Also add the 
dyadic intervals in ${\cal C}_j$ to ${\cal D}_{j-1}$
 to get the collection ${\cal D_j}$. 
Clearly  (1)-(4) all hold. 
Moreover, 
$$ \ell_{j-1} \geq \ell_j \geq |I_j|/8M 
\geq |I_{j-1}|/8MC \geq \ell_{j-1}/2C,$$
where $C$ is the bounded geometry constant of ${\cal Q}$. 
Thus (5) also holds. 

Next, suppose $\ell_j > \ell_{j-1}$. Then subdivide each 
dyadic interval in the cover ${\cal C}_j$
 of $I_j$ into dyadic subintervals 
of length $\ell_{j-1}$ and redefine $\ell_j  = \ell_{j-1}$.
Add these intervals to ${\cal D}_{j-1}$ to give ${\cal D}_j$ 
(except possibly the first interval, if it is already in 
the collection). Then (1)-(5) are all obvious.

Next, do  the analogous  construction for $j <0$ and reflect the resulting 
dyadic cover of $(-\infty ,-1]$ across zero to get a 
dyadic covering of  $[1, \infty)$. By taking the shortest 
interval covering each point we get a dyadic covering of 
$[1, \infty)$   which satisfies all the desired conditions.
\end{proof}

Fix $M \geq  8C$ (recall $C$ is the bounded geometry
 constant of ${\cal Q}$)
and  apply Lemma \ref{symmetric partition} to get a symmetric 
partition ${\cal Q}' = \{ I_j\}_{j \in \integers^*} $. 
We will use the partition ${\cal Q}'$ to fill the central region
with a meandering tube. Let $\{a_j\} \subset [1, \infty)$
 be the  positive endpoints of ${\cal Q}'$ ($I_j=(i a_j,  i a_{j+1})$).
Let $\eta_j = a_{j+1}-a_j = |I_j|$.
Now add the  vertical segments 
$$V_j = \left\{ x+iy: x =a_j,  \quad -a_j+ {\eta_j}\frac {1+(-1)^j}2 
\leq y \leq a_j - {\eta_j}\frac{ 1+(-1)^{j+1}}{2}   \right\}$$ 
inside the central region; more geometrically, we are
adding segments on the  
vertical  lines $\{ x = a_n\}$ that lie inside the 
central region so that one endpoint lies on the boundary 
of the central region and the other is distance $\eta_j$ below 
or above the boundary. Alternate segments  alternately  touch 
the ``top'' and ``bottom'' sides  of the central region.
This defines a simply connected subregion of the central 
region that we call the central tube.
The boundary of the tube consists of $I_0'$ and two connected 
components that we call the upper and lower components.
 The central tube is 
illustrated in the center of  Figure \ref{JoinTrap5B}.

We make  the boundary of the central tube
 into a tree by adding vertices on the 
vertical segments at their endpoints and at points 
spaced  $\eta_j $ apart on $V_j$. 
On the upper and lower boundaries of the central region 
we add a vertex at all the points $\{ a_j \pm i a_j\}_{j \geq 1}$.
It is easy to see that this makes the boundary of the 
central tube into a bounded geometry tree (since $\eta_j \simeq 
\eta_{j+1}$).

As before, let  $\{ z_j\} =\{i x_j\}$ be the endpoints of the 
partition ${\cal Q}=\{I_j\}$. 
For  each $k  \in \integers^*$, 
choose a   $y_k = a_{n(k)}$  so that 
$|x_k-y_k|<  |I_k|/M$. This is possible 
by  condition (\ref{cond 5}) in 
Lemma \ref{symmetric partition}.
Define the segment 
$H_k= [i x_k,  w_k ]$ where $w_{k} = |y_k|  +iy_k$.
This segment  
connects $i x_k$  to a  vertex  on the boundary of 
the central tube and is close to horizontal (the 
absolute value of its slope is $\leq 1/M$;
by abusing notation, we will refer to these segments as ``horizontal''.
Doing this for every $k$ divides the complement of the 
central region into quadrilaterals that look like 
trapezoids, and which we will call trapezoids by an 
another abuse of notation. 

We can also make the choice above
 so $n(k)$ is even if $k <0$ 
and is odd if $k>0$. This means that the right-hand 
vertex  $w_k$ of the segment $H_k$  is a degree $2$ 
vertex of the central tube, and hence forms a degree 
$3$ vertex when the segment $H_k$ is added. This is 
important because we want the final tree to have maximum 
degree $3$ (otherwise we would end up with the upper 
bound 16 instead of 12 in part (2) of Theorem 
\ref{App Omega}). 

\begin{figure}[htb]
\centerline{
\includegraphics[height=2.5in]{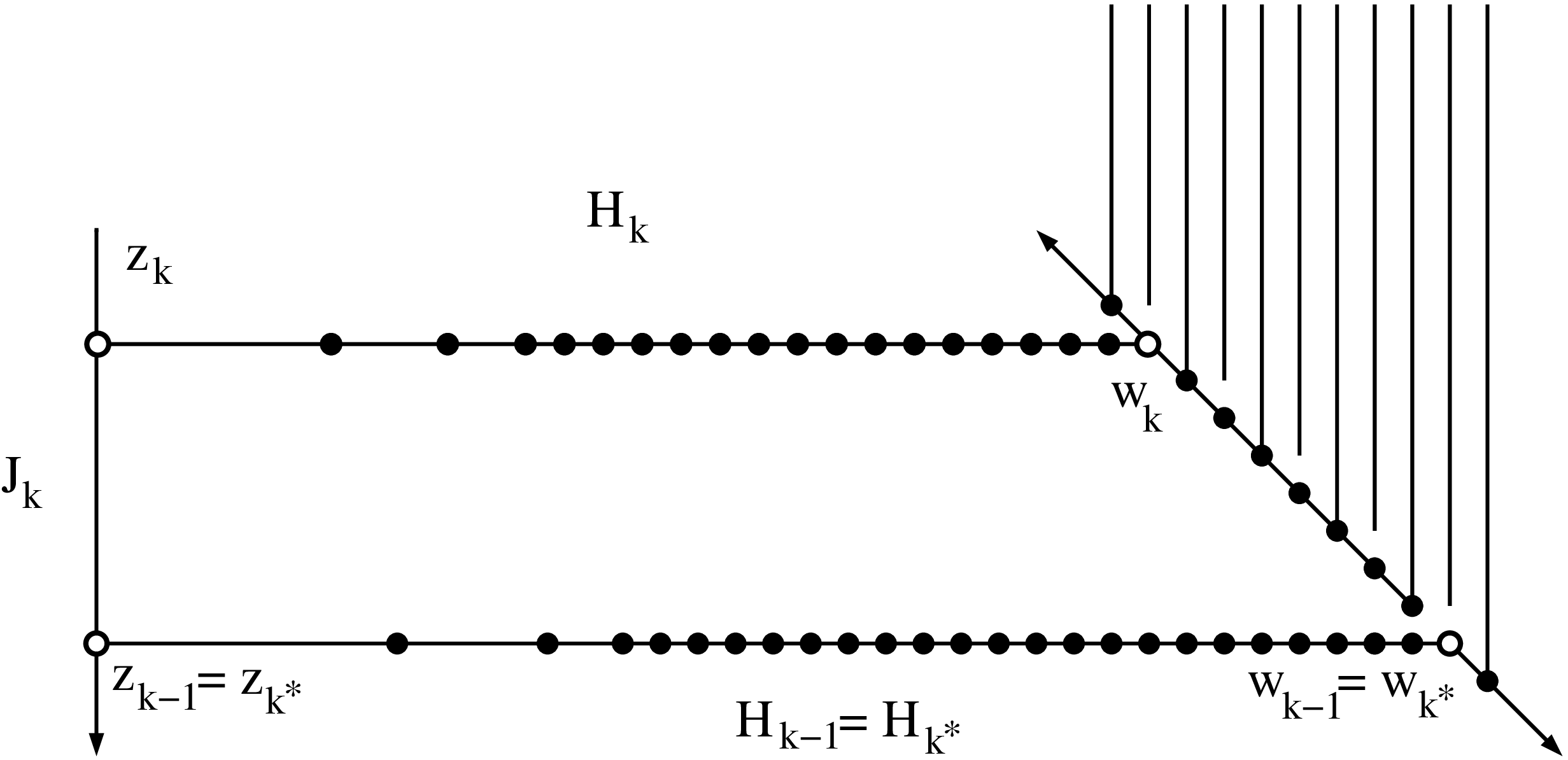}
}
\caption{ \label{JoinTrap1}
We build  approximate trapezoids by joining 
partition points on $\partial \rhp$ by 
almost horizontal lines to partition vertices 
on the boundary of the central region.
A small segment is then removed from the 
boundary of the central region so that 
the interior of the trapezoid is joined 
to the central tube.
 In this picture, $k<0$, so $k^* = k-1$.
}
\end{figure}

For $k \in \integers^*$, the $k$th trapezoid has 
 left side $ I_k \subset \partial \rhp$, two  
``horizontal'' sides $H_k$ and $H_{k^*}$,
 and a right-hand side
of slope $\pm 1$ along the boundary of the central region.
See Figure \ref{JoinTrap1}.
Vertices are added to the ``horizontal'' sides $H_k$
that break  $H_k$ into segments that 
start at the left with length comparable to the 
length of $I_k$ and end on the right with lengths
comparable to (but larger than)
 $ \eta_n =a_{n+1} - a_{n}$ where $n=n(k)$. Thus the 
sub-segment of $H_k$ that meets the boundary of the central 
tube has length comparable to the edges of the 
central tube at the meeting point.  Hence we have 
a bounded geometry forest in $\rhp$.
However, this forest  cuts the plane into 
infinitely many components (the trapezoids and the central tube).
We want to form a single component by removing 
some segments.

For each $k \in \integers^*$, choose $n = n(k)$ so 
$w_{k^*} = a_n \pm i a_n$ then remove the 
open segment $(a_{n-1} \pm i a_{n-1}, w_{k^*})$ from
the boundary of the central region; this is 
the segment on the boundary of the central 
tube   that is also on the boundary of the $k$th 
trapezoid and that  has $w_{k^*}$ as one endpoint.
See Figure \ref{JoinTrap1}.
Removing this segment connects the $k$th trapezoid
to the central tube. When we have removed all 
such segments, we have a bounded geometry forest
$T''$ with a single complementary component $W''$  in $\rhp$.
See the right side of Figure \ref{JoinTrap5B}.

All that remains is to prove that $W''$ satisfies a 
positive $\tau$-length lower bound.
First consider  sides of $W''$ that are 
also sides of the central tube. If such a side
lies on the upper boundary, it can clearly be
separated from the lower boundary by a path family 
with uniformly bounded modulus. Thus 
 Lemma \ref{tube bound}  implies that the 
$\tau$-lengths of  such sides grow 
exponentially and hence are bounded away  from zero.
A similar argument applies to sides on the lower 
boundary of the central tube.

Next we have to consider sides of $W''$ that are sides
of the $k$th trapezoid. We want to use Lemma
\ref{add rooms} with $I=I_0$, $J$ a 
side of $k$th trapezoid and $K$ a side of the 
central tube, chosen as shown in Figure \ref{JoinTrap2}.
To prove $M(J,K) \leq M(I,K)$ we will first give
an upper bound for $M(J,K)$ and  then give 
a lower bound for $M(I,K)$  that is larger than this bound.

Replacing $J$ by a sub-interval only increases 
$M(J,K)$, and every side of the $k$th trapezoid has 
length at least $\eta_{n^*}$ where  $n^*=n(k^*)$ is defined
by the relation $y_{k^*} = a_{n^*}$. 
So we assume that $J$ is any interval of length 
$\eta_{n^*}$  on the side of the $k$th trapezoid. 
Any such  $J$  is one side of a generalized 
quadrilateral  $Q \subset W''$ whose opposite side is $K$ and 
so that the two remaining sides are 
at least distance $\eta_{n^*}$ apart. See 
Figure  \ref{JoinTrap2}. 

\begin{figure}[htb]
\centerline{
\includegraphics[height=2.0in]{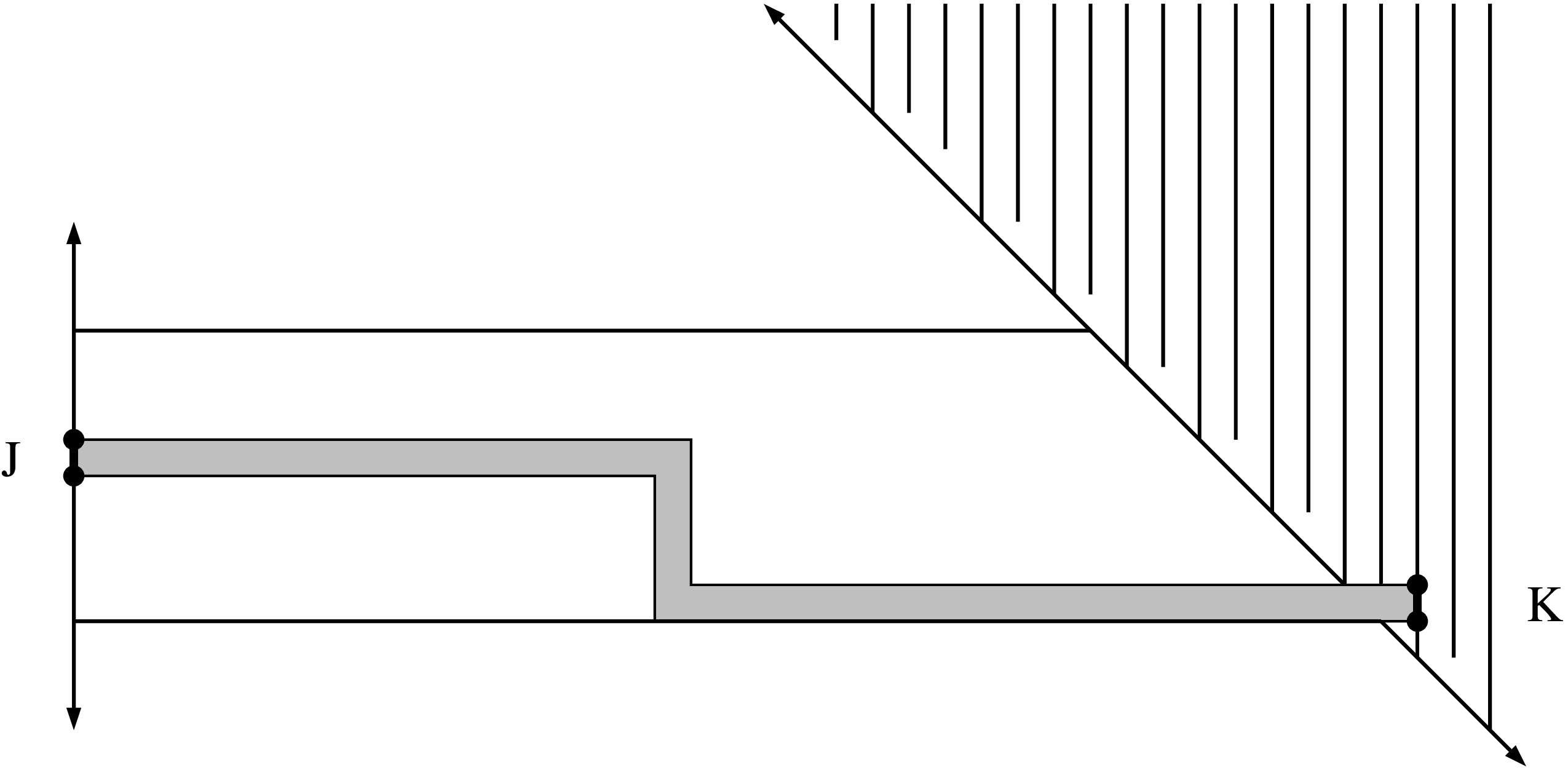}
}
\vskip.3in
\centerline{
\includegraphics[height=2.15in]{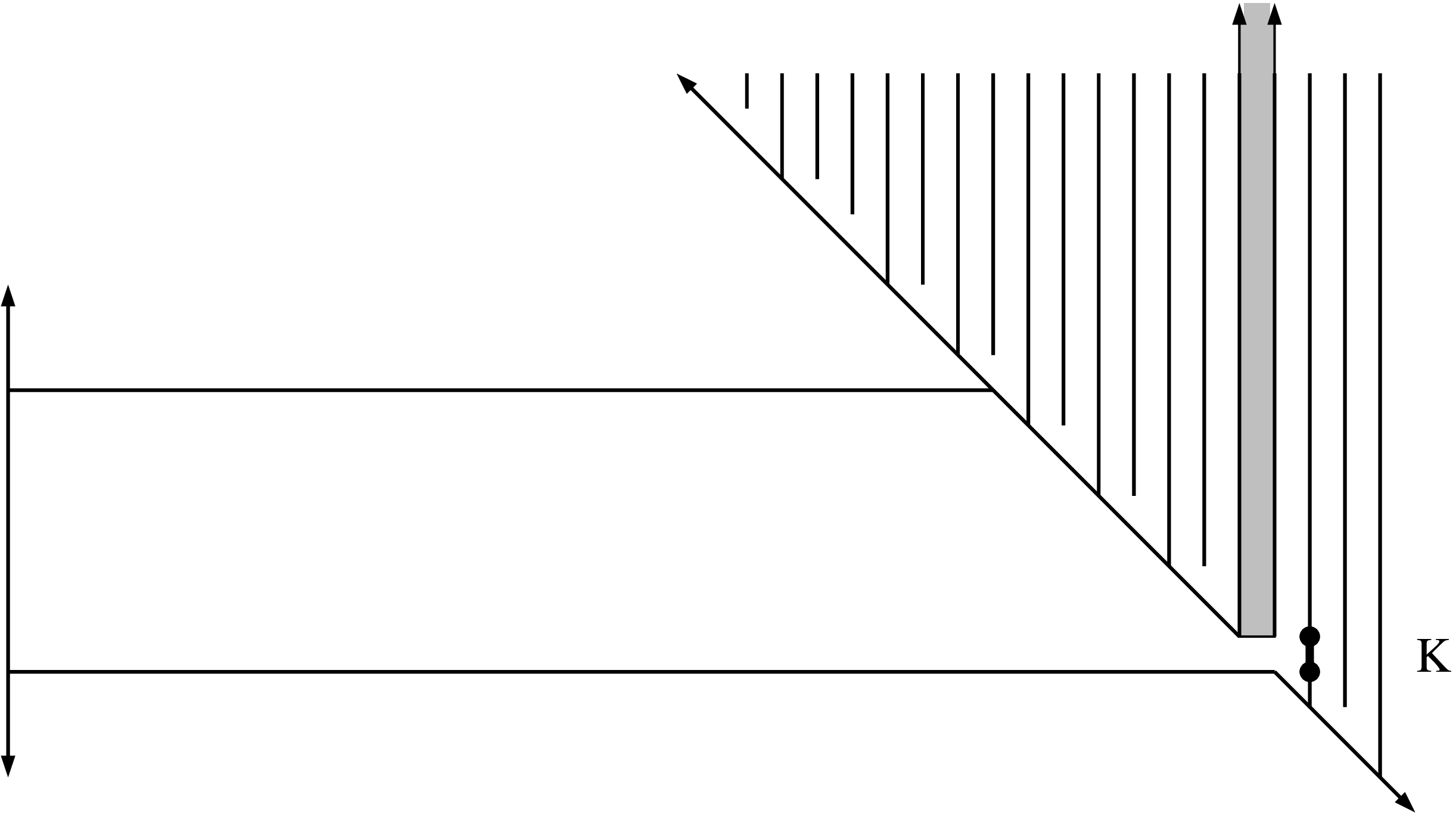}
}
\caption{ \label{JoinTrap2}
The modulus of the  shaded tube in the top picture 
gives an upper bound for $M(J,K)$.
The interval $J$ is on the side of the trapezoid, 
and $K$ is on the boundary of the central tube.
The modulus of the shaded rectangle in the bottom picture 
gives a lower bound for $M(I_0, K)$.
}
\end{figure}

Moreover,  we can choose 
$Q$ so its area is at  most $(|I_k|+y_{k^*}) \eta_{n^*}$.
Therefore the modulus of the path family separating 
$J$ from $K$ in the quadrilateral is at most 
$(|I_k|+y_{k^*})/ \eta_{n^*}$ (just take the constant 
metric $\rho = 1/\eta_{n^*}$ in the definition of 
modulus; any admissible metric gives an upper bound 
for the modulus.)
Any path separating $J$ and $K$ in $W''$ contains 
a sub-path that separates them in $Q$, so by the 
extension principle 
$$M(J,K) \leq  (|I_k|+y_{k^*})/ \eta_{n^*}.$$

Now we give a lower bound for $M(I,K)$.
Note that there is  
$\eta_{n^*} \times (2y_{k^*}-2\eta_{n^*})$ rectangle that 
separates $K$ from $I_0$; it is contained in 
the vertical section of the central tube just 
above the opening to the $k$th trapezoid and 
the lower portion of the rectangle
is shown as a shaded region in the bottom picture 
of Figure \ref{JoinTrap2} (the rectangle extends upwards
almost to the upper boundary of the central region).
 The horizontal segments 
that cross this rectangle  separate $K$ from $I_0$, so
by the extension principle again, we see that the 
modulus of these horizontal segments is a lower
bound for $M(I,K)$, hence (since the modulus of a 
rectangle is the ratio of its sides), 
 \begin{eqnarray} \label{MIK est}
 M(I,K) \geq  \frac{2y_{k^*}}{\eta_{n^*} } -2.
\end{eqnarray}

Now we have to compare our lower bound for $M(I,K)$ 
to our upper bound for $M(J,K)$. 
 By the definition of the bounded geometry constant
$C$ for ${\cal Q}$, if $|k|=1$ then 
$I_k \in {\cal Q}$ is distance 
$1$ from the origin and $|I_k| \leq C|I_0| \leq 2C$, so 
 $1 \geq |I_k|/2C$. If $|k|>1$, 
then $I_k \in {\cal Q}$ is  separated from the origin by another 
partition interval of length at least $ |I_k|/C$.
So in either case  
$$y_{k^*} \geq x_{k^*}  \geq |I_k| +  |I_k|/2C,$$
so that  
 $|I_k|  \leq (1-\frac 1{4C}) y_{k^*}$ (here we use 
that $(1+\epsilon)^{-1} < 1-\epsilon/2$ if $0 < \epsilon <1$).
Hence
\begin{eqnarray*}
M(J,K)
&\leq&  \frac {|I_k|+y_{k^*}}{\eta_{n^*}} 
\leq   (2-\frac 1{4C}) \frac {y_{k^*}}{\eta_{n^*}} \\
&\leq&  \frac   {2y_{k^*}}{\eta_{n^*}} - \frac{y_{k^*}}{4C\eta_{n^*}}   
\leq  \frac   {2y_{k^*}}{\eta_{n^*}} - \frac{|I_k|}{4C\eta_{n^*}}   \\
&\leq& \frac   {2y_{k^*}}{\eta_{n^*}} - \frac{M}{4C}  .
\end{eqnarray*}
Since we assumed $M  \geq   8C$, and using (\ref{MIK est}), 
we have 
$$ M(J,K) \leq  \frac   {2y_{k^*}}{\eta_{n^*}} - 2  \leq    M(I, K).$$
Thus by Lemma \ref{add rooms}, the $\tau$-length of $J$ 
is greater than that of $ I=I_0$ and hence is bounded
uniformly from below.
As before, it is easy to check that the constructed tree 
has maximum vertex degree $3$.
 This completes the proof of 
Lemma \ref{single component} and hence the proof of 
Theorem \ref{details} (the number of tracts in the approximation 
is at most double the number of tracts in the model).

\section{Geometric restrictions on Speiser models }  \label{near constant sec}

So far, this paper has dealt with methods for building Speiser
class functions. The remainder of the paper is devoted 
to placing limits on what can be accomplished in this 
direction. 
 In this section, we show that 
the choice of $\tau$ on different tracts of a Speiser
class function must satisfy certain constraints; no 
such restriction need hold for the Eremenko-Lyubich class
by the results of \cite{Bishop-EL-models}.
This is a clear difference between the two classes.

Suppose $f \in \classS$
and $S(f) \subset \disk$.  Since $S(f)$ is finite, 
there is an  $\epsilon>0$ is so  that 
$$ \dist(S(f), \partial \disk) > 4 \epsilon$$
 and
$$ \min \{ |a-b|: a,b\in S(f), a \ne b \} > 4 \epsilon.$$
For $a \in S(f)$ 
 let $D_a= D(a, \epsilon)$. and let $2D_a=D(a, 2 \epsilon)$.
The open  disks  $D_a$ are evidently pairwise disjoint and all 
lie inside $\disk$.

For each $a \in S(f)$, the set $W(a,\epsilon)=f^{-1}(D_a)$ 
only  has simply connected components,
and on each such component $U$  the map $f: U \to D_j $ acts either as
\begin{enumerate}
\item    a $1$-to-$1$ map onto $D_j$,  
\item  a  finite-to-$1$  branched cover   of $D_j $ with
       a single  critical value  at   $a$ or
\item an $\infty$-to-$1$ cover of $D_a \setminus \{a\}$.
\end{enumerate}
In the first two cases $U$ is bounded, and in the 
third  case it is unbounded and contains a
 path to $\infty$ along which $f$ 
has  asymptotic value $a$. 
Moreover, the pre-images $f^{-1}(a+\epsilon)$
  partition  $\partial W(a,\epsilon)$ into arcs.
Let $X = \overline{\disk} \setminus \bigcup_{a \in S(f)} D(a, \epsilon)$.

Recall that given $r >0$ and an  arc $I$, we define a neighborhood 
of $I$ by 
$$ I(r) =  \{ z: \dist(z, I) <  r \cdot\diam(I) \} .$$

\begin{thm} \label{near constant}
There is an $r < \infty$, depending only on $\epsilon$, 
so that for each partition arc $I$  of
 $\partial W(a,\epsilon)$   there is
a  edge $J$ of $\partial \Omega$  with
$I \subset J(r)$ and $J \subset I(r)$.  Moreover, 
$ \diam(I)  \simeq  \diam(J) \simeq \dist(I,J)  $
and the lengths of $I$ and $J$ are comparable 
to their diameters. 
\end{thm}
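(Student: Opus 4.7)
The plan is to exhibit $J$ as the lift of a short path in the range connecting $\partial D_a$ to $\partial\disk$, exploiting the fact that $f$ restricts to a covering map $f^{-1}(\complex\setminus S(f))\to\complex\setminus S(f)$, and then to deduce $\diam(I)\simeq\diam(J)\simeq\dist(I,J)$ from Koebe's distortion theorem applied to the resulting univalent restriction of $f$.

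More concretely, I would fix $z_0\in I$ and let $p_0=f(z_0)\in\partial D_a$. The next step is to build a simply connected region $N$ in $\complex\setminus S(f)$ consisting of a \emph{slit annulus} obtained from $A_a=\{w:\epsilon/2<|w-a|<3\epsilon/2\}$ by removing a radial segment, joined through its outer boundary to a corridor of width $\simeq\epsilon$ that stays at distance $\gtrsim\epsilon$ from every other $D_b$ and terminates in a small disk crossing $\partial\disk$ transversally at a chosen point $q\in\partial\disk\setminus\{\pm 1\}$. Such an $N$ exists with geometric parameters depending only on $\epsilon$, because the singular values are $4\epsilon$-separated and so $|S(f)|=O(\epsilon^{-2})$. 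The covering property then yields a univalent lift $\tilde N$ of $N$ starting from $z_0$, with $f\colon\tilde N\to N$ a conformal bijection; the lifted slit annulus contains $I$ with at most one point removed, and the lifted corridor terminates at a point $w_0\in\partial\Omega$ with $f(w_0)=q\neq\pm 1$, hence in the interior of a unique edge $J$ of $\partial\Omega$. Koebe's theorem applied to $f|_{\tilde N}$ gives $|f'(z)|\simeq|f'(z_0)|$ on any fixed compact sub-region of $\tilde N$, with constants depending only on the shape of $N$, hence only on $\epsilon$. Since $f$ maps $I$ bijectively onto $\partial D_a$ (length $2\pi\epsilon$) and $J$ bijectively onto a sub-arc of $\partial\disk$ of length $\pi$, this immediately yields $\diam(I)\simeq\mathrm{length}(I)\simeq\epsilon/|f'(z_0)|$ and $\diam(J)\simeq\mathrm{length}(J)\simeq 1/|f'(z_0)|$, which are comparable with constants depending on $\epsilon$. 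The upper bound $\dist(I,J)\lesssim\diam(I)$ is the length of the lifted corridor (range length $O(1)$ divided by $|f'(z_0)|$); the matching lower bound follows from the converse Koebe statement, since two points of $I$ and $J$ closer than $c/|f'(z_0)|$ would lie in a common univalent disk sent by $f$ to points at distance less than $3\epsilon$, contradicting $\dist(\partial D_a,\partial\disk)\geq 3\epsilon$. The inclusions $I\subset J(r)$, $J\subset I(r)$, and the length-versus-diameter comparisons, follow immediately.

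The main obstacle is topological: since $f(I)=\partial D_a$ wraps once around a circle, $I$ cannot be lifted inside a strictly univalent chart covering a full neighborhood of $\partial D_a$, and in the higher-degree or asymptotic-value cases (types 2 and 3) the pre-image of $A_a$ adjacent to $\partial U$ is a multiply-sheeted annulus so one must first select the correct sheet containing $I$. Removing the slit also leaves out one endpoint of $I$. I would handle both by working with two overlapping slit annuli (slitted along different radii) plus two corridors out to $\partial\disk$, and patching the Koebe estimates; the key check is that the two lifted corridor endpoints land on the \emph{same} edge $J$, which follows because they are joined inside the lifted annulus by a short sub-arc of $f^{-1}(\partial\disk)$ that can be arranged to miss every vertex $f^{-1}(\pm 1)$ (by choosing $q$ well away from $\pm 1$). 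Once this topological bookkeeping is in place, the rest of the argument is routine propagation of distortion constants, all bounded in terms of $\epsilon$ alone.
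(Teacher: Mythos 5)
Your strategy is the same as the paper's at heart: connect $\partial D_a$ to $\partial\disk$ by a range path avoiding $S(f)$, lift it univalently, and propagate Koebe distortion estimates along the lift to compare $|f'|$ on $I$ and $J$. The paper does this with a chain of $O(\epsilon^{-1})$ disks of radius $\simeq\epsilon$ whose doubles miss $S(f)$ — one subchain around $\partial D_a$, one along the connecting curve $\gamma$, and, crucially, one covering all of $\circle$ — and propagates the derivative comparison across consecutive overlapping disks. Your version packages the first two subchains into a single simply connected region $N$ (slit annulus plus corridor), which is a perfectly good alternative.

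However, as written your argument has a gap at the step that bounds $\diam(J)$. You let $N$ terminate in a \emph{small disk} crossing $\partial\disk$ at a point $q$, so the lifted region $\tilde N$ only contains a small neighborhood of the single point $w_0\in J$ — not the whole edge $J$, whose image $f(J)$ is a full arc of $\circle$ of length $\pi$. The Koebe estimate ``$|f'(z)|\simeq |f'(z_0)|$ on compact sub-regions of $\tilde N$'' therefore does not apply on most of $J$, and the conclusion $\diam(J)\simeq \mathrm{length}(J)\simeq 1/|f'(z_0)|$ does not follow from what you have. (Your control of $\diam(I)$ is fine, because the slit annulus around $\partial D_a$ does contain essentially all of $I$.) The fix is exactly what the paper does: enlarge $N$ to also contain a slitted annular neighborhood of $\circle$ of width $\simeq\epsilon$ (possible since $\dist(S(f),\circle)>4\epsilon$), or equivalently cover $\circle$ by a bounded number of $\epsilon$-disks with doubles missing $S(f)$; then $\tilde N$ contains all of $J$ except one point and the Koebe comparison extends over $J$, giving $\mathrm{length}(J)\simeq 1/|f'(z_0)|$ and completing the argument. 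Your remaining points — the two-slit trick to handle the removed endpoint and the higher-degree/asymptotic cases, the upper bound $\dist(I,J)\lesssim \diam(I)$ from the corridor length, and the lower bound via $\dist(\partial D_a,\circle)\geq 3\epsilon$ and Koebe — are all sound.
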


\begin{proof}
Fix $a \in S(f)$. Then $\partial D_a$ can be covered by a 
uniformly bounded number of disks whose doubles don't 
hit $S(f)$ and so $f^{-1}$ is conformal from each such 
disk to any of its pre-images under $f$. If $I$ 
is a partition arc of $\partial W(a,\epsilon)$, this 
fact and Koebe's distortion theorem  
imply that  $I$ has bounded geometry and its diameter is 
comparable to its length with constants that are 
uniform (the constants  only depend on the number of 
disks covering $\partial D_a$, which is uniformly bounded).

Similarly, $ \circle = \partial \disk$  is covered by 
$O(\epsilon^{-1})$ disks of radius $\epsilon$ whose doubles
don't intersect $S(f)$, so the same argument shows 
that a partition arc $J$ of $\partial \Omega$ has 
bounded geometry, but now with constants that depend on 
$\epsilon$. 

Finally, $\partial D_a$ and $\circle$ can be joined by a 
curve $\gamma$ that is never closer than $\epsilon$ to 
any point  of $S(f)$ (use a straight line segment and 
replace its intersection with any $D_a$ by the shorter 
arc of $\partial D_a$ connecting the same two points; see 
Figure \ref{AvoidingPath}).
This arc can also be covered
by $O(\epsilon^{-1})$ distinct
 $\epsilon$-disks whose doubles miss $S(f)$.  

\begin{figure}[htb]
\centerline{
\includegraphics[height=2.6in]{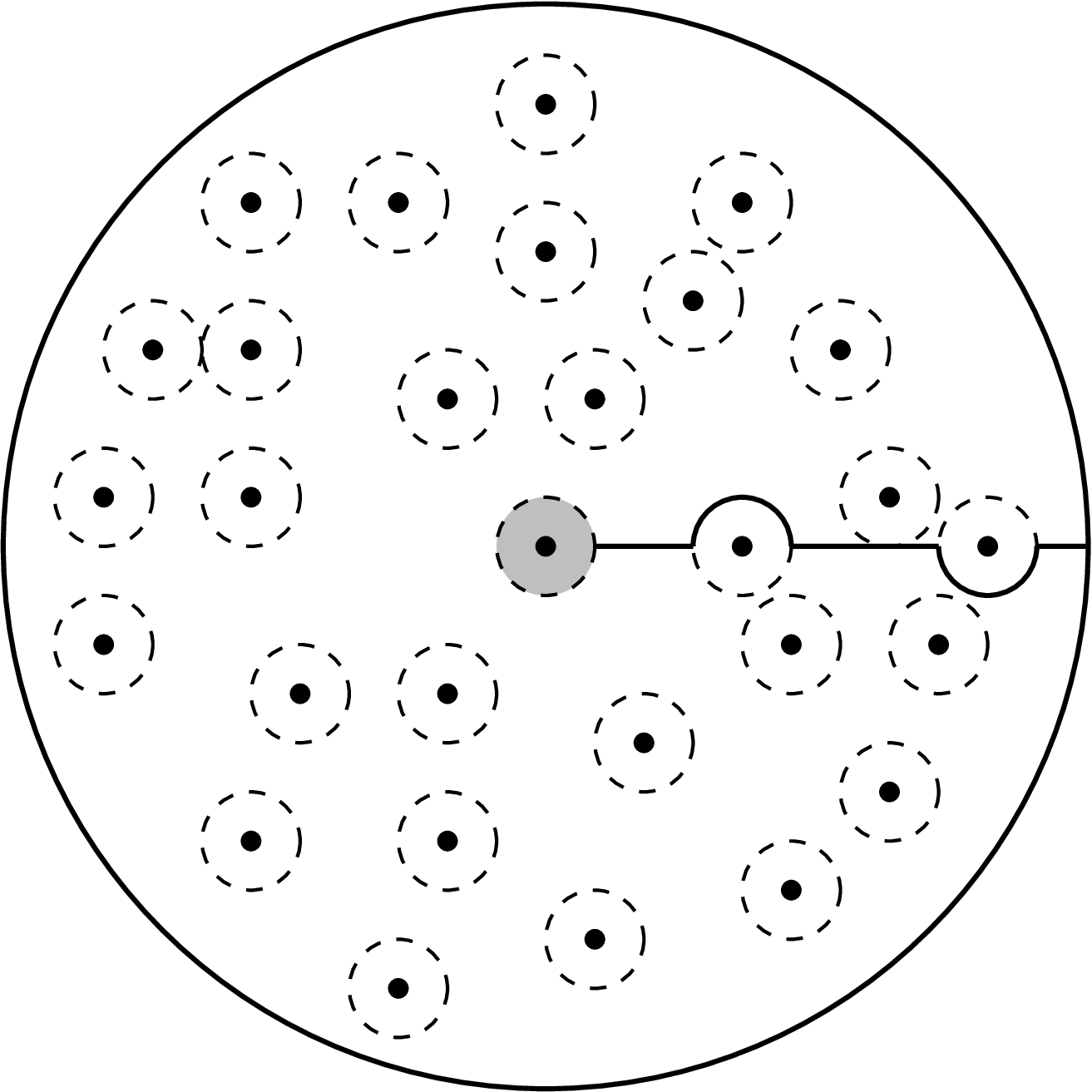}
}
\caption{ \label{AvoidingPath}
Each $D(a, \epsilon)$  can be connected to $\{|z|=1\}$ 
by a path  $\gamma$ of length $\leq \pi$ that stays at least
distance $\epsilon$ from every singular point. Such a 
path can be covered by $O(\epsilon^{-1})$ disks, 
each which has a double that misses the singular set.
Therefore the derivative  of 
any branch of $f^{-1}$ on $\gamma$ has 
comparable sizes at any two points of $\gamma$
(with a constant depending only on $\epsilon$).
}
\end{figure}

Therefore all the lifts of $\gamma$ via $f$ have bounded 
geometry with constants depending only on $\epsilon$. 
Thus if we take $I$ as above and a lift of $\gamma$ with 
one endpoint on $I$, then the other endpoint of the lifted 
curve  $\gamma'$ is on a partition arc $J$ of $\partial \Omega$. 
By Koebe's theorem   $|f'|$ has comparable values at all 
points of $I \cup \gamma' \cup J$  with constant that 
depends only on $\epsilon$. Since $\partial D_a$, $\gamma$ and 
$\circle$ all have comparable lengths (within a factor of 
$O(\epsilon^{-1})$), so do $I$, $\gamma'$ and $J$. 
This proves the lemma.  See Figure \ref{LiftedPath}. 
\end{proof}

\begin{figure}[htb]
\centerline{
\includegraphics[height=2.4in]{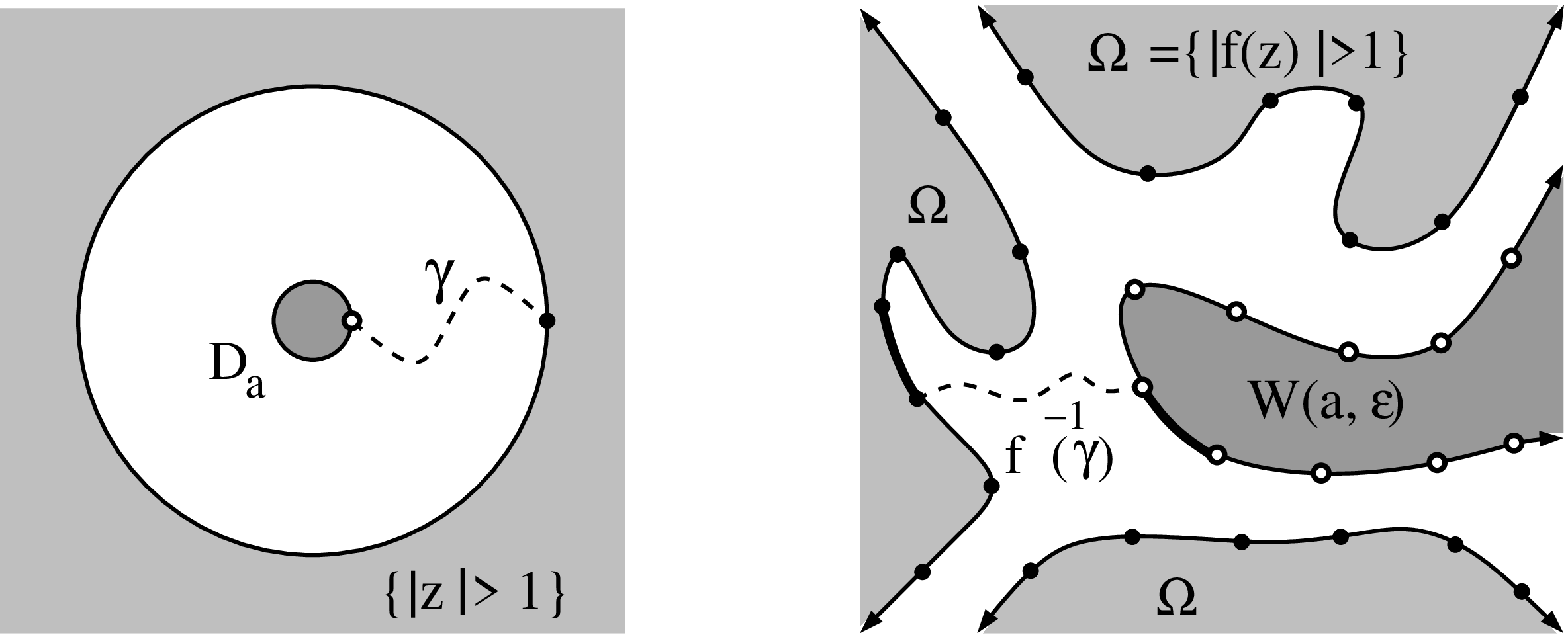}
}
\caption{ \label{LiftedPath}
As noted earlier, 
each $D(a, \epsilon)$  can be connected to $\{|z|=1\}$ 
by a path  $\gamma$  that stays at least distance 
$\epsilon$ away from  the singular set. 
A lift of this path via $f$ is a path that 
connects partition arcs of $\partial \Omega$
and $\partial W(a, \epsilon)$. As explained in 
the text, the lengths and  diameters
of the lifted curve and the arcs its connects are  all
comparable with a constant depending only on $\epsilon$. 
}
\end{figure}

\begin{cor}
With notation as above, there is a $r < \infty$,
 depending only on $\epsilon$, 
so that $f^{-1}(X) \subset T_\Omega(r)$.
\end{cor}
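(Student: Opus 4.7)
The plan is to extend the lifting/Koebe argument from the proof of Theorem \ref{near constant} so that it applies not just to points on $\partial W(a,\epsilon)$, but to every point of $f^{-1}(X)$. First I would record two basic facts about $X$: it is a closed ``Swiss cheese'' subset of $\overline{\disk}$, path connected, with $\dist(X,S(f))\geq\epsilon$, and every point of $X$ can be joined to $\circle$ by a path in $X$ of length at most some $L=L(\epsilon)<\infty$ (one takes the radial segment towards the origin and detours around each excluded disk $D(a,\epsilon)$ it meets; there are at most $O(\epsilon^{-2})$ such disks).

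Given $z\in f^{-1}(X)$, I would join $f(z)$ to a point $w\in\circle$ by such a path $\gamma\subset X$. Since $\gamma$ avoids $S(f)$, the covering $f\colon\complex\setminus f^{-1}(S(f))\to\complex\setminus S(f)$ furnishes a unique lift $\gamma'$ starting at $z$ and terminating at some $z'\in f^{-1}(w)\subset\partial\Omega$. Covering $\gamma$ by $O(L/\epsilon)$ disks of radius $\epsilon/2$ whose doubles miss $S(f)$, and applying Koebe's distortion theorem to the appropriate branch of $f^{-1}$ on each disk, I would chain the estimates to conclude that $|(f^{-1})'|$ has comparable values at all points of $\gamma$, with constants depending only on $\epsilon$. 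Integrating along $\gamma$ gives
\[
|z-z'|\ \le\ |\gamma'|\ \le\ C(\epsilon)\cdot|(f^{-1})'(w)|\ =\ C(\epsilon)/|f'(z')|.
\]

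Next let $J$ be the partition arc of $\partial\Omega$ containing $z'$ (if $z'$ is a partition vertex, pick either adjacent arc; adjacent arcs have comparable diameters by the bounded geometry of the conformal partition). Since $J$ maps bijectively to $\circle$ and $\circle$ sits at distance $\geq 4\epsilon$ from $S(f)$, a second Koebe chain along $\circle$ shows that $|f'|$ varies by a bounded factor on $J$, while Koebe's $\tfrac14$-theorem applied to an inverse branch of $f$ on a half-disk straddling $w\in\circle$ gives the matching lower bound on $\diam(J)$. Together these yield $\diam(J)\simeq 1/|f'(z')|$, with constants depending only on $\epsilon$. Combining with the previous display, $|z-z'|\le r\cdot\diam(J)$ for some $r=r(\epsilon)<\infty$, so $z\in J(r)\subset T_\Omega(r)$, as required.

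The main technical obstacle is keeping the constants uniform through the two Koebe chains—one along $\gamma$ and one along $\circle$—and selecting compatible branches of $f^{-1}$. Both issues are routine: the number of disks in each cover is bounded by a function of $\epsilon$ alone, the relevant branch along $\gamma$ is pinned down by $z$, and the branch along $J$ is then the one corresponding to the tract of $\Omega$ containing $J$.
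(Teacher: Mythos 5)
Your proof is correct and is essentially the paper's approach: you have simply made explicit the path-lifting plus Koebe-chain argument from the proof of Theorem \ref{near constant}, now with the lifted path allowed to start at an arbitrary point of $X$ rather than on some $\partial D_a$, and you reuse the same comparability $\diam(J)\simeq 1/|f'(z')|$ for partition arcs of $\partial\Omega$. The paper's own proof of this corollary is a one-line invocation of Theorem \ref{near constant} (``$\partial W(a,\epsilon)\subset T(r)$ and this implies $f^{-1}(X)\subset T(r)$''), and the work hidden in that ``this implies'' is exactly the lifting step you spell out --- a purely set-theoretic deduction from the boundary inclusion would not suffice, since a component of $f^{-1}(X^\circ)$ can be unbounded (e.g. $f=\cos$), so one really must control interior points via lifted paths as you do.
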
 

\begin{proof}
Note that  $\partial W(a, \epsilon) \subset T(r)$ and this implies
$f^{-1}(X) \subset T(r)$, as claimed.
\end{proof}

Theorem \ref{near constant}  immediately implies:

\begin{cor}
Suppose $f$ is in the Speiser class and  $S(f) \subset \disk$. 
For every $\epsilon >0$ there is a $r < \infty$ so that 
each  connected component of $\complex \setminus (\Omega\cup
T_{\Gamma}(r))$ (where $\Gamma = \partial \Omega$)
  maps under $f$ into some disk $D_a$
for some $a \in S(f)$.  If the connected component is unbounded, 
then $a$ must be an asymptotic value of $f$.
\end{cor}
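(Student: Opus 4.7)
The plan is to derive the corollary directly from the preceding one via a connectedness argument; no new geometric input should be needed beyond choosing $r$ appropriately. First I would fix $r$ large enough that the preceding corollary applies, so that $f^{-1}(X) \subset T_\Gamma(r)$, where $X = \overline{\disk} \setminus \bigcup_{a \in S(f)} D_a$, and let $V$ be any connected component of $\complex \setminus (\Omega \cup T_\Gamma(r))$.

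The first step is to locate $f(V)$ inside a single $D_a$. Since $V \cap \Omega = \emptyset$, the definition of $\Omega$ gives $|f| \leq 1$ on $V$, so $f(V) \subset \overline{\disk}$. Since $V \cap T_\Gamma(r) = \emptyset$, the preceding corollary forces $f(V) \cap X = \emptyset$. Because $\dist(S(f), \partial \disk) > 4\epsilon$, each $D_a$ is contained in $\disk$, and therefore $\overline{\disk} \setminus X = \bigcup_a D_a$ is a disjoint union of open disks; the image $f(V)$ is connected and must therefore lie in exactly one of them, giving $f(V) \subset D_a$ for a unique $a \in S(f)$.

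For the second statement, assume $V$ is unbounded. Then $V \subset f^{-1}(D_a) = W(a,\epsilon)$, and by connectedness $V$ sits inside a single connected component $U$ of $W(a,\epsilon)$, which is then unbounded as well. I would then invoke the trichotomy recalled at the start of this section: a component of $W(a,\epsilon)$ is bounded when $f$ acts on it as a one-to-one map or as a finite branched cover, and is otherwise an infinite-to-one cover of $D_a \setminus \{a\}$ carrying a curve to $\infty$ along which $f \to a$. In the present unbounded case this forces the third alternative, so $a$ is an asymptotic value of $f$. The main (minor) obstacle is simply keeping the two uses of $r$ in sync, which is resolved by enlarging $r$ once at the outset.
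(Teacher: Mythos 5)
Your proof is correct and follows the same route the paper intends: it derives the corollary from the preceding one ($f^{-1}(X)\subset T_\Gamma(r)$), which in turn is the direct consequence of Theorem \ref{near constant}. The connectedness argument pinning $f(V)$ into a single $D_a$, and the invocation of the trichotomy for unbounded components of $W(a,\epsilon)$, are exactly the details the paper suppresses when it calls the statement ``immediate.''
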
 

If the critical points of $f$ have uniformly bounded degree $D$, then the
components of $W(a,\epsilon)$ containing critical points  have boundaries 
with  at most $D$ partition arcs, each with diameter comparable to the 
whole component (the constant depending only on $D$). Since one of these 
arcs in contained in some $J(r)$ the whole component will be contained 
in $J( Cr)$ for some $C$ depending only of $D$.   
Thus 

\begin{cor}
If $f \in \classS$,  $S(f) \subset \disk$ and $f$
 has no finite asymptotic values and every 
critical point has uniformly bounded degree $D$, then there 
is a $r> 0$ so that  $\complex = \Omega \cup T(r)$ 
(as before, $\Omega = \{ |f| > 1\}$, 
and $r$ depends only on  $D$ and $\epsilon$, where 
$\epsilon$ is the minimal separation 
between different points of $S(f)$ and between  $S(f)$ and 
$\circle$). 
\end{cor}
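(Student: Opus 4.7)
The plan is to reduce the statement to the corollary immediately preceding it. That corollary says that for every $\epsilon > 0$, there is an $r_0 < \infty$ (depending only on $\epsilon$) such that every connected component of $\complex \setminus (\Omega \cup T(r_0))$ maps under $f$ into some disk $D_a$, and is bounded unless $a$ is a finite asymptotic value of $f$. Since by hypothesis $f$ has no finite asymptotic values, every such component is bounded and lies inside some bounded component $U$ of $W(a,\epsilon) = f^{-1}(D_a)$ for a critical value $a \in S(f)$. It suffices therefore to enlarge $r_0$ to some $r' = r'(D,\epsilon)$ so that every such $U$ is contained in $T(r')$.

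Fix such a component $U$. Because there are no asymptotic values, $f|_U : U \to D_a$ is a proper branched covering of some finite degree $d$, and the hypothesis on critical-point degrees forces $d \leq D$. The Jordan curve $\partial U$ is partitioned by the $d$ preimages of $a+\epsilon$ into $d$ arcs $I_1, \ldots, I_d$. A standard fact (extend any chord of $U$ in both directions until it meets $\partial U$) gives $\diam U = \diam \partial U$, and hence
\[
 \diam U \;\leq\; \sum_{j=1}^d \diam I_j \;\leq\; D\cdot\max_j \diam I_j.
\]

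Next, take $I$ to be a partition arc of maximal diameter, so $\diam I \geq \diam U / D$. By Theorem \ref{near constant} applied to $I$, there is an edge $J$ of $\partial \Omega$ with $I \subset J(r_0)$ and $\diam J \simeq \diam I$, with constants depending only on $\epsilon$. For any $z \in U$ and any $w \in I$ we have $|z-w| \leq \diam U \leq D \diam I$, while the inclusion $I \subset J(r_0)$ gives $\dist(w,J) \leq r_0 \diam J \lesssim \diam I$. The triangle inequality then yields $\dist(z,J) \lesssim \diam J$ with implicit constant depending only on $D$ and $\epsilon$, so $z \in J(r')$ for an $r'=r'(D,\epsilon)$. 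This shows $U \subset T(r')$, and combining with the preceding corollary gives $\complex = \Omega \cup T(r')$, as required.

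The argument is short and essentially routine; the one structural point that must be flagged is that without finite asymptotic values each component $U$ of $W(a,\epsilon)$ is a proper branched cover of $D_a$ of degree at most $D$, which both bounds the number of partition arcs of $\partial U$ by $D$ and justifies the diameter comparison above. I do not expect any genuine obstacle beyond this dichotomy.
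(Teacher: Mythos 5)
Your proof is correct and follows essentially the same line as the paper: the preceding corollary reduces the problem to showing each bounded component $U$ of $W(a,\epsilon)$ lies in $T(r)$ for a uniform $r$, and the degree bound $D$ limits the number of partition arcs on $\partial U$, so that one arc has diameter comparable to $\diam U$ and Theorem \ref{near constant} transfers the containment to a single edge $J$ of $\partial\Omega$. The paper asserts (slightly more strongly) that \emph{every} partition arc of $\partial U$ has diameter comparable to $\diam U$, whereas you only bound the largest one via the sum-of-diameters argument; both routes give the same conclusion with constants depending only on $D$ and $\epsilon$.
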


For the half-strip $S = \{ x+iy: x> 0, |y| < 1\}$ the partition
elements decay exponentially fast and it is obvious
that $\Omega \cup T_\Omega(r)$ is not the whole plane for
any finite $r$.  Thus this model can't be approximated in the 
sense of Theorem \ref{App Omega} without using extra tracts.
In the remainder of the paper we will show that something 
even stronger is true: the approximation of a half-strip   by 
a Speiser  model domain with a  single tract  is not
 possible  even if we allow: 
\begin{enumerate}
\item finite asymptotic values,
\item  high degree critical points, and 
\item  $\varphi$ to be non-conformal everywhere in the plane. 
\end{enumerate}

Very briefly, the problem with the half-strip is that 
the $\tau$-lengths for it and its complement behave 
so differently, that the comparability implied 
by Theorem \ref{near constant} cannot hold. 
See Figure \ref{TauBound}.

\begin{figure}[htb]
\centerline{
\includegraphics[height=.9in]{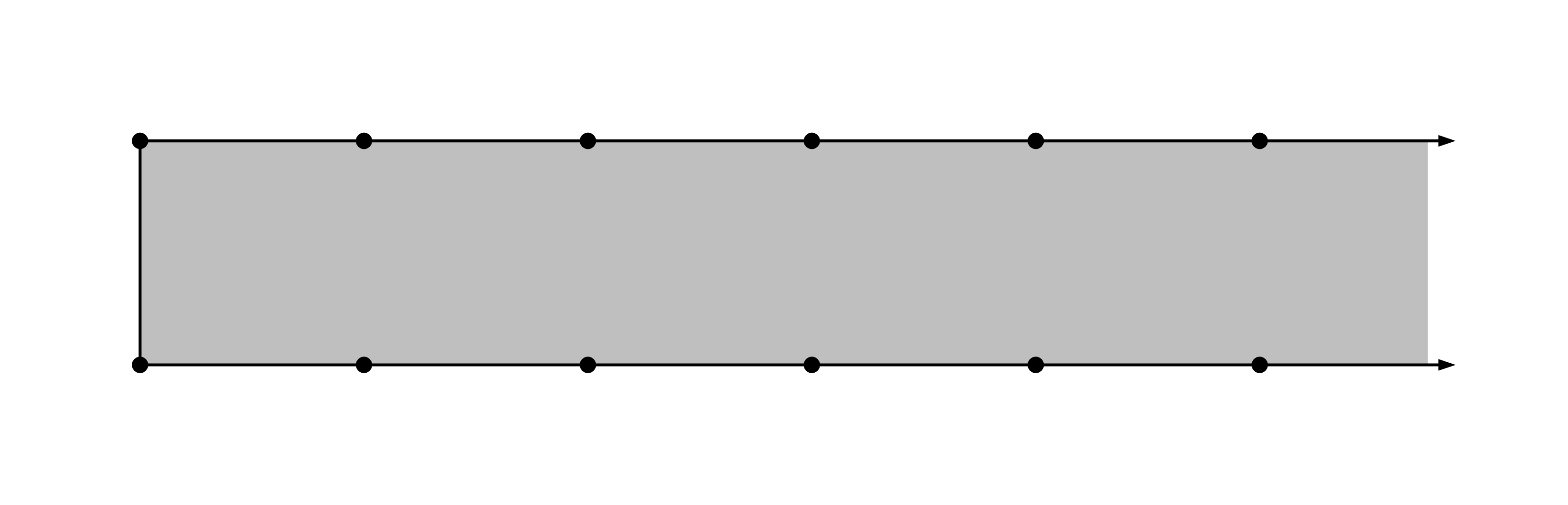}
$\hphantom{xxx}$
\includegraphics[height=.9in]{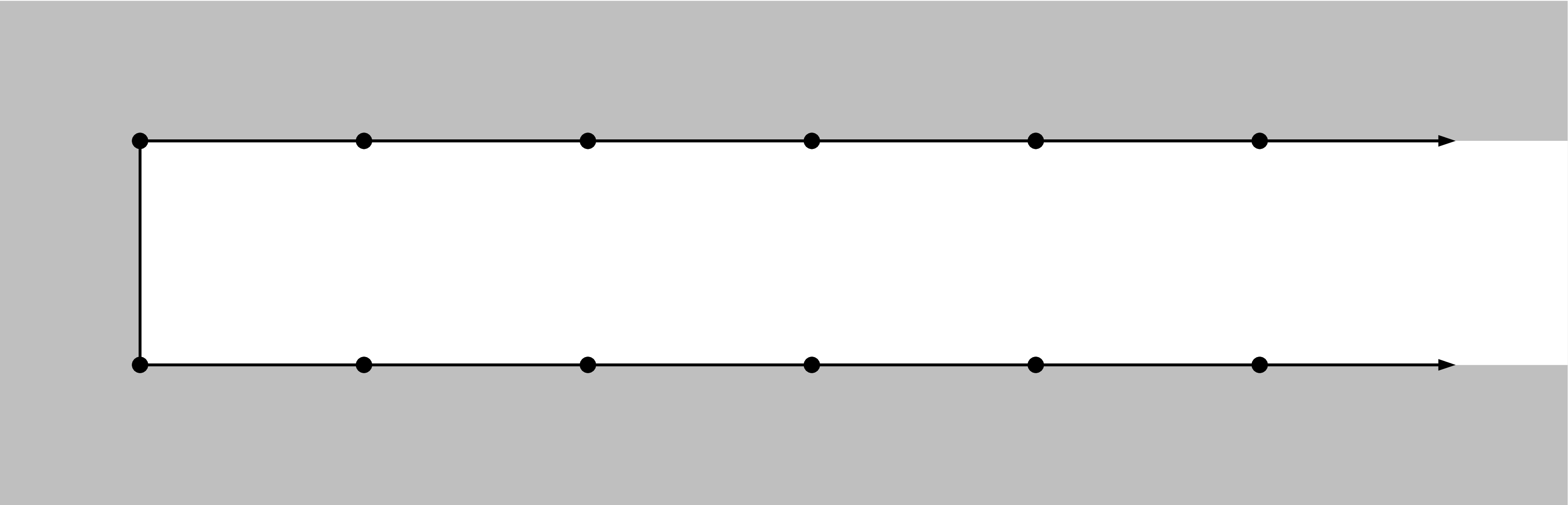}
}
\caption{ \label{TauBound}
Suppose $T$ is  the boundary of a  half-strip with unit 
spacing of the vertices, as shown above.
The conformal map of the half-strip (left) to 
a half-plane is $\sinh(z)$, and it
 expands exponentially, so the  unit 
segments shown each contain exponentially 
many conformal partition elements for the half-strip.
However,  the conformal map of the exterior domain  (right)  
to $\rhp$ 
behaves like $z^{1/2}$ near $\infty$, so  the   unit segments
are much smaller than conformal partitions elements 
should be.
This ``imbalance'' of $\tau$-sizes is what prevents the 
half-strip (or any quasiconformal image of it) from 
being a Speiser model domain.
The following sections will make this precise.
}
\end{figure}

\section{ A polynomial lower bound for thick tracts} \label{lower sec}

Suppose $\Omega$ is a simply connected planar domain bounded 
by a Jordan curve on the sphere that passes through $\infty$. 
Suppose $\tau:\Omega \to \rhp$ is 
conformal, maps $\infty$ to $\infty$ and  ${\cal J}$ is the partition 
of $\partial \Omega$  that corresponds via $\tau $ to the 
partition of $\partial \rhp$ with endpoints $i \pi \integers$
(recall that this is called a conformal partition of $\partial \Omega$).
In this section we want to prove that if a tract
$\Omega$  
is ``large'' in  a certain sense, then  the size of 
elements in  ${ \cal J}$ cannot tend to zero 
too quickly.  By ``large'' we will mean that $\Omega$ 
contains an unbounded quasidisk. For example, 
a sector $W_\theta = \{ z: |\arg(z) | < \theta  \}$, $0 < \theta < \pi$
 is an example of an unbounded quasidisk, so if $\Omega$ 
contains a sector, its partition elements cannot  have 
diameters that decrease exponentially quickly.

\begin{lemma} \label{HM 1} 
Suppose $\Omega$ is bounded by a Jordan curve  through $\infty$
and $\{\cal J\}$ is a  conformal partition of $\partial \Omega$.
Suppose $\Omega$ contains an unbounded quasidisk $W$. Then 
there is a  $R_0 < \infty$ so that any 
partition arc $J$ that hits a circle $\{|z|=R\}$ 
with $R > R_0$  satisfies
$ \diam (J)\geq C R^{- \sigma}$
for some $C>0$, $\sigma < \infty$, independent of $J$.
\end{lemma}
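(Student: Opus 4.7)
The plan is to use the unbounded quasidisk $W\subset\Omega$ to bound the growth of the conformal map $\tau:\Omega\to\rhp$, and then translate this bound, via Koebe's distortion theorem, into a polynomial lower bound on $\diam(J)$. I will first normalize by translating so that some point of $\partial W$ lies at the origin, and fixing a reference point $z_0\in W$ with $|z_0|\asymp 1$ and $\dist(z_0,\partial W)\gtrsim 1$; write $w_0=\tau(z_0)\in\rhp$.

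The central estimate is that $|\tau(z)|$ grows at most polynomially on $W$. Since $W$ is an unbounded quasidisk, pick a conformal map $\phi:\rhp\to W$ that extends to a $K$-quasiconformal homeomorphism of $\complex$; by Mori's theorem (Section \ref{review modulus}), $|\phi^{-1}(z)|\lesssim |z|^{1/\alpha}$ for $|z|$ large, where $\alpha$ depends only on $K$. The composition $g=\tau\circ\phi$ is a holomorphic self-map of $\rhp$, so Schwarz–Pick's lemma implies $g$ contracts the hyperbolic metric. Normalizing so that $g(w_0')=w_0$ for $w_0'=\phi^{-1}(z_0)$, and comparing hyperbolic to Euclidean distance in $\rhp$, I obtain $|\tau(z)|\lesssim|z|^{1/\alpha}$ for $z$ on the ``central path'' of $W$ supplied by Lemma \ref{unbounded quasidisk} (points with $\dist(z,\partial W)\gtrsim|z|$).

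For the partition arc $J$ hitting $\{|z|=R\}$, let $I=\tau(J)\subset\partial\rhp$ of length $\pi$ at height $y$, and let $z_I=\tau^{-1}(\pi+iy_c)$ be the Carleson point above $I$. Koebe's theorem gives $\diam(J)\asymp\dist(z_I,\partial\Omega)\asymp\pi|(\tau^{-1})'(\pi+iy_c)|$, so it suffices to lower bound $\dist(z_I,\partial\Omega)$ polynomially in $1/R$. I will do this in two parts. First, using that $J$ can be approached through $W$ (directly when $J\subset\partial W$, or via a chain argument otherwise), the growth bound on $|\tau|$ yields $y\lesssim R^{1/\alpha}$. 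Second, the hyperbolic distance in $\Omega$ from $z_0$ to $z_I$ equals $\rho_\rhp(w_0,\pi+iy_c)\asymp\log y$ by conformal invariance. On the other hand, via the triangle inequality and an auxiliary point $z_1\in W$ on the central path with $|z_1|\asymp R$, the monotonicity $\rho_\Omega\le\rho_W$ together with a standard quasihyperbolic upper bound in the simply connected domain $\Omega$ yields $\rho_\Omega(z_0,z_I)\lesssim\log R+\log(R/\diam J)$. Combining, $y\cdot\diam(J)\lesssim R^2$, and substituting the bound on $y$ produces $\diam(J)\gtrsim R^{-\sigma}$ for a suitable $\sigma$ depending only on $K$.

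The main obstacle I expect is propagating the polynomial growth of $\tau$ from $W$ to the components of $\Omega\setminus\overline W$: when $J$ is buried deep inside such a ``pocket'' rather than on $\partial W\cap\partial\Omega$, the direct Schwarz–Pick argument does not control $|\tau|$ at $J$. To handle this I plan to use that each pocket is itself a simply connected domain with boundary split into pieces on $\partial W$ (where $|\tau|$ is already polynomially bounded) and pieces on $\partial\Omega$ (where $\tau$ takes purely imaginary boundary values), and then either apply a maximum-principle/harmonic-measure argument to the subharmonic function $|\tau|$, or iterate the access argument by conformally mapping the pocket to $\rhp$ and reapplying Schwarz–Pick. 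Verifying this propagation cleanly and uniformly in the pocket geometry is where the care is needed; the rest of the argument is a routine assembly of Koebe, Mori, and Schwarz–Pick.
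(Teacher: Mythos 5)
Your route (Mori + Schwarz--Pick on $\tau|_W$ + Koebe at a Carleson point + quasihyperbolic distance) is genuinely different from the paper's, which never tries to control $\tau$ itself. The paper fixes one partition arc $I'$ near the origin and works with the modulus of the family of curves separating $I'$ from $J'$ in $\Omega$: by conformal invariance and Lemma~\ref{two intervals} this modulus is $\simeq\log n$, where $n$ is the separation on $\partial\rhp$. It then exhibits two subfamilies of separating curves living on disjoint sets --- concentric circular arcs about $0$ in $\Omega\cap\{R_0<|z|<R/2\}$, and concentric circular arcs about a point of $J'$ in $\{\diam J'<|z-z_{J'}|<1\}$ --- and bounds their moduli from below ($\gtrsim\log R$ using a definite-modulus quadrilateral at each dyadic scale supplied by the access curve $\gamma$ of Lemma~\ref{unbounded quasidisk}, and $\gtrsim\log(1/\diam J')$ from the annulus estimate, respectively). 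The parallel rule then relates everything to $\log n$. This argument needs no growth bound for $\tau$ anywhere, and in particular the geometry of $\Omega\setminus\overline W$ never enters.

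The gap you flagged in your own sketch is real, and your proposed repairs do not obviously close it. The maximum principle for the subharmonic function $|\tau|$ on a pocket $P$ of $\Omega\setminus\overline W$ is useless here: on the part $\alpha=\partial P\cap\partial\Omega$ the boundary values of $\tau$ lie on $i\reals$ but are otherwise unconstrained, so the maximum of $|\tau|$ over $\partial P$ is dominated by unknown data on $\alpha$, not by the controlled data on $\beta=\partial P\cap\partial W$. What you would actually need is the topological fact that $\tau(P)$ is the \emph{bounded} component of $\rhp\setminus\tau(\beta)$ (so $\sup_P|\tau|\le\sup_\beta|\tau|$); but since both $\partial W$ and $\partial\Omega$ pass through $\infty$, a pocket may itself accumulate at the prime end at $\infty$, and you would also need $\diam\beta\lesssim R$ to make Mori applicable along $\beta$ --- neither is free, and together they amount to a nontrivial lemma about the global geometry of $W$ inside $\Omega$ that your sketch does not supply. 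The alternative you mention, iterating Schwarz--Pick after uniformizing the pocket, hits the same wall: you still need quantitative control of where the pocket sits. There is also a separate problem in the assembly at the end: the ``standard quasihyperbolic upper bound'' $\rho_\Omega(z_0,z_I)\lesssim\log R+\log(R/\diam J)$ does not hold in a general simply connected domain (a long thin tube defeats it), and combining two \emph{upper} bounds for $y$ (namely $y\lesssim R^{1/\alpha}$ and $y\,\diam J\lesssim R^2$) with Koebe does not by itself produce the required \emph{lower} bound for $\diam J$. The paper's extremal-length formulation is what lets one sidestep all of this: the relevant path families only need to cross $\Omega$ radially near $\gamma$ and circularly near $J'$, and their moduli are estimated from below by explicit metrics without ever looking inside the pockets.
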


\begin{proof}
Since $\Omega$ contains an unbounded quasidisk, 
Lemma \ref{unbounded quasidisk} says there is a 
curve $\gamma \subset \Omega$ that  connects 
some point $z_0 \in \Omega$  to 
$\infty$ and has the property that  there is a 
$C_1 < \infty$ so that 
$  \dist(z, \partial \Omega ) \geq C_1 |z|,$
for all $z \in \Gamma$. 
See Figure \ref{LowerBoundProof}.

\begin{figure}[htb]
\centerline{
\includegraphics[height=2.3in]{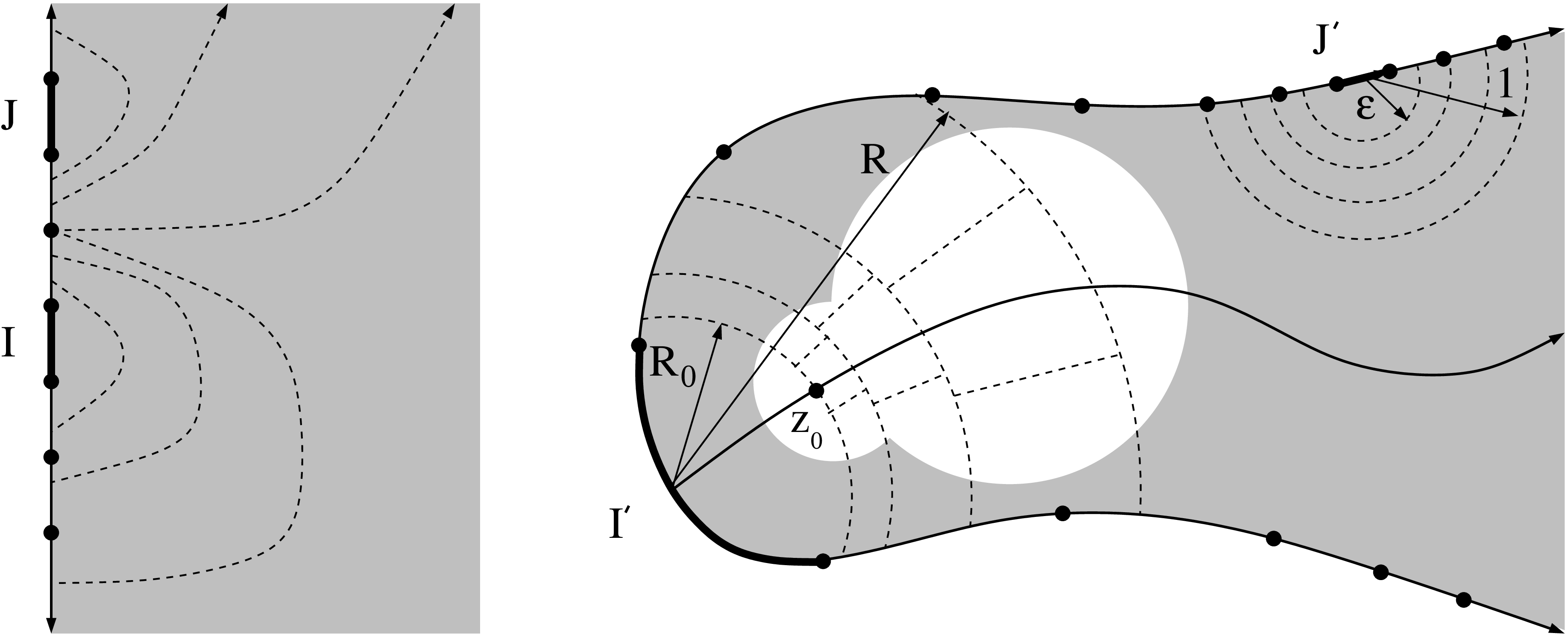}
}
\caption{ \label{LowerBoundProof}
The family  of all separating curves 
for two  unit intervals distance $n$ 
apart  on $ \partial
\rhp$ has modulus $\simeq \log n$. Thus  in 
$\Omega$ any separating family must have 
modulus $\gtrsim \log n$. Applying this 
to two families as in the text says that 
image arcs are separated by distance 
$R \lesssim n^\alpha$ and the second arc
has diameter $\epsilon \gtrsim n^{-\beta}$, 
for some finite, positive $\alpha$ and $\beta$. 
}
\end{figure}

Choose $R_0$ large enough so that  $|z_0| < R_0$ 
and so that $\{|z|< R_0\}$ contains  
 some partition element 
 $I' = \tau^{-1}(I)  \in  \partial \Omega \cap {\cal J}$.
Fix $R \gg   4R_0  $ and 
choose a partition element $J$ so that  $J'=\tau^{-1} 
(J) \subset \partial \Omega$ hits $\{|z|=R\}$. If $J'$ 
has diameter greater than $1$, there is nothing to 
do, so we may assume $ \epsilon=\diam(J') \leq 1$ and hence 
$J' \subset  \{ z: R-1 \leq |z| \leq R+1\}$.
See Figure \ref{LowerBoundProof}.

Suppose  $I$ and $J$    (which are
equal length intervals on $\partial \rhp$)   are separated by $n$ 
other  partition elements. 
By Lemma \ref{two intervals}, $M(I,J) \simeq \log n$,  (recall 
that $M(I,J)$ is the  modulus of the path family   
 that separates $I$ and $J $ in $\rhp$).

We consider two other  path 
families in $\Omega$. 
 First, let  $\Gamma_1$ be the family of circular arcs 
in $\Omega \cap \{ z: R_0 < |z| < R/2\} $ 
concentric with $0$ that connect  the two components
of $\partial \Omega \setminus I'$.
 See Figure \ref{LowerBoundProof}.
 Let $z_{J'}$ be any point of $J'$ and 
let   $\Gamma_2$  be the path family  consisting of  circular  
arcs in $\{ z : \diam(J') <   |z-z_{J'}| < 1\}$ that are 
concentric with $z_{J'}$ and connect different components of 
$ \partial \Omega \setminus J'$. 
See Figure \ref{LowerBoundProof}. 

Each path in $\Gamma_1$ and $\Gamma_2$ separates $I'$ from 
$J'$ in $\Omega$, so by conformal invariance and the 
parallel rule for modulus, we have 
$$ \log n \simeq M(\Gamma_0) \geq M(\Gamma_1) + M(\Gamma_2),$$
and thus both terms on the right are bounded by $C_2 \log n$ 
for some $C_2 < \infty$.

Because  $\gamma$ crosses each element of $\Gamma_1$
and each crossing point $z$ is at least distance $C_1 |z|$
from $\partial \Omega$, we deduce that there is a  
quadrilateral region 
$$Q = \{ w:  |\arg(w)-\arg(z) | \leq  A, 
1 \leq |w/z| \leq B\}$$
 contained in $\Omega$   for some
$A >0$, $B > 1$ depending only on the constant $C_1$. 
The path family in $Q$ connecting the two radial sides 
of $Q$ has fixed modulus $M_Q$ and this modulus is a lower bound
for the modulus of the path family in 
$\Omega\cap\{ |z| < w < B|z|\}$ connecting different 
components of $\partial \Omega \setminus I'$.  
Thus, by the parallel rule,  the 
modulus of $\Gamma_1$ is bounded below by $M_Q   \cdot 
\lfloor\log_B (R/R_0)\rfloor$. In other words, 
$ \log n \gtrsim M(\Gamma_1) \gtrsim \log R, $ for $R$ large.
Hence, when $R$ is large, we have 
$R \leq n^\alpha$ for some $\alpha$ that only depends on 
the constant $C_1$. 

On the other hand, the usual estimate of the modulus of 
an annulus says that 
$$ \log  \frac 1{\diam(J') }  \lesssim \log n ,$$
so 
$ \diam(J') \gtrsim n^{-\beta},$
for some $\beta >0$.  Thus 
$$\diam(J') \gtrsim (R^{1/\alpha})^{-\beta}
\gtrsim R^{-\beta/\alpha} = R^{-\sigma},$$
 as desired. 
\end{proof}

\section{An exponential upper bound for thin tracts}  \label{upper sec}

We now want to do the opposite of the previous section: 
show that if $\Omega$ is  ``thin'',
then the  diameters of partition elements decay 
faster than any polynomial.

\begin{lemma} \label{HM 2} 
Suppose $\Omega$ is the image of the  half-strip 
$S = \{ x+iy: x >0, |y| < \frac 12\}$ under a $K$-quasiconformal 
map  $\phi$ of the plane fixing $0$ and $1$ 
and ${\cal J}$ is a conformal partition of $\partial \Omega$. 
Then  all the partition elements  satisfy
$$ \diam (J)  \leq C_1 \exp(-C_2\dist(J,0)^\alpha ) ,$$
for  some  finite, positive constants $C_1, C_2, \alpha$ depending on $K$. 
\end{lemma}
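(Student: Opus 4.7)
The approach is to introduce the explicit conformal map $\tau_S(z) = \sinh(\pi z) : S \to \rhp$ as a change of coordinates, carry out the partition analysis there, and then transfer the estimate from $\partial S$ to $\partial\Omega$ via Mori's theorem for $\phi$. Note that $\tau_S$ sends $0$ and $\infty$ to themselves. Normalizing $\tau_\Omega$ so that it also fixes $0$ and $\infty$, the composition
$$h := \tau_\Omega \circ \phi \circ \tau_S^{-1} : \rhp \to \rhp$$
is a $K$-quasiconformal self-map of $\rhp$ whose boundary extension is a quasisymmetric self-homeomorphism of $i\reals$ fixing $0$ and $\infty$. The partition of $\partial \Omega$ pulled back via $\phi^{-1}$ corresponds under $\tau_S$ to the set $h^{-1}(i\pi\integers) \subset i\reals$.

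The first step is to analyze this partition on $i\reals$. Write $h^{-1}(i\pi n) = i\tilde t_n$ for $n \ge 1$. The standard Hölder theory of $K$-quasisymmetric maps gives two-sided polynomial control $c\, n^{1/\gamma} \le \tilde t_n \le C\, n^\gamma$ for some $\gamma = \gamma(K)$, and applying the distortion function $\eta$ of $h^{-1}$ to the pair $(i\pi n,\, i\pi(n+1))$ against the basepoint $0$ yields the relative-gap bound
$$\frac{\tilde t_{n+1} - \tilde t_n}{\tilde t_n} \le \eta\!\left(\tfrac{1}{n}\right) \le C\, n^{-\beta}$$
for some $\beta = \beta(K) > 0$. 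Since $\tau_S^{-1}(iy)$ for $y \ge 1$ lies on the upper ray $\{x+i/2 : x > 0\}$ with real part $\pi^{-1}\cosh^{-1}(y) = \pi^{-1}\log(2y) + O(y^{-1})$, the $n$th partition point on $\partial S$ sits at distance $\asymp \log n$ from $0$, while the Euclidean gap to its successor is $\pi^{-1}\log(\tilde t_{n+1}/\tilde t_n) \lesssim n^{-\beta}$. Combining these observations, every partition arc $J' \subset \partial S$ obeys
$$\diam(J') \le C' \exp\bigl(-c'\, \dist(J',0)\bigr),$$
with $c', C'$ depending only on $K$; the lower ray is handled symmetrically.

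The second step is to push this back through $\phi$. Since $\phi$ is $K$-quasiconformal on the plane with $\phi(0)=0$ and $\phi(1)=1$, Mori's theorem gives the Hölder upper bound $\diam(\phi(E)) \le C\,\diam(E)^{1/K}$ once $\diam(E) \le 1$, and the usual polynomial growth $|\phi(z)| \le C|z|^K$ for $|z|$ large yields $\dist(J',0) \ge c\, \dist(J,0)^{1/K}$. Plugging these into the estimate of the previous paragraph produces
$$\diam(J) \le C_1 \exp\bigl(-C_2 \dist(J,0)^{1/K}\bigr),$$
which is the desired bound with $\alpha = 1/K$.

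The main obstacle is the relative-gap estimate in the first step. A priori, a quasisymmetric self-map of the line only distorts ratios of adjacent equal-length intervals by a bounded factor, so showing $(\tilde t_{n+1}-\tilde t_n)/\tilde t_n \to 0$ forces one to exploit the Hölder behavior of the distortion function $\eta$ near $0$, not merely its continuity. This is where the qc constant $K$ enters the final exponent $\alpha$; everything else is bookkeeping using the explicit form of $\tau_S$ and Mori's estimates for $\phi$.
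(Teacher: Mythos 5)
Your proof is correct, but it takes a genuinely different route from the paper's. The paper factors $\phi = g_2\circ g_1$ by the measurable Riemann mapping theorem, with $g_1$ conformal off $S$ and $g_2$ conformal on $g_1(S)$; it then pushes the squares $S_n$ through $g_1$, uses the quasi-invariance of extremal distance to show $\partial (g_1(S_n))$ meets exponentially many partition arcs, and invokes the Teichm\"uller--Wittich--Belinski\u\i\ theorem to conclude $g_1(z)\sim Az$, which pins $\dist(g_1(S_n),0)\asymp n$; the estimate is finally pushed through $g_2$ via bi-H\"older continuity. You instead uniformize $S$ explicitly by $\sinh(\pi z)$, reduce everything to the boundary trace of the $K$-quasiconformal self-map $h$ of $\rhp$, and get the partition-gap decay from the power-law bound $\eta_K(t)\lesssim t^{1/K}$ on the quasisymmetry function of a plane $K$-qc map (in effect a restatement of Mori's theorem via the Teichm\"uller ring modulus). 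Both proofs require one substantive quasiconformal input --- Theorem~\ref{TWB thm} on the paper's side, the power-law $\eta_K$ on yours --- but your route sidesteps the factorization of $\phi$ and produces the explicit exponent $\alpha=1/K$, which the paper leaves unspecified.

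Two small points to tighten. First, the claim ``Mori's theorem gives $\diam(\phi(E))\le C\,\diam(E)^{1/K}$ once $\diam(E)\le 1$'' is not literally true for $E$ far from the origin; what the quasisymmetry actually gives is $\diam(\phi(E))\lesssim \bigl(\diam(E)/\dist(E,0)\bigr)^{1/K}\,\dist(\phi(E),0)$, and after inserting the polynomial growth $\dist(\phi(E),0)\lesssim \dist(E,0)^K$ you pick up an extra factor $\dist(E,0)^{K-1/K}$. This is harmless because the exponential decay $\diam(J')\lesssim e^{-c\,\dist(J',0)}$ from your first step swallows any polynomial, and the conclusion $\diam(J)\le C_1\exp(-C_2\dist(J,0)^{1/K})$ survives --- but the bookkeeping as written would not compile. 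Second, the normalization ``$\tau_\Omega$ fixes $0$ and $\infty$'' is achieved by an imaginary translation of $\tau_\Omega$, which shifts the conformal partition; since the lemma is about \emph{a} conformal partition this is fine, but it is worth saying that the shifted partition differs from the original by at most one arc, so the estimate transfers. Finally, the finitely many partition arcs near the corners $\pm i/2$ of $S$ (where $\cosh^{-1}$ and the quasisymmetry triple degenerate) are handled trivially by enlarging $C_1$.
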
 

\begin{proof}
Using the measurable Riemann mapping theorem, we 
can write $\phi$ as a composition  of two 
$K$-quasiconformal maps $\phi=g_2 \circ g_1$, where 
both maps also fix  both $0$ and $1$,
 $g_1$ is conformal outside $S$ and $g_2$ 
is conformal on  $W=g_1(S)$.

Consider  the square  $S_n$  inside  $S$ between 
the vertical lines 
$\{ x= n\}$ and $\{ x = n+1\}$. Then  $W_n=g_1(S_n)$ 
is a quasidisk and its image in $\rhp$ under the conformal 
map $\tau: \rhp \to W$ is generalized quadrilateral $Q_n$ with 
two sides on $\partial \rhp$ and modulus bounded above and below, 
depending only on $K$.
See Figure \ref{ExpDecayProof}. 

\begin{figure}[htb]
\centerline{
\includegraphics[height=3.0in]{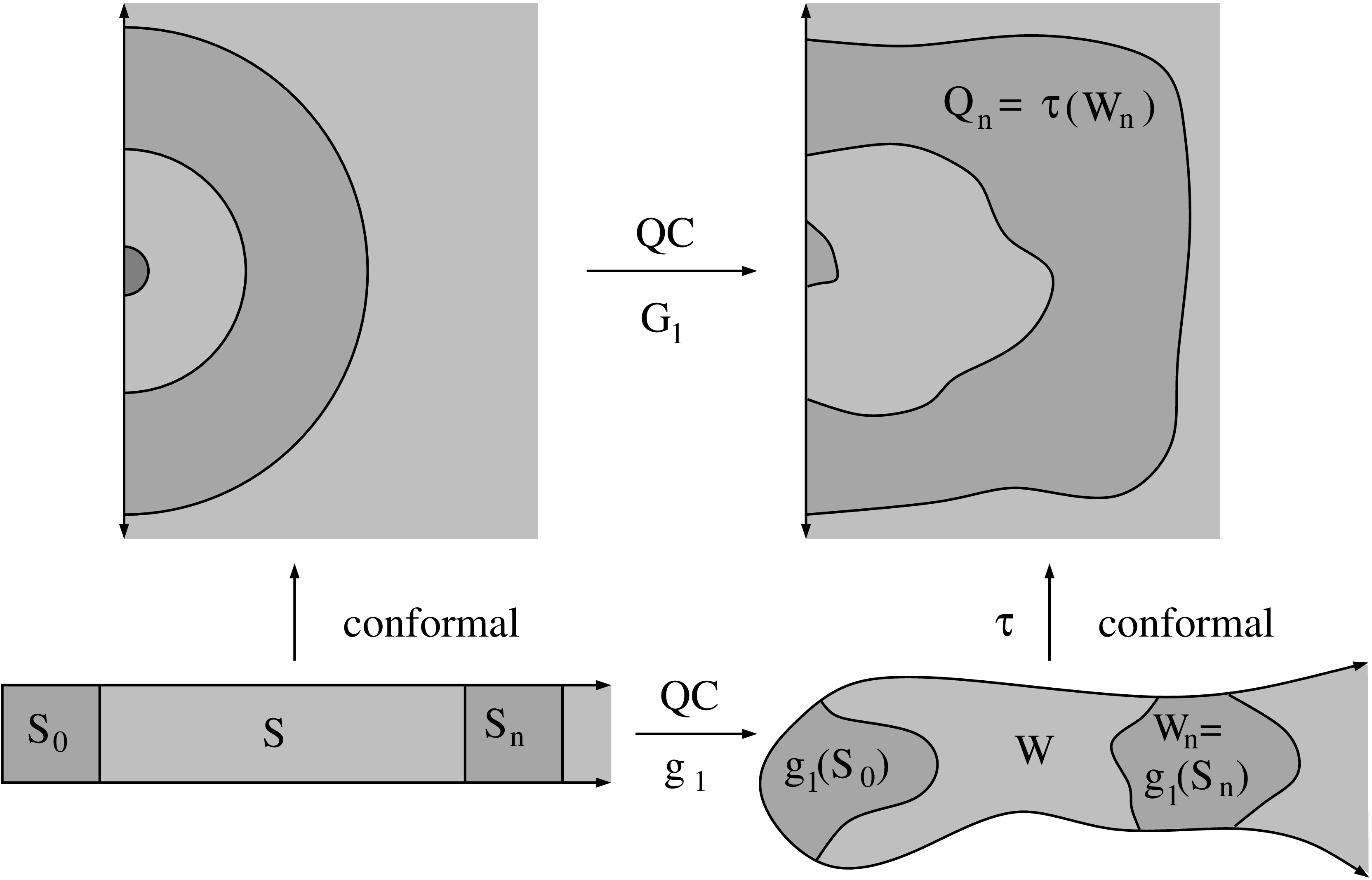}
}
\caption{ \label{ExpDecayProof}
The modulus separating $S_0$ and $S_n$ in the 
half-strip is comparable to $n$, so  by quasi-invariance
the same is 
true for $Q_0$ and $Q_n$ in $\rhp$.
This implies the Euclidean 
diameter of $Q_n$ grows exponentially, hence 
$\partial Q_n \cap \partial \rhp$ contains exponentially 
many partition points. The map from $Q_n$ to $W_n$ is 
conformal, so the same is true of $W_n$ and partition 
arcs for $W = g_1(S)$. 
}
\end{figure}

Since the extremal length between $S_0$ and $S_n$ in $S$ is 
$\simeq n$, the same is true for the extremal distance 
between $Q_0$ and $Q_n$ in $\rhp$in $\rhp$  (with a constant depending on the 
maximal dilatation  $g_1$). This implies that the  
 diameters of $Q_n$ must grow exponentially, as 
do the component intervals of $\partial Q_n \cap 
\partial \rhp$ (this is the 
same argument as in the proof of Lemma \ref{tube bound}). 
Thus  $\partial Q_n$ hits $\geq c e^{cn}$ partition intervals
on $\partial \rhp$  
for some fixed $c >0$ (depending only on $K$). Hence $W_n$ hits the same 
number of partition arcs on $\partial W$. Because $W_n$ is 
the image of $U_n$ under a  bi-H{\"o}lder  map (since it 
is a quasidisk), each of these partition arcs has diameter bounded 
by $C \diam(W_n) \exp( - an)$ for another constant $a$ depending 
only on $K$. 

 Since  $g_1$ has dilatation supported in the half-strip $S$ and 
$$   \int_S \frac {dxdy}{1+x^2+y^2}   <  \infty,$$
Theorem  \ref{TWB thm} implies that 
$|g_1(z)/z|$ has a limit as $z \to \infty$.
Thus if  $R_n = \dist(W_n,0)$ we have   $ R_n \simeq n$ 
and $\diam(W_n) \lesssim n$. Thus  
all the partition elements hitting $W_n$ have 
diameters less than $ c n e^{-an}$ where $a,c$ are 
positive constants that depend only on $K$.

Since $g_2$ is conformal on $W$, the partition for $\Omega$ is 
just the image of the partition for $W$ under $g_2$, and since
$g_2$ is bi-H{\"o}lder (with exponent depending only on $K$),
the estimate in the lemma follows.
\end{proof} 

The proof can be applied to other tracts that look like
thin tubes, e.g.,  
$$ \Omega = \{ x+iy: x>0, |y| \leq \eta(x)\},
$$ 
if $\eta(x), \eta'(x)  \to 0$ as $x \to \infty$.
However, the proof does not work for all subdomains
of the half-strip, since adding ``rooms'' to the sides
of a half-strip can create partition arcs whose diameters 
do not tend to zero (in fact, we used a similar 
construction in the proof of Lemma \ref{single component}.)

\section{The half-strip is not the QC-image of a 
         Speiser model domain} \label{not}

Before starting the proof of Theorem \ref{not thm}, we 
record the following result that is immediate from 
Lemma \ref{QC edge nbhds}.

\begin{cor} \label{QC image}
Suppose $\Omega$ is a model domain.
If $\phi$ is a $K$-quasiconformal map of the plane that is conformal 
on $\Omega$, then 
$$ 
T_{\phi(\Omega)}(t) \subset \phi(T_\Omega(r)) \subset T_{\phi(\Omega)}(s),$$
 where $t,s$ depend only on $r$ and $K$. 
\end{cor}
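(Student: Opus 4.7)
The plan is to reduce the corollary directly to the edge-by-edge statement of Lemma \ref{QC edge nbhds}. First I would verify that $\phi$ sends edges of $T_\Omega = \partial\Omega$ to edges of $T_{\phi(\Omega)} = \partial\phi(\Omega)$. This is where the hypothesis that $\phi$ is conformal on $\Omega$ is used: if $\tau:\Omega\to\rhp$ is the conformal map defining the model, then $\tau\circ\phi^{-1}:\phi(\Omega)\to\rhp$ is also conformal and serves as the corresponding map for $\phi(\Omega)$. Consequently, the partition vertices $\tau^{-1}(\rho+i\pi\integers)$ on $\partial\Omega$ are carried by $\phi$ to the partition vertices of $\partial\phi(\Omega)$, so each edge $e$ of $T_\Omega$ has image $\phi(e)$ which is an edge of $T_{\phi(\Omega)}$.

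Next I would apply Lemma \ref{QC edge nbhds} to each individual edge. For every edge $e$ of $T_\Omega$, with $\sigma_e = \phi(e)$, the lemma produces constants $0 < s < t < \infty$ depending only on $r$ and $K$ (and not on $e$) such that
$$ \sigma_e(s) \subset \phi(e(r)) \subset \sigma_e(t). $$
Crucially, the bounds in Lemma \ref{QC edge nbhds} depend only on $r$ and $K$, not on the specific arc, so the same pair $(s,t)$ works uniformly for all edges.

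Finally, I would take unions over the edges of $T_\Omega$. On the left,
$$ T_{\phi(\Omega)}(s) = \bigcup_e \sigma_e(s) \subset \bigcup_e \phi(e(r)) = \phi\bigl(T_\Omega(r)\bigr), $$
and on the right,
$$ \phi\bigl(T_\Omega(r)\bigr) = \bigcup_e \phi(e(r)) \subset \bigcup_e \sigma_e(t) = T_{\phi(\Omega)}(t). $$
Re-labeling the two constants matches the ordering $t,s$ used in the statement. I do not expect any real obstacle here — the only point that warrants a sentence of care is the identification of edges under $\phi$ (which uses conformality on $\Omega$); everything else is a direct application of Lemma \ref{QC edge nbhds} followed by taking unions, so the corollary is indeed ``immediate.''
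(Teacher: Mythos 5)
Your argument is exactly the paper's own route: the paper labels the corollary as ``immediate from Lemma~\ref{QC edge nbhds},'' and your write-up simply supplies the details of that deduction — identifying edges of $T_\Omega$ with edges of $T_{\phi(\Omega)}$ via conformality of $\phi$ on $\Omega$, applying the lemma edge-by-edge with uniform constants depending only on $r$ and $K$, and taking unions. The proof is correct.
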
 

We can now prove Theorem \ref{not thm}: 
the half-strip $S = \{ x+iy : x>0, |y|< 1\}$ cannot be mapped to any  
Speiser class  model domain by any quasiconformal homeomorphism
of the plane.

\begin{proof} 
Suppose there were  a $K$-quasiconformal 
map $\phi$  of the plane taking $S$ to the tract $\Omega=\{ z: |f(z)| >R\}$
of some  $f \in \classS$.
Choose $\epsilon$ as in Theorem \ref{near constant} and let 
$r$ be as given by that theorem. Let $s$ be as given 
by Corollary \ref{QC image}.

As  in the proof of Lemma \ref{HM 2},
 write $\phi= g_2 \circ g_1$  where $g_1$ 
is conformal off $S$, $g_1$ is conformal on 
$W=g_1(S)$.  
Using Theorem \ref{TWB thm} again implies that 
that we can choose $g_1$ so that 
$$W \cup T_W(s)  \subset V = \{z:|z|< R\} \cup \{z: |\arg(z)| < 
\pi /4\} ,$$
if $R$ is large enough (depending on $s$). Note that 
$V^c$ is a quasidisk and hence $V'=g_2(V^c)$ is a quasidisk 
as well. 
By Lemma \ref{QC image}, this domain is contained in the complement 
of $\Omega \cup T_\Omega(t)$. Therefore  $V'$  is contained inside 
some component $U$  of $W(a, \epsilon)$ for $a \in S(f)$.

Lemma \ref{HM 1} applies to $U$ and Lemma \ref{HM 2} applies 
to $\Omega$, giving estimates that contradict the conclusion 
of Theorem \ref{near constant} (partition elements for 
$\partial U$  are contained in $r$-neighborhoods of partition 
elements for $\partial \Omega$). This proves that $\Omega$ could 
not have been the tract of any $f \in \classS$.
\end{proof}

Although the half-strip  cannot be approximated by 
Speiser class  model domains with a single tract, 
Figure \ref{TwoExtra} shows 
how it can be approximated by  functions in $ \classS_{2,0}$ with 
two, three  or infinitely many tracts.
To apply Theorem \ref{Exists}, 
 we must  add vertices so that the 
bounded geometry and $\tau$-length conditions  are
satisfied. Bounded geometry is trivial in these 
pictures using vertices from the obvious lattice 
and  Lemma \ref{tube bound} easily gives a strictly 
positive $\tau$-length lower bound for each tract 
(as usual, we can then multiply $\tau$ on each 
component by a positive constant to attain a $\tau$-length 
lower bound of $\pi$). 

\begin{figure}[htb]
\centerline{
\includegraphics[height=1.7in]{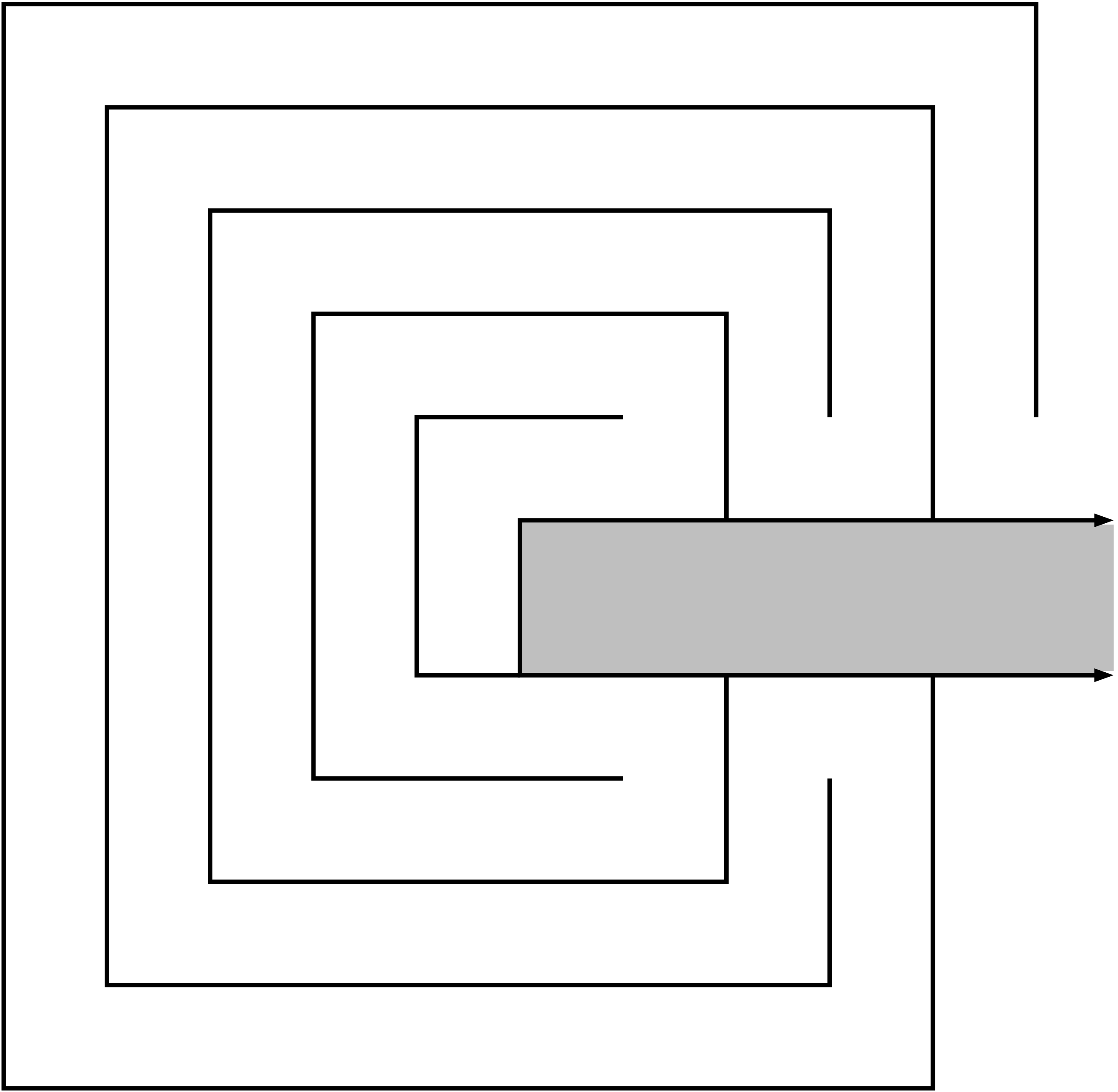}
\hphantom{xxxxx} 
\includegraphics[height=1.7in]{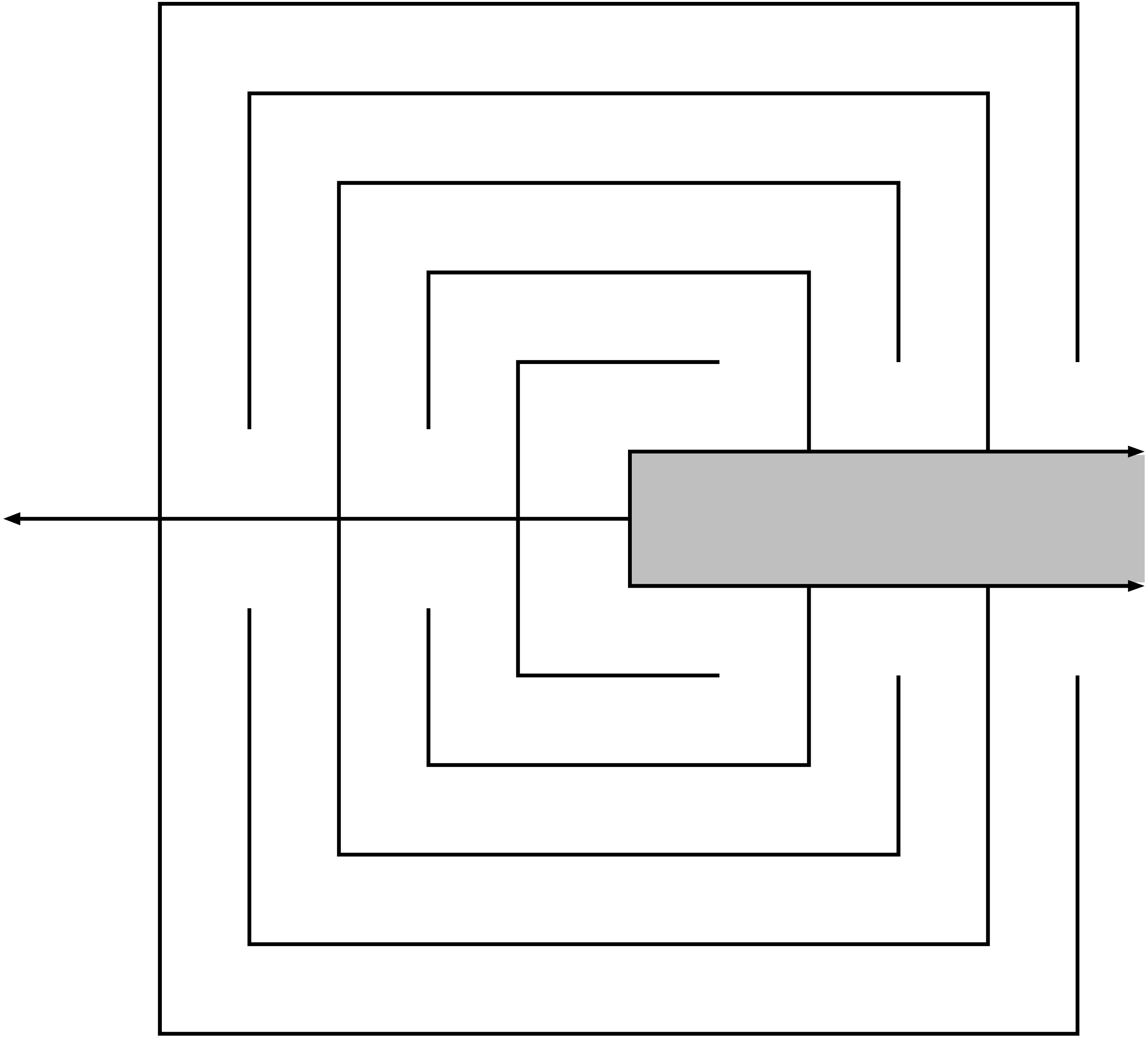}
\hphantom{xxxxx} 
\includegraphics[height=1.7in]{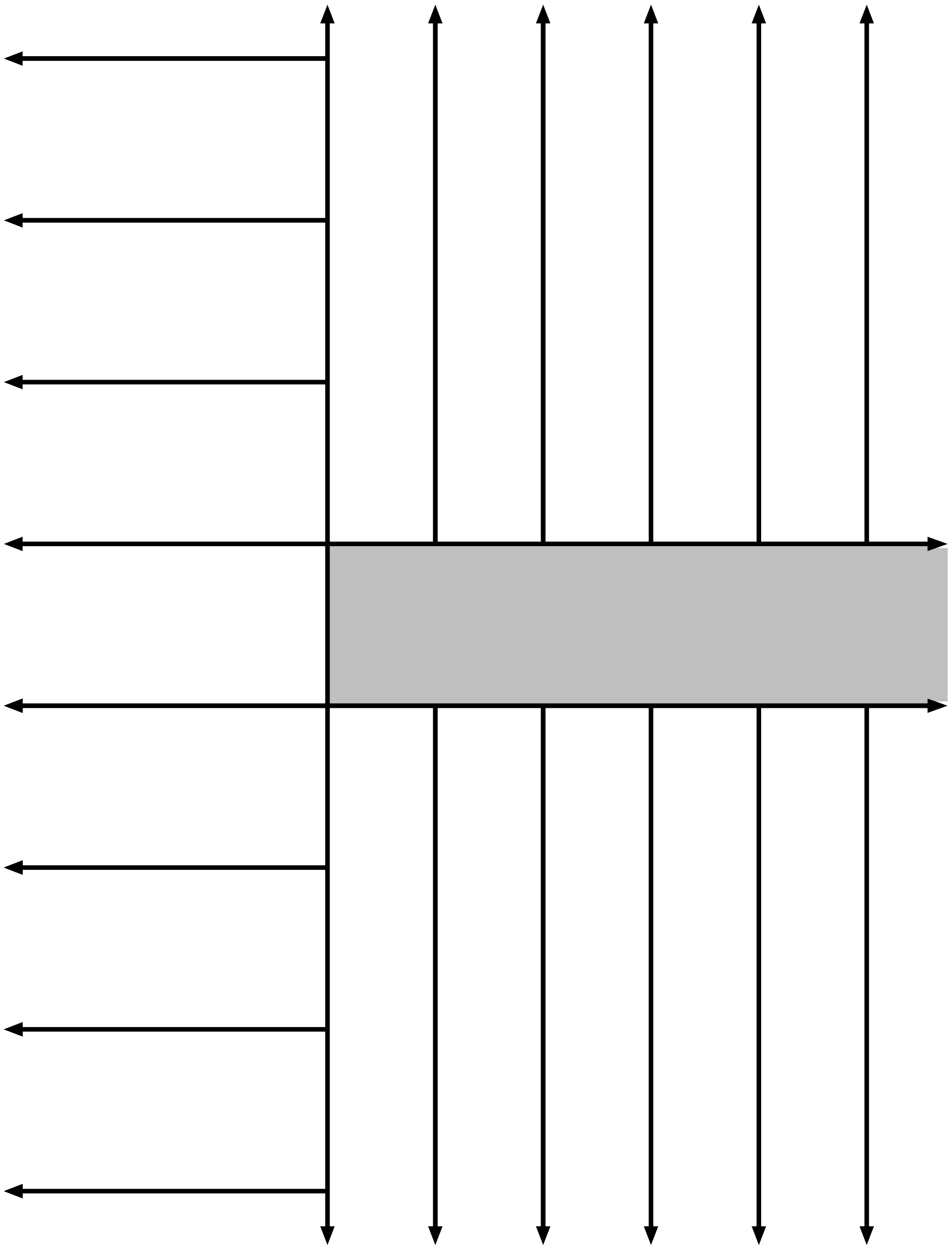}
}
\caption{ \label{TwoExtra}
Although the half-strip  cannot be a Speiser class
model domain (or even  quasiconformally mapped 
to a Speiser class model domain), it can be approximated
by a tract of such a model. These pictures show 
some ways this can occur using 
two, three  or infinitely many tracts.
}
\end{figure}

Lemma \ref{tube bound} also implies that the conformal partition of 
each of the drawn tracts has elements whose sizes decay 
exponentially quickly. This implies that $T(r)$ (the set 
where the quasiconformal correction map $\varphi$ in 
Theorem \ref{Exists} has its dilatation supported) has 
finite Lebesgue area.  If we replace $\tau$ by 
a large positive  integer multiple of itself, say 
$N \cdot \tau$, each edge of the conformal partition is 
divided into $N$ segments. This implies that the 
area of  $T(r)$ 
decreases to zero as  $N \nearrow \infty$. 
However, the maximal dilatation of $\varphi$ remains bounded, 
independent of $N$. 
A standard argument then shows that 
the corresponding map 
$\varphi$  tends uniformly on compact sets (or uniformly 
on the Riemann sphere)  
to the identity map as $N$ increases.
A more careful argument shows that we can normalize the 
correction maps so that they tend  to the 
identity  uniformly with respect to the Euclidean metric 
on the whole plane 
(e.g., see Theorem 1.1 of \cite{Bishop-small-support};
 our examples satisfy the $(\epsilon, \varphi)$-thin 
hypothesis  of that result).
 Thus the tracts of the resulting  Speiser functions
can approximate the tracts  in Figure \ref{TwoExtra} 
as closely as we wish in the Hausdorff metric on the plane. 

In general, it seems that 
the shapes of  individual tracts of Speiser model 
domains  and 
Eremenko-Lyubich model domains  do not differ significantly.
However, Speiser models only allow   disjoint
tracts to be combined in certain ways depending on the 
choice of $\tau_j$ in each tract, whereas 
Eremenko-Lyubich models allow  disjoint tracts to be combined
arbitrarily, and $\tau_j$ can be chosen on each tract 
independently of the choices in other tracts.

Roughly speaking,  each tract of  a Speiser 
class function can ``see''  other nearby tracts in 
the sense that it can be connected to such tracts by 
path families that avoid the singular set and 
 come with nice geometric estimates
(see the proof of Theorem \ref{near constant}).
However, the singularities of an Eremenko-Lyubich function 
can effectively ``block the view'' between different 
tracts. It would be reasonable to expect that if the 
singular set of an Eremenko-Lyubich functions was 
infinite but ``sparse'' in an appropriate sense, then 
nearby tracts would be forced to satisfy compatibility 
conditions similar to Speiser class functions.
What can be said about entire functions where the 
singular set is restricted to lie in a given 
closed set $E$, or satisfies some bound on its 
area, dimension or  capacity, or satisfies some 
other natural geometric restriction?

\bibliography{models}
\bibliographystyle{plain}

\end{document}